\newtheorem{thrm}{Theorem}[section]
\newtheorem{cor}[thrm]{Corollary}
\newtheorem{lemma}[thrm]{Lemma}
\newtheorem{prop}[thrm]{Proposition}
\newtheorem{obs}[thrm]{Observation}
\theoremstyle{definition}
\newtheorem{defin}[thrm]{Definition}
\newtheorem{rem}[thrm]{Remark}
\newtheorem*{xrem}{Remark}
\DeclareMathOperator{\supp}{supp}
\newcommand{\NN}{\mathbb{N}}
\newcommand{\RR}{\mathbb{R}}
\newcommand{\UU}{\mathcal{U}}
\newcommand{\la}{\langle}
\newcommand{\ra}{\rangle}
\begin{document}

\title[Linear homeomorphisms of function spaces]{Linear homeomorphisms of function spaces and the position of a space in its compactification}

\author[M. Krupski]{Miko\l aj Krupski}
\thanks{The author was partially supported by the NCN (National Science Centre, Poland) research Grant no. 2020/37/B/ST1/02613}
\date{}
\address{
Institute of Mathematics\\ University of Warsaw\\ ul. Banacha 2\\
02--097 Warszawa, Poland }
\email{mkrupski@mimuw.edu.pl}

\begin{abstract}
An old question of Arhangel'skii asks if the Menger property of a Tychonoff space $X$ is preserved by homeomorphisms of the space $C_p(X)$ of continuous real-valued functions on $X$ endowed with the pointwise topology.
We provide affirmative answer in the case of linear homeomorphisms. To this end,
we develop a method of studying invariants of linear homeomorphisms of function spaces $C_p(X)$ by looking at the way $X$ is positioned in its
(\v{C}ech-Stone) compactification.  
\end{abstract}

\subjclass[2020]{Primary: 46E10, 54C35, 54D20, 54D40, Secondary:54C60, 54B20, 91A44}

\keywords{Function space, pointwise convergence topology, $C_p(X)$ space, Menger space, Hurewicz space, projective Menger property,
projective Hurewicz property, lower semi-continuous map, $l$-equivalence, linear
homeomorphism, Porada game}

\maketitle

\section{Introduction}
The present paper is concerned mainly with two classical covering-type properties of a topological space $X$, the Menger property and the Hurewicz property,
and their connections with the linear-topological structure of the space $C_p(X)$ of continuous real-valued functions on $X$ equipped with the pointwise topology.
All spaces under consideration are assumed to be Tychonoff.

An old question of Arhangel'skii (cf. \cite[Problem II.2.8]{Arh} or \cite[Problem 4.2.12]{Tkachuk})
asks if the Menger property\footnote{There is inconsistency in the terminology that is used in the literature.
What we (and most of the modern authors) call the Menger property by some authors is called the Hurewicz property.
In this paper by the Hurewicz property we mean something else.} of a space $X$ is preserved by homeomorphisms of its function space $C_p(X)$.
One of the main results of this paper is the following theorem which settles this question in the case of linear homeomorphisms.

\begin{thrm}\label{theorem Menger}
Suppose that $C_p(X)$ and $C_p(Y)$ are linearly homeomorphic. Then $X$ is a Menger space if and only if $Y$ is a Menger space.
\end{thrm}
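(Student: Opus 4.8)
The plan is to exploit the standard duality that a linear homeomorphism $T\colon C_p(X)\to C_p(Y)$ induces between points and finite sets of supports. For each $y\in Y$ the map $f\mapsto (Tf)(y)$ is a continuous linear functional on $C_p(X)$, hence of the form $f\mapsto\sum_{i=1}^{n}\lambda_i f(x_i)$; the finite set of those $x_i$ with $\lambda_i\neq 0$ is the support $\supp(y)\subseteq X$. Applying the same construction to $T^{-1}$ yields a dual support map $\supp\colon X\to[Y]^{<\omega}$, and I would first record the basic features of this correspondence: the support maps are finite-valued, they satisfy a lower semi-continuity condition, and they are \emph{conservative}, in the sense that finitely many points of one space are controlled by finitely many points of the other and conversely. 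I expect these facts to be available from the preceding development of the support apparatus.

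The Menger property is a covering property and cannot be pushed directly through a merely lower semi-continuous, finite-valued correspondence, so the crux is to reformulate it in a shape that survives the transfer. This is where the position of $X$ in $\beta X$ enters. I would isolate two ingredients. First, the \emph{projective} Menger property, which concerns only countable sequences of covers of the separable metrizable continuous images of $X$; since those images are governed by the supports in a controlled, countable way, this weaker property should be seen directly to be an $l$-invariant. Second, I would characterize the genuine Menger property of $X$ by a selection game (a Menger-type, or Porada, game) whose moves are open subsets of $\beta X$ and whose winning condition is read off from their traces on $X$. The role of $\beta X$ is that the support correspondence extends to the compactifications and interacts well with finite unions, so that ``$X$ is Menger'' can be encoded as the conjunction of the $l$-invariant projective property with a game-theoretic condition on the way $X$ sits in $\beta X$ that is manifestly symmetric in $X$ and $Y$.

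The final step is the transfer itself: assuming $X$ is Menger, one takes a winning strategy for the appropriate game on $\beta X$ and pushes it forward along the support maps to a winning strategy for the corresponding game on $\beta Y$, and symmetry of the hypothesis then gives the equivalence. The main obstacle, and the technical heart of the argument, is precisely this transfer. Because $\supp$ is set-valued and only semi-continuous, a single open set upstairs does not pull back to a single open set downstairs, so one must absorb the finite multiplicity of the supports into the finite selections permitted at each round of the game. Finding a reformulation that is simultaneously equivalent to the Menger property and robust under the finite-to-finite, lower semi-continuous support correspondence---so that finite selections are carried to finite selections while covers remain covers---is where the real work lies.
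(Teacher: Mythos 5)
Your general framework---support maps extended to the \v{C}ech--Stone compactifications, lower semi-continuity, and a Porada-type game transfer---is indeed the paper's framework, but your plan has a genuine gap: it never accounts for the Lindel\"of property. The decomposition that actually works (Telg\'arsky's observation, Proposition \ref{proposition proj_Menger} in the paper) is: $X$ is Menger if and only if $X$ is Lindel\"of \emph{and} projectively Menger, and the Lindel\"of half cannot be dropped---the ordinal space $\omega_1$ is projectively Menger (its separable metrizable continuous images are countably compact and metrizable, hence compact) but not Menger. In the paper the Lindel\"of half is disposed of by Velichko's theorem, a deep external result on $l$-invariance of the Lindel\"of property, and the game machinery is used only for the projective half. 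Your proposal instead encodes ``$X$ is Menger'' as the conjunction of projective Menger with a game-theoretic condition on the position of $X$ in $\beta X$ that is ``manifestly symmetric in $X$ and $Y$''; no such manifestly symmetric condition is exhibited, and if one existed it would bypass Velichko's theorem entirely. Relatedly, you have the difficulty inverted: the claim that projective Menger ``should be seen directly to be an $l$-invariant'' is precisely the paper's main technical theorem (Theorem \ref{thrm proj Menger}), requiring the whole apparatus of $s_\varphi$, the sets $\overline{H_n}$, the maps $e_n$, and Bouziad's section theorem; once it is proved, the genuine Menger case follows from Velichko plus the decomposition with no further game argument.

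Second, the transfer you describe in your final paragraph---pushing strategies in the full Menger ($k$-Porada) game directly between $\beta X$ and $\beta Y$ along the support correspondence---runs into a concrete obstruction. To convert the compact sets played by player I on one side into compact sets inside the remainder on the other side, one needs compact \emph{sections} of the lower semi-continuous support maps restricted to those compacta, and the available tool (Theorem \ref{theorem_Bouziad}) requires the target to be a $G_\delta$-subspace of a compact space; in practice this forces the compact sets played to be $G_\delta$. In $kP(\beta Y,\beta Y\setminus Y)$ this fails, since compact subsets of $\beta Y$ need not be $G_\delta$. This is exactly why the paper transfers only the \emph{projective} game: a separable metrizable image $M$ has a metrizable compactification $bM$, where every compact set is $G_\delta$, and on the $\beta X$ side one must use the zero-set ($z$-Porada) game, which is what characterizes projective Menger (Proposition \ref{proposition proj Menger}). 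So the plan as stated cannot be carried out for the genuine Menger game; the detour through Lindel\"of/Velichko plus the projective property is not cosmetic but essential.
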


Let us recall that a topological space $X$ is \textit{Menger} (resp., \textit{Hurewicz})
if for every sequence $(\mathscr{U}_n)_{n\in \NN}$ of open covers of $X$, there is a sequence $(\mathscr{V}_n)_{n\in\NN}$
such that for every $n$, $\mathscr{V}_n$ is a finite subfamily of $\mathscr{U}_n$ and the family $\bigcup_{n\in\NN} \mathscr{V}_n$ covers $X$
(resp., every point of $X$ is contained in $\bigcup\mathscr{V}_n$ for all but finitely many $n$'s).
This classical notions go back to early works of Witold Hurewicz and Karl Menger. Since then they were studied by many authors and found numerous
applications (see \cite{SS} and references
therein).
Clearly, $$\sigma\text{-compact}\Rightarrow\text{Hurewicz}\Rightarrow \text{Menger}\Rightarrow \text{Lindel\"of}.$$
It is known that no implication above is reversible.
There 
has been a lot of work done on
the interplay between the linear topological structure of a function space $C_p(X)$ and topological properties of underlying space $X$;
we refer the interested reader to the monograph \cite{Tkachuk}.
One of the major results in this area of research is the following deep theorem of Velichko \cite{V} (the theorem below was further generalized by Bouziad \cite{B1} to arbitrary Lindel\"of numbers):

\begin{thrm}(Velichko)\label{theorem Velichko}
Suppose that $C_p(X)$ and $C_p(Y)$ are linearly homeomorphic. Then $X$ is Lindel\"of if and only if $Y$ is Lindel\"of.
\end{thrm}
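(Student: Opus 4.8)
The plan is to show that the Lindel\"of property transfers along a linear homeomorphism $T\colon C_p(X)\to C_p(Y)$; since $T^{-1}$ is again a linear homeomorphism, it suffices to prove one implication, say that $X$ Lindel\"of forces $Y$ Lindel\"of. The tool I would build is the \emph{support map}. For each $y\in Y$ the functional $f\mapsto (Tf)(y)$ is a continuous linear functional on $C_p(X)$, hence of the form $f\mapsto\sum_{i=1}^k\lambda_i f(x_i)$ for distinct points $x_i\in X$ and nonzero reals $\lambda_i$; I set $\supp(y)=\{x_1,\dots,x_k\}$. Two facts I would establish first: each $\supp(y)$ is finite and \emph{nonempty} (if it were empty then $Tf(y)=0$ for every $f$, contradicting that $T$ is onto, since some $g\in C_p(Y)$ has $g(y)=1$); and the set-valued map $\supp\colon Y\to [X]^{<\omega}$ is \emph{lower semi-continuous}, i.e.\ $\{y:\supp(y)\cap U\neq\emptyset\}$ is open in $Y$ for every open $U\subseteq X$. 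The latter is where continuity of $T$ is used.

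Next I would stratify the problem. Put $Y_n=\{y\in Y:|\supp(y)|\le n\}$, so that $Y=\bigcup_{n\in\NN}Y_n$. Because a countable union of Lindel\"of subspaces is Lindel\"of, it is enough to prove that each $Y_n$ is Lindel\"of. Given an open cover $\mathscr{W}$ of $Y_n$ (which I may assume consists of traces of open sets of $Y$), the idea is to reflect it down to $X$ through the support map. Lower semi-continuity says that whenever $\supp(y_0)$ meets an open $U\subseteq X$ the same holds for all $y$ near $y_0$, and I would use this to build, from the coordinates of the supports, an open cover of $X$ whose countable subcover (available since $X$ is Lindel\"of) pulls back to a countable subcover of $Y_n$.

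The hard part will be that the Lindel\"of property is \emph{not} finitely productive, so I cannot simply assert that $X^n$ is Lindel\"of and read off a countable subcover there. To get around this I expect to argue by induction on $n$, peeling off one point of the support at a time: on the ``new'' layer $\{y:|\supp(y)|=n\}$ I would use lower semi-continuity to separate the $n$ support points locally into disjoint open sets, splitting $Y_n$ into pieces on which the support behaves like a single-valued (or lower-order) map and the inductive hypothesis applies. An alternative route, which I would keep in reserve, is to introduce the dual support map $t\colon X\to [Y]^{<\omega}$ coming from $T^{-1}$, establish a \emph{coherence} relation between $\supp$ and $t$, and reflect the cover through the graph $L=\{(x,y):x\in\supp(y)\}$, whose projection to $Y$ is a finite-to-one surjection. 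The genuine obstacle in either approach is controlling the fibers $\{y:x\in\supp(y)\}$ over a fixed $x\in X$: these need not be finite, and taming them is exactly where the real content of Velichko's theorem lies.
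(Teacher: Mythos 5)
The first thing to note is that the paper contains no proof of this statement: it is Velichko's theorem, quoted from \cite{V} (Bouziad's generalization to higher Lindel\"of numbers is \cite{B1}) and used as a black box, together with Proposition \ref{proposition proj_Menger} and Theorem \ref{thrm proj Menger}, to derive Theorem \ref{theorem Menger}. So your attempt can only be judged on its own merits. Your opening is correct and standard: continuous linear functionals on $C_p(X)$ are finite linear combinations of evaluations, so $\supp(y)$ is a well-defined finite set, nonempty when $T$ is onto; the map $y\mapsto\supp(y)$ is lower semi-continuous; hence each $Y_n=\{y:|\supp(y)|\le n\}$ is closed, $Y=\bigcup_n Y_n$, and it suffices to show each $Y_n$ is Lindel\"of. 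This is exactly the machinery the paper itself rebuilds in Section 3 (for its own, different theorems).

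Everything after that, however, is a plan rather than a proof, and the plan as stated does not close; you concede this yourself when you write that taming the fibers $\{y:x\in\supp(y)\}$ ``is exactly where the real content of Velichko's theorem lies.'' The concrete obstruction is that the support map has the wrong kind of semicontinuity for the direction of transfer you need. What transfers covering properties is \emph{upper} semi-continuity: if $\phi$ is u.s.c.\ with compact values, then $\phi$-images of Lindel\"of spaces are Lindel\"of, and perfect preimages of Lindel\"of spaces are Lindel\"of. But $\supp$ maps $Y$ into $[X]^{<\omega}$, so it cannot push Lindel\"ofness of $X$ forward at all; while the dual map $\supp_{T^{-1}}\colon X\to[Y]^{<\omega}$, which does cover $Y$ (an easy consequence of linear independence of evaluation functionals, via the coherence $y\in\supp_{T^{-1}}(x)$ for some $x\in\supp(y)$), is only \emph{lower} semi-continuous, and l.s.c.\ finite-valued images of Lindel\"of spaces need not be Lindel\"of. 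In your graph formulation, you would be done if $\pi_X\colon L\to X$ were perfect (then $L$ would be Lindel\"of and $Y=\pi_Y(L)$ its continuous image), but its fibers are precisely the sets $\{y:x\in\supp(y)\}$, which need not be compact, and $\pi_X$ need not be closed. The inductive ``peel off one support point'' idea hits the same wall: on the layer $\{y:|\supp(y)|=n\}$ the support map even becomes Vietoris-continuous into $[X]^n$ and locally splits into $n$ continuous selections $y\mapsto x_i(y)$, and coherence makes these selections finite-to-one on suitable pieces, but their images are arbitrary subsets of $X$, and Lindel\"ofness does not pass to subsets, nor are these selections closed maps. Overcoming exactly this is the content of Velichko's (and Bouziad's) argument, and nothing in your outline substitutes for it; what you have is the standard first page of such a proof plus an accurate diagnosis of where the difficulty sits.
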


%Let us mention here a few results in this area that are relevant to our work.
At the other extreme,
it is relatively easy to show that $\sigma$-compactness of $X$ is determined by the linear-topological structure of the function space $C_p(X)$; see, e.g. \cite[6.9.1]{vM}
(actually more is true: $\sigma$-compactness of $X$ can be characterized by a certain topological property of $C_p(X)$; see \cite{Okunev} or
\cite[\S\; III.2]{Arh}). For the Hurewicz property the following theorem was proved by Zdomskyy \cite[Corollary 7]{Zd}:
\begin{thrm}(Zdomskyy)\label{theorem Hurewicz}
Suppose that $C_p(X)$ and $C_p(Y)$ are linearly homeomorphic. Then $X$ is a Hurewicz space if and only if $Y$ is a Hurewicz space.
\end{thrm}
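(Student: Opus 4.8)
The plan is to deduce the statement from two facts. The first is that, for Lindel\"of spaces, the Hurewicz property coincides with its \emph{projective} version, i.e. with the requirement that every continuous image in a second-countable space be Hurewicz. The second is that this projective version is encoded in a boundedness property of the \emph{additive topological group} $(C_p(X),+)$, and such group-theoretic properties are automatically preserved by a linear homeomorphism, since a linear homeomorphism is in particular an isomorphism of topological groups.

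First I would reduce to Lindel\"of spaces. Every Hurewicz space is Lindel\"of, so if $X$ is Hurewicz then $X$, and hence by Theorem~\ref{theorem Velichko} also $Y$, is Lindel\"of; by the symmetry of the hypothesis it then suffices to prove the implication ``$X$ Hurewicz $\Rightarrow$ $Y$ Hurewicz'' under the blanket assumption that both $X$ and $Y$ are Lindel\"of.

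Next I would establish the reduction lemma: a Lindel\"of space $Z$ is Hurewicz if and only if every continuous image of $Z$ in a second-countable space is Hurewicz. The forward direction is immediate, as continuous images of Hurewicz spaces are Hurewicz. For the converse, given a sequence $(\mathscr{U}_n)_{n\in\NN}$ of open covers of $Z$, I would use Lindel\"ofness and the Tychonoff property to refine each $\mathscr{U}_n$ by a countable cover $\mathscr{W}_n=\{C_{n,k}:k\in\NN\}$ of cozero sets, each $C_{n,k}$ sitting inside some member of $\mathscr{U}_n$. The countably many cozero sets $C_{n,k}$ are coded by a single continuous map $\phi\colon Z\to\RR^{\NN}$ onto a second-countable image $Z'=\phi(Z)$, so that $C_{n,k}=\phi^{-1}(W_{n,k})$ for open $W_{n,k}\subseteq Z'$ and, for each $n$, the family $\{W_{n,k}:k\}$ covers $Z'$. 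Applying the Hurewicz property of $Z'$ to these covers, pulling the resulting selection back along $\phi$, and enlarging each chosen $C_{n,k}$ to a containing member of $\mathscr{U}_n$, produces the required Hurewicz selection for $(\mathscr{U}_n)_{n\in\NN}$.

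The heart of the argument, and the step I expect to be the main obstacle, is the equivalence between the projective Hurewicz property of $X$ and a boundedness property of $C_p(X)$: namely, that $X$ is projectively Hurewicz if and only if the group $C_p(X)$ is \emph{Hurewicz-bounded}, in the sense that for every sequence $(U_n)_{n\in\NN}$ of neighbourhoods of the zero function there are finite sets $F_n\subseteq C_p(X)$ with the property that every $f\in C_p(X)$ lies in $F_n+U_n$ for all but finitely many $n$. Granting this characterization, the theorem closes quickly: a linear homeomorphism $T\colon C_p(X)\to C_p(Y)$ carries the zero function to the zero function, neighbourhoods of zero to neighbourhoods of zero, and translates to translates, so $C_p(X)$ is Hurewicz-bounded exactly when $C_p(Y)$ is, and the chain $X$ Hurewicz $\Rightarrow$ $X$ projectively Hurewicz $\Rightarrow$ $C_p(X)$ Hurewicz-bounded $\Rightarrow$ $C_p(Y)$ Hurewicz-bounded $\Rightarrow$ $Y$ projectively Hurewicz $\Rightarrow$ $Y$ Hurewicz completes the proof. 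The real work is thus concentrated in proving that equivalence, which is the one place where the ``all but finitely many'' clause peculiar to the Hurewicz property is exploited; it is precisely the analogue of this equivalence that is unavailable (or only conditionally available) for the Menger property, and this is what forces the Menger case to be handled by the finer analysis, via the position of $X$ in its compactification, developed elsewhere in the paper.
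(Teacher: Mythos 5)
Your two outer reductions --- Velichko's theorem to make both spaces Lindel\"of, and the fact that a Lindel\"of projectively Hurewicz space is Hurewicz --- are correct, and they are exactly the skeleton of the paper's own derivation of this theorem (the paper obtains it by combining Theorem~\ref{thrm proj Hurewicz}, Theorem~\ref{theorem Velichko}, and the decomposition Hurewicz $=$ Lindel\"of $+$ projectively Hurewicz). The gap is the step you yourself call the heart of the argument: the claimed equivalence ``$X$ is projectively Hurewicz iff $C_p(X)$ is Hurewicz-bounded'' is false, and it fails in the direction you need. Take $X=\NN$ (countable discrete). Then $X$ is $\sigma$-compact, hence Hurewicz and projectively Hurewicz, but $C_p(\NN)=\RR^{\NN}$ is not Hurewicz-bounded --- not even Menger-bounded (where one only asks that each $f$ lie in $F_n+U_n$ for \emph{some} $n$). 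Indeed, let $U_n=\{f\in\RR^{\NN}:|f(n)|<1\}$; given any finite sets $F_n\subseteq\RR^{\NN}$, the function $h$ defined by $h(n)=2+\max\{g(n):g\in F_n\}$ lies outside $F_n+U_n$ for \emph{every} $n$. Finitely many translates of a basic neighbourhood of $\underline{0}$ can only capture functions whose values on the supporting finite set stay in a bounded range, and nothing in a covering property of $X$ bounds the values of elements of $C_p(X)$.

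In fact, Hurewicz-boundedness (equivalently, Menger-boundedness) of $C_p(X)$ characterizes pseudocompactness of $X$, not any projective selection principle: if $X$ is pseudocompact, then $C_p(X)=\bigcup_{n}\{f:|f|\leq n\}$ is an increasing union of totally bounded sets, which yields Hurewicz-boundedness; if $X$ is not pseudocompact, it contains a $C$-embedded copy of $\NN$, so restriction is a continuous surjective homomorphism of $C_p(X)$ onto $\RR^{\NN}$, to which such boundedness properties would pass. Consequently no translate-type invariant of the topological group $(C_p(X),+)$ can encode the projective Hurewicz property, and this is precisely why the paper does not argue through a group invariant. Instead it proves Theorem~\ref{thrm proj Hurewicz} by transferring witnesses through the support maps $s_\varphi$ and $s_{\varphi^{-1}}$ extended over the \v{C}ech--Stone compactifications, using their lower semicontinuity, Bouziad's compact-section theorem (Theorem~\ref{theorem_Bouziad}), and the characterization of projective Hurewicz by the position of $X$ in $\beta X$ (Proposition~\ref{proposition proj Hurewicz}); that analysis is what must replace your postulated equivalence. (A side remark: your closing comment inverts the history of the Menger case --- Zdomskyy's conditional Menger result comes from the fact that Menger coincides with Hurewicz under $\mathfrak{u}<\mathfrak{g}$, not from a boundedness characterization available for Hurewicz but not for Menger; neither property admits one, for the reason above.)
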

Regarding the Menger property, analogous assertion is provided by our Theorem \ref{theorem Menger}. Though, some partial results were known before.
In \cite{Zd}, Zdomskyy showed that in the linear case, the answer to Arhangel'skii's question mentioned above,
is affirmative under an additional set-theoretic assumption $\mathfrak{u}<\mathfrak{g}$ (see \cite[Corollary 7]{Zd}).
More recently, Sakai \cite{Sakai} gave a partial solution in \textsf{ZFC}\footnote{The abbreviation \textsf{ZFC} stands for "Zermelo–Fraenkel set theory with the axiom of choice"} (see \cite[Theorem 2.5]{Sakai}).

In the proof of Theorem \ref{theorem Menger}, Velichko's Theorem \ref{theorem Velichko} plays an important role.
This is because of the following observation essentially due to Telg\'{a}rsky (see \cite[Proposition 2]{T}, cf. \cite[Proposition 8]{BCM}):

\begin{prop}\label{proposition proj_Menger}
A space $X$ is Menger if and only if $X$ is Lindel\"of and every separable metrizable continuous image of $X$ is Menger.
\end{prop}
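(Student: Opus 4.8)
The plan is to prove the two implications separately; the forward direction is essentially formal, while the reverse direction carries all the content.

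For the ``only if'' part, I would first observe that the Menger property implies the Lindel\"of property by feeding the selection principle a constant sequence $\mathscr{U}_n=\mathscr{U}$: the resulting finite subfamilies $\mathscr{V}_n$ assemble into a countable subcover of $\mathscr{U}$. Next I would check that the Menger property is preserved by continuous surjections $f\colon X\to Z$: given a sequence of open covers $(\mathscr{W}_n)$ of $Z$, pull them back to open covers $(f^{-1}\mathscr{W}_n)$ of $X$, apply the Menger property of $X$, and push the chosen finite subfamilies forward; surjectivity guarantees that the images still cover $Z$. In particular every separable metrizable continuous image of a Menger space is Menger, which gives the forward implication.

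The substance lies in the ``if'' part. Assume $X$ is Lindel\"of and that every separable metrizable continuous image of $X$ is Menger, and let $(\mathscr{U}_n)_{n\in\NN}$ be a sequence of open covers. Using the Lindel\"of property I would first replace each $\mathscr{U}_n$ by a countable subcover, and then, using complete regularity, refine it to a countable cover by cozero sets $U_{n,k}=g_{n,k}^{-1}\big((0,\infty)\big)$ with $g_{n,k}\colon X\to[0,1]$ continuous; since a Menger selection from a cozero refinement yields one from the original cover, this reduction is harmless. The key step is then to diagonalize: form the single continuous map $\varphi=\Delta_{n,k}\,g_{n,k}\colon X\to[0,1]^{\NN\times\NN}$ and set $Z=\varphi(X)$, a separable metrizable continuous image of $X$. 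By hypothesis $Z$ is Menger.

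Finally I would transport the selection back to $X$. Each cozero set satisfies $U_{n,k}=\varphi^{-1}(W_{n,k})$, where $W_{n,k}\subseteq Z$ is the open set cut out by positivity of the $(n,k)$-th coordinate, and $\{W_{n,k}:k\in\NN\}$ covers $Z$ for every $n$. Applying the Menger property of $Z$ to the sequence $(\{W_{n,k}:k\in\NN\})_{n\in\NN}$ yields finite sets $F_n\subseteq\NN$ with $\bigcup_n\bigcup_{k\in F_n}W_{n,k}=Z$; taking preimages and then enlarging each selected cozero set to a member of $\mathscr{U}_n$ containing it produces the required finite subfamilies $\mathscr{V}_n\subseteq\mathscr{U}_n$ whose union covers $X$. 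The only genuinely delicate point I anticipate is the setup: arranging the reduction to countable cozero covers and verifying that the diagonal map lets the entire sequence of covers factor through one second-countable image. Once that is in place, the transfer of selections is automatic.
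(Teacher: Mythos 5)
Your proof is correct and complete: the forward direction (Menger implies Lindel\"of and is preserved by continuous surjections) and the reverse direction via a countable cozero refinement (using Lindel\"of together with complete regularity) followed by the diagonal map into a separable metrizable image is exactly the standard argument. The paper offers no proof of this proposition, only citations to Telg\'arsky and to Bonanzinga--Cammaroto--Matveev, and your argument is essentially the one found in those references, so there is nothing to correct.
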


Analogous fact is also true for the Hurewicz property (see \cite[Theorem 3.2]{Ko} or \cite[Proposition 31]{BCM})
A space $X$ is called \textit{projectively Menger} (resp., \textit{projectively Hurewicz})
provided every separable metrizable continuous image of $X$ is Menger (resp., Hurewicz).
According to Theorem \ref{theorem Velichko} and Proposition \ref{proposition proj_Menger}, Theorem \ref{theorem Menger}
reduces to the following result which we prove in this paper:
\begin{thrm}\label{thrm proj Menger}
Suppose that $C_p(X)$ and $C_p(Y)$ are linearly homeomorphic. Then $X$ is projectively Menger if and only if $Y$ is projectively Menger.
\end{thrm}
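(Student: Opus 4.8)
The plan is to prove the nontrivial implication (projective Menger-ness of $X$ implies that of $Y$); the converse follows by applying the same argument to the inverse linear homeomorphism. Fix a linear homeomorphism $T\colon C_p(X)\to C_p(Y)$. For $y\in Y$ the functional $f\mapsto (Tf)(y)$ is continuous and linear, so there are a finite set $\supp(y)\subseteq X$ and nonzero reals $(a_x(y))_{x\in\supp(y)}$ with $(Tf)(y)=\sum_{x\in\supp(y)}a_x(y)f(x)$; define the dual support $\supp^{*}\colon X\to\Fin(Y)$ from $T^{-1}$ analogously. I would first record the standard facts about this apparatus: the supports are finite, the set-valued map $y\mapsto\supp(y)$ is lower semicontinuous (if $x\in\supp(y_0)$ then every neighbourhood of $x$ meets $\supp(y)$ for all $y$ near $y_0$, as one sees by testing against a function concentrated near $x$), whence the support size is lower semicontinuous and each $\{y:\ |\supp(y)|\le k\}$ is closed, and $\supp,\supp^{*}$ are linked by the usual duality. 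Passing to $\beta X$, closures of supports yield a compact-valued map recording how the points of $Y$ are attached to the position of $X$ inside $\beta X$.

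The conceptual core is to express projective Menger-ness as a property of the position of $X$ in $\beta X$. A separable metrizable continuous image of $X$ is the same as the image $q(X)$ of $X$ under a continuous map $q$ of $\beta X$ onto a compact metrizable space $K$ (extend a map into the Hilbert cube over $\beta X$), and, by the classical Hurewicz--Menger theorems, such an image is Menger precisely when every continuous image of it in the Baire space $\omega^{\omega}$ is non-dominating for $\leq^{*}$. Thus $X$ is projectively Menger iff for every such quotient $q\colon\beta X\to K$ the subset $q(X)$ is Menger in $K$, a property manifestly belonging to the pair $(\beta X,X)$. I would reformulate this Menger selection as the statement that a designated player has a winning strategy in the Porada game associated with the pair $(K,q(X))$, uniformly over the compact metrizable quotients $K$ of $\beta X$ --- the game being the device that lets one combine the finitely many choices made at successive stages.

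With these reductions in hand, the transfer proceeds as follows. Given a continuous map of $Y$ onto a compact metrizable $K'$, realized by functions $(g_n)_n\subseteq C_p(Y)$, put $f_n=T^{-1}g_n\in C_p(X)$; then $g_n(y)=\sum_{x\in\supp(y)}a_x(y)f_n(x)$, so the image of $Y$ is controlled by the values of the $f_n$ on the finite support sets together with the coefficients. I would stratify $Y=\bigcup_{k}Y_k$ with $Y_k=\{y:\ |\supp(y)|\le k\ \text{and}\ \sum_x|a_x(y)|\le k\}$; semicontinuity of supports and coefficients makes each $Y_k$ closed, and on $Y_k$ the support correspondence is finite-to-finite with a uniform bound. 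Using the lower semicontinuity of $\supp$ and the winning strategy for $X$ in the Porada game on the relevant compact quotient of $\beta X$, I would transport that strategy through the support correspondence into a winning strategy for the game on $K'$, witnessing that the image of $Y$ in $K'$ is Menger. Crucially, the game is played so that at each stage only finitely many support points are consulted, keeping every object a continuous image of $X$ itself rather than of a finite power --- this is essential because the Menger property is neither productive nor countably additive.

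The main obstacle is precisely this last point. Since supports are multivalued and of unbounded (finite) cardinality, there is no continuous single-valued map $Y\to X$ (nor $Y\to\RR^{\omega}$ factoring through $X$) along which to push the data, and since projective Menger-ness is not preserved by countable unions one cannot treat the strata $Y_k$ separately and recombine. The role of the $\beta X$-picture and of the Porada game is exactly to package the Menger selection for $X$ into a single strategy that survives the passage through the lower semicontinuous support correspondence and accommodates support points that accumulate in the remainder $\beta X\setminus X$; making this transport of strategies precise, with the requisite control over the remainder, is where the real work will lie.
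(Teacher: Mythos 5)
Your plan has a genuine conceptual error at its core, in the game-theoretic step that is supposed to carry the whole transfer. The Menger property (and, via the paper's Proposition on the $z$-Porada game, the projective Menger property) is characterized by player I having \emph{no} winning strategy in the relevant Porada game --- not by "a designated player having a winning strategy". Player II having a winning strategy is a strictly stronger condition (for nice spaces it is essentially $\sigma$-compactness), so for a Menger non-$\sigma$-compact $X$ there is simply no "winning strategy for $X$" to transport. This reverses the architecture of the proof: you cannot push a witness of Menger-ness forward from the $X$-side to the $Y$-side, because no such witness exists. The correct direction is the opposite one: assume a strategy $\sigma$ for player I in the game $kP(bM,bM\setminus M)$ on the $Y$-side quotient, use it to build a strategy $\tau$ for player I in the game $zP(\beta X,\beta X\setminus X)$, invoke projective Menger-ness of $X$ to produce a play in which $\tau$ loses, and then translate that losing play back into a play defeating $\sigma$. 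Your proposal transports strategies from the $X$-side to the $Y$-side, which cannot implement the "no winning strategy" characterization; non-existence of strategies is not an object one can push forward.

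Separately, even with the direction fixed, the proposal lacks the mechanism that makes the transport possible, and this is not a routine detail. To build $\tau$, player I must answer each compact set $K\subseteq bM\setminus M$ played by $\sigma$ with a single compact zero-set $L\subseteq\beta X\setminus X$ that meets $s_\varphi(y)$ for \emph{every} $y\in\widetilde{h}^{-1}(K)\cap\overline{H_n}$, where $\overline{H_n}$ is a closure taken in $\beta Y$. This forces you to (i) define supports at points of $\beta Y\setminus Y$ intrinsically (the paper's $s_\varphi$, via $y$-effective sets --- "closures of supports" does not give this, since for $y\in Y$ the support is finite and closed already, and the issue is the domain, not the values), (ii) prove lower semicontinuity of $s_\varphi$ on $\beta Y$ and the duality $y\in s_{\varphi^{-1}}(s_\varphi(y))$ on the relevant closed sets, (iii) introduce the metric-separation strata $E_n,F_n$ so that the auxiliary map $e_n(y)=\{x\in s_\varphi(y):\widetilde{h}(y)\in\widetilde{h}(s_{\varphi^{-1}}(x))\}$ is lower semicontinuous, and (iv) apply Bouziad's compact-section theorem for lower semicontinuous compact-valued maps into a $G_\delta$ subspace to extract the compact set $L$. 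Your text gestures at "support points that accumulate in the remainder" and concedes that the "real work" lies there, but without the section theorem (or a substitute) the strategy $\tau$ has no legal moves, so the proposal as written does not contain a proof.
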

We also prove a similar theorem for the projective Hurewicz property:

\begin{thrm}\label{thrm proj Hurewicz}
Suppose that $C_p(X)$ and $C_p(Y)$ are linearly homeomorphic. Then $X$ is projectively Hurewicz if and only if $Y$ is projectively Hurewicz.
\end{thrm}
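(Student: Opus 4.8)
The plan is to reduce the statement to the combinatorial characterization of the projective Hurewicz property in the Baire space $\NN^\NN$ and then to exploit the \emph{linearity} of the homeomorphism through the finite support representation of its dual. Recall (see \cite{BCM}, \cite{Ko}) that a Tychonoff space $Z$ is projectively Hurewicz if and only if every continuous image of $Z$ in $\NN^\NN$ is bounded in the eventual-domination order $\leq^*$; equivalently, $Z$ satisfies the Hurewicz selection principle for countable cozero (functionally open) covers. By symmetry it suffices to prove one implication, so I would assume $X$ is projectively Hurewicz, fix a linear homeomorphism $T\colon C_p(X)\to C_p(Y)$, and fix an arbitrary continuous map $G=(g_k)_{k\in\NN}\colon Y\to\NN^\NN$, and then show that $G(Y)$ is $\leq^*$-bounded.

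First I would record the support representation. For each $y\in Y$ the functional $f\mapsto T(f)(y)$ is a continuous linear functional on $C_p(X)$, hence of the form $T(f)(y)=\sum_{x\in\supp(y)}\lambda_{x,y}\,f(x)$ for a finite set $\supp(y)\subseteq X$ and nonzero reals $\lambda_{x,y}$ that do not depend on $f$. Writing $f_k:=T^{-1}(g_k)\in C(X)$ and applying this to $f=f_k$ gives the crucial identity
\[
g_k(y)=\sum_{x\in\supp(y)}\lambda_{x,y}\,f_k(x)\qquad(y\in Y,\ k\in\NN),
\]
in which the finite set $\supp(y)$ and the coefficients $\lambda_{x,y}$ are the \emph{same} for every coordinate $k$. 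This $k$-independence is exactly the feature that linearity buys us and that a general homeomorphism would not provide.

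Next I would extract a single majorant from the hypothesis on $X$. Applying the cozero Hurewicz selection principle to the increasing countable cozero covers $\mathscr U_n=\{\{x:|f_n(x)|<m\}:m\in\NN\}$ of $X$ yields $h_0\in\NN^\NN$ such that for every $x\in X$ one has $|f_k(x)|\le h_0(k)$ for all but finitely many $k$. Now filter $Y$ by $Y_j=\{y:|\supp(y)|\le j\ \text{and}\ \sum_{x\in\supp(y)}|\lambda_{x,y}|\le j\}$, so that $Y=\bigcup_{j}Y_j$. Fixing $y\in Y_j$ and using that $\supp(y)$ is finite, there is $K$ with $|f_k(x)|\le h_0(k)$ for all $x\in\supp(y)$ and all $k\ge K$, whence the triangle inequality gives
\[
g_k(y)=\Big|\sum_{x\in\supp(y)}\lambda_{x,y}f_k(x)\Big|\le\Big(\sum_{x\in\supp(y)}|\lambda_{x,y}|\Big)h_0(k)\le j\,h_0(k)\le k\,h_0(k)
\]
for all $k\ge\max(K,j)$. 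Hence $g_k(y)\le h(k)$ eventually, where $h(k):=k\,h_0(k)$; as $y\in Y_j$ was arbitrary and every point of $Y$ lies in some $Y_j$, we get $G(y)\leq^* h$ for all $y\in Y$, i.e. $G(Y)$ is bounded. Since $G$ was arbitrary, $Y$ is projectively Hurewicz.

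The delicate point, and the place I would spend the most care, is precisely the passage from the pointwise, piecewise bounds to a \emph{single} $h$ valid on all of $Y$: the support sizes and coefficient sums are unbounded over $Y$, so $Y$ is only a countable union of ``boundedly supported'' pieces, and a countable union of $\leq^*$-bounded sets need not be bounded. What rescues the argument is that for a \emph{fixed} $y$ the controlling index $j$ is fixed while $k\to\infty$, so the growth factor $k$ in $h(k)=k\,h_0(k)$ absorbs the coefficient sum; this is where the Hurewicz (boundedness) formulation is genuinely friendlier than the Menger one, since the triangle inequality converts ``all support points eventually small'' into ``the sum eventually small''. By contrast, the Menger version of this scheme, needed for Theorem \ref{thrm proj Menger}, must control non-domination through the same linear combination, where a single small coordinate of one summand no longer forces the whole sum to be small; that is the real obstacle there and presumably the reason the Menger case requires the finer, compactification-based analysis advertised in the introduction. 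Finally, exchanging the roles of $X$ and $Y$ via $T^{-1}$ gives the converse implication and completes the proof.
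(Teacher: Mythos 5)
Your reduction rests on the claim that a Tychonoff space $Z$ is projectively Hurewicz if and only if every continuous image of $Z$ in $\NN^\NN$ is $\leq^*$-bounded, and the direction you actually need on the $Y$ side --- bounded images imply projectively Hurewicz --- is false. Continuous maps into the zero-dimensional space $\NN^\NN$ collapse connected spaces to points: every continuous map $\RR^\omega\to\NN^\NN$ is constant, so all $\NN^\NN$-images of $\RR^\omega$ are singletons, hence bounded; yet $\RR^\omega$ is a separable metrizable space that is not Hurewicz (it contains a closed copy of the dominating space $\NN^\NN$), so it is not projectively Hurewicz. For the same reason your parenthetical ``equivalently'' is wrong: the countable cozero-cover selection principle (which is a correct characterization, and which you correctly use on the $X$ side) is strictly stronger than boundedness of integer-valued images. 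As written, your argument establishes only that every $\NN^\NN$-valued image of $Y$ is bounded, which says nothing about, e.g., connected metrizable images of $Y$; this is a genuine gap at the final step.

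The gap is reparable, and the repaired argument is a genuinely different and much more elementary route than the paper's. Your support computation never uses that the $g_k$ are integer-valued, so it shows verbatim: for every continuous $G=(g_k)\colon Y\to\RR^\omega$ there is a single $h$ with $|g_k(y)|\le h(k)$ for all but finitely many $k$, for every $y\in Y$ (the absorption of the coefficient sum $\sum_{x\in\supp(y)}|\lambda_{x,y}|\le j$ by the factor $k$ in $h(k)=k\,h_0(k)$ is correct and is the genuinely nice point of your proof). Moreover, boundedness of all $\RR^\omega$-valued images \emph{does} characterize projective Hurewicz: given a continuous surjection $q\colon Y\to M$ onto a separable metrizable $M$ and countable increasing open covers $\{U_{n,m}\}_{m}$ of $M$, the upper semicontinuous functions $\psi_n=\min\{m:\cdot\in U_{n,m}\}$ are dominated by continuous real-valued functions $F_n$ (via a partition of unity subordinated to $\{U_{n,m}\}_m$, using paracompactness of the metrizable space $M$ --- this is exactly where integer values must be abandoned), and $\leq^*$-boundedness of the image of $(F_n\circ q)_n$ hands back the required finite selections. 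With this fix your proof is correct and bypasses the paper's machinery entirely: no extension $s_\varphi$ of the support map to $\beta Y$, no lower semicontinuity analysis, no Bouziad compact-section theorem, no zero-set characterization of projective Hurewicz in $\beta X$. The comparison cuts the other way for Theorem \ref{thrm proj Menger}: as you yourself observe, no single-sequence boundedness characterization is available for the (projective) Menger property, and that is precisely where the paper's compactification and game-theoretic apparatus --- which treats the Hurewicz and Menger cases uniformly --- earns its keep.
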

From Theorem \ref{thrm proj Hurewicz} we immediately get Zdomskyy's Theorem \ref{theorem Hurewicz} as a corollary.
The above results answer questions asked in \cite{Sakai} by Sakai.

Our approach relies on the fact that that
the (projective) properties of Menger and Hurewicz of a space $X$ can be conveniently expressed in terms of the \v{C}ech-Stone compactification $\beta X$ of $X$. We develop a method of studying invariants of linear homeomorphisms of function spaces $C_p(X)$ by looking at the way $X$ is positioned in its
\v{C}ech-Stone compactification.

\section{Notation and auxiliary results}
In this section we collect some notation and auxiliary results that we shall use throughout the paper.

\subsection{Hyperspaces and set-valued maps}

For a topological space $X$, by $\mathcal{K}(X)$ we denote the set of all nonempty compact subsets of $X$. We endow $\mathcal{K}(X)$ with the \textit{Vietoris topology},
i.e., the topology generated
by basic open sets of the form:
$$\la \UU\ra=\{K\in \mathcal{K}(X):\forall U\in \UU\quad K\cap U\neq \emptyset\; \text{and}\;K\subseteq \bigcup\UU\},$$
where $\UU=\{U_1,\ldots U_n\}$ is a finite collection of open subsets of $X$.

For an integer $n\geq 1$ we put $[X]^{\leq n}=\{K\in \mathcal{K}(X): |K|\leq n\}$ and
$[X]^n=[X]^{\leq n}\setminus [X]^{\leq n-1}$, i.e., $[X]^{\leq n}$ ($[X]^n$) is the subspace of $\mathcal{K}(X)$ consisting of
all at most (precisely) $n$-element subsets of $X$.

A set-valued map $\phi:X\to \mathcal{K}(Y)$ is \textit{lower semi-continuous} if the set $$\phi^{-1}(U)=\{x\in X:\phi(x)\cap U\neq \emptyset\}$$
is open, for every open $U\subseteq Y$.

Let us note the following simple fact.

\begin{lemma}\label{G_delta via lsc map}
For a space $X$ and a compact space $Z$, let
$\phi:X\to \mathcal{K}(Z)$ be a lower semi-continuous map.
If $K$ is a compact $G_\delta$-subset of $Z$, then the set $\phi^{-1}(K)=\{x\in X:\phi(x)\cap K\neq\emptyset\}$
is $G_\delta$ in $X$.
\end{lemma}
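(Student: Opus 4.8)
The plan is to realize $\phi^{-1}(K)$ as a countable intersection of open sets, using the lower semi-continuity of $\phi$ together with the compactness of each value $\phi(x)$.

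First I would fix a convenient presentation of $K$ as a $G_\delta$. Since $Z$ is compact and Hausdorff it is normal, so starting from any presentation $K=\bigcap_n V_n$ with $V_n$ open (and, after replacing $V_n$ by $V_1\cap\dots\cap V_n$, decreasing) I can recursively shrink the $V_n$ to obtain open sets $U_n\subseteq Z$ with
\[
K\subseteq U_{n+1}\subseteq \overline{U_{n+1}}\subseteq U_n\cap V_{n+1},\qquad K=\bigcap_{n} U_n,
\]
where at each step one separates the compact set $K$ from the closed complement of $U_n\cap V_{n+1}$. The point of requiring $\overline{U_{n+1}}\subseteq U_n$ is that it forces $\bigcap_n\overline{U_n}=K$ as well: any $z\in\bigcap_n\overline{U_n}$ lies in $\overline{U_{n+1}}\subseteq U_n$ for every $n$, hence in $\bigcap_n U_n=K$.

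Next I would set $A_n=\phi^{-1}(U_n)=\{x:\phi(x)\cap U_n\neq\emptyset\}$, which is open for each $n$ by lower semi-continuity, and claim that $\phi^{-1}(K)=\bigcap_{n} A_n$. The inclusion $\subseteq$ is immediate, since $K\subseteq U_n$ for all $n$. For the reverse inclusion, fix $x$ with $\phi(x)\cap U_n\neq\emptyset$ for every $n$. This is the step I expect to be the crux: one cannot simply intersect the sets $\phi(x)\cap U_n$, because a decreasing family of (relatively) open sets may have empty intersection. Instead I would pass to closures and consider the sets $\phi(x)\cap\overline{U_n}$. These are nonempty (as $U_n\subseteq\overline{U_n}$), closed in the compact space $\phi(x)$, and decreasing (because $\overline{U_{n+1}}\subseteq U_n\subseteq\overline{U_n}$). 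By the finite intersection property of the compact set $\phi(x)$ one gets
\[
\emptyset\neq\bigcap_{n}\bigl(\phi(x)\cap\overline{U_n}\bigr)=\phi(x)\cap\bigcap_{n}\overline{U_n}=\phi(x)\cap K,
\]
using $\bigcap_n\overline{U_n}=K$ from the first step. Hence $x\in\phi^{-1}(K)$, which proves the claim.

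Finally, since each $A_n$ is open, the equality $\phi^{-1}(K)=\bigcap_n A_n$ exhibits $\phi^{-1}(K)$ as a $G_\delta$-subset of $X$, as required. The only genuinely delicate point is the one flagged above: the naive argument through the open sets $U_n$ fails, and it is precisely the compactness of the values $\phi(x)$ — combined with the shrinking that makes $\bigcap_n\overline{U_n}$ collapse exactly onto $K$ — that lets the finite intersection property deliver a point of $\phi(x)$ lying in $K$.
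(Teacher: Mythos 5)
Your proof is correct and follows essentially the same route as the paper: both reduce to the identity $\phi^{-1}(K)=\bigcap_n\phi^{-1}(U_n)$ for a shrinking sequence of open sets with $\overline{U_{n+1}}\subseteq U_n$ and $\bigcap_n \overline{U_n}=K$, and both close the key gap via the finite intersection property applied to the decreasing family $\phi(x)\cap\overline{U_n}$. The only difference is that you spell out the normality argument producing the shrinking sequence, which the paper simply asserts.
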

\begin{proof}
The set $K$ is compact $G_\delta$, so there is a sequence $U_1, U_2, \ldots$ of open subsets of $Z$ such that
$U_{n+1}\subseteq\overline{U_{n+1}}\subseteq U_n$, for every $n\geq 1$, and
$K=\bigcap_{n=1}^\infty U_n=\bigcap_{n=1}^\infty \overline{U_n}$.
Since the map $\phi$ is lower semi-continuous, it suffices to
check that
$$\phi^{-1}(K)=\bigcap_{n=1}^\infty \phi^{-1}(U_n).$$
Pick $x\in \bigcap_{n=1}^\infty \phi^{-1}(U_n)$, i.e., for every $n\geq 1$, we have $\phi(x)\cap U_n\neq \emptyset$.
The map $\phi$ has compact values and $Z$ is compact. Hence, the intersection of the (decreasing) family $\{\phi(x)\cap \overline{U_n}:n=1,2,\ldots\}$
of nonempty closed subsets of $Z$
must be nonempty, i.e., we have, $\phi(x)\cap\bigcap_{n=1}^\infty \overline{U_n}=\phi(x)\cap K\neq \emptyset$.
This gives $x\in \phi^{-1}(K)$. The converse inclusion is obvious.
\end{proof}

If $\phi:X\to \mathcal{K}(Y)$ is a set-valued map and $A\subseteq X$, then we define the image $\phi(A)$ of $A$ under $\phi$ as $$\phi(A)=\bigcup\{\phi(x):x\in A\}.$$ 

A subset $S$ of $Y$ that meets all values of $\phi:X\to \mathcal{K}(Y)$ is called a \textit{section} of $\phi$.
The following theorem can be attributed to Bouziad (cf. \cite[Theorem 6]{B}):
\begin{thrm}\label{theorem_Bouziad}
Suppose that $X$ is a $G_\delta$ subspace of a compact space. If $C$ is compact, then every lower semi-continuous function $\phi:C\to \mathcal{K}(X)$ admits a compact section. 
\end{thrm}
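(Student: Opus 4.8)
The plan is to realise the section as the intersection of a decreasing sequence of compact subsets of the ambient compact space. Fix a compact space $Z$ containing $X$ as a $G_\delta$ subspace and write $X=\bigcap_{n\geq 1}G_n$, where each $G_n$ is open in $Z$ and $G_{n+1}\subseteq G_n$. Since the values of $\phi$ are compact subsets of $X$, I may regard $\phi$ as a lower semi-continuous map into $\mathcal{K}(Z)$. The idea is to build, by induction on $n$, finite families of open subsets of $Z$ whose closures lie in $G_n$ and which ``cover'' the values of $\phi$ in the sense made precise below; the unions of these closures will form a decreasing sequence $A_1\supseteq A_2\supseteq\cdots$ of compact subsets of $Z$ with $A_n\subseteq G_n$, and I will take $S=\bigcap_n A_n$. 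Then $S$ is compact and $S\subseteq\bigcap_n G_n=X$ automatically; the whole difficulty is to arrange the construction so that $S$ actually meets every value of $\phi$.

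At stage $n$ I would maintain a finite open cover $\{V^n_i\}_{i}$ of $C$ together with open sets $\{U^n_i\}_i$ in $Z$ satisfying $\overline{U^n_i}\subseteq G_n$, the \emph{matching condition} that $\phi(c)\cap U^n_i\neq\emptyset$ whenever $c\in V^n_i$, and a refinement map $\pi$ linking consecutive stages, namely $V^{n+1}_j\subseteq V^n_{\pi(j)}$ and $\overline{U^{n+1}_j}\subseteq U^n_{\pi(j)}$. To pass from stage $n$ to stage $n+1$, I fix $c\in C$, choose $i$ with $c\in V^n_i$, pick a point $x\in\phi(c)\cap U^n_i$, and use regularity of the compact space $Z$ to find an open $U\ni x$ with $\overline{U}\subseteq U^n_i\cap G_{n+1}$. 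Lower semi-continuity makes $\phi^{-1}(U)$ open, so $V:=\phi^{-1}(U)\cap V^n_i$ is an open neighbourhood of $c$ on which the matching condition holds with the set $U$; letting $c$ vary and extracting a finite subcover of the compact space $C$ produces the stage-$(n+1)$ data, with $\pi$ recording which $U^n_i$ each new set sits inside. The base stage is identical but unconstrained, using only $\phi(c)\subseteq X\subseteq G_1$. Setting $A_n=\bigcup_i\overline{U^n_i}$ gives the desired decreasing sequence of compacta.

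The main obstacle, and the heart of the argument, is verifying that $S=\bigcap_n A_n$ is a section, since the point of $\phi(c)$ witnessing the matching condition at stage $n$ may a priori wander and escape in the limit. Here I would exploit the refinement map to force coherence. Fix $c\in C$ and form the tree whose level-$n$ nodes are the indices $i$ with $c\in V^n_i$, declaring $i$ at level $n+1$ to be a child of $\pi(i)$ at level $n$; this is well defined because $V^{n+1}_i\subseteq V^n_{\pi(i)}$ guarantees $c\in V^n_{\pi(i)}$. Each level is nonempty (the $V^n_i$ cover $C$) and finite, so the tree is infinite and finitely branching, and K\"onig's lemma yields a branch $(i_n)$ with $c\in V^n_{i_n}$ and $\overline{U^{n+1}_{i_{n+1}}}\subseteq U^n_{i_n}$ for all $n$. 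Then the sets $\phi(c)\cap\overline{U^n_{i_n}}$ form a decreasing sequence of nonempty compacta (nonempty by the matching condition, nested by the refinement), so their intersection is nonempty; any point in it lies in $\phi(c)$, in every $\overline{U^n_{i_n}}\subseteq G_n$ and hence in $X$, and in every $A_n$ and hence in $S$. This furnishes a point of $\phi(c)\cap S$ for each $c\in C$, completing the proof. I expect the coherence step to be the only genuinely delicate point; the remaining ingredients are the finite-intersection property of nested compacta in $Z$, the openness of $\phi^{-1}(U)$ coming from lower semi-continuity, and compactness of $C$, all of which enter routinely.
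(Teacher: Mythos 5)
Your proof is correct, but it takes a genuinely different route from the paper, which in fact contains no argument at all: there the theorem is dispatched in one line as a direct consequence of \cite[Theorem 2]{B} and \cite[Theorem 4.1]{DGL}, i.e., by routing through the consonance literature (Čech-completeness of $G_\delta$ subsets of compacta together with the compact-section property of such spaces for lower semi-continuous maps on compact domains). You instead give a self-contained, elementary construction, and it checks out: regarding $\phi$ as l.s.c.\ into $\mathcal{K}(Z)$ is legitimate since $\phi(c)\cap W=\phi(c)\cap(W\cap X)$ for open $W\subseteq Z$; regularity of the compact Hausdorff $Z$ supplies the closures $\overline{U}\subseteq U^n_i\cap G_{n+1}$; compactness of $C$ yields the finite covers; the sets $A_n=\bigcup_i\overline{U^n_i}$ are compact, decreasing, and contained in $G_n$, so $S=\bigcap_n A_n$ is compact and lies in $X$. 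The genuinely delicate point is, as you say, coherence of the witnesses, and your tree/K\"onig's-lemma argument handles it correctly: for fixed $c$ the level-$n$ nodes are nonempty (covering) and finite, the parent map is well defined because $V^{n+1}_j\subseteq V^n_{\pi(j)}$, and an infinite branch $(i_n)$ produces the nested nonempty compacta $\phi(c)\cap\overline{U^n_{i_n}}$ whose common point witnesses $\phi(c)\cap S\neq\emptyset$. The trade-off between the two approaches: yours makes the statement self-contained, revealing that nothing beyond compactness, lower semi-continuity, and K\"onig's lemma is needed for this special case (domain compact, target $G_\delta$ in a compactum); the paper's citation is shorter and places the result in its natural general context, where the cited theorems give information well beyond what is used here.
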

\begin{proof}
  This is a direct consequence of \cite[Theorem 2]{B} and \cite[Theorem 4.1]{DGL}.
\end{proof}

\subsection{The $k$-Porada game}
Let us recall the description of a certain topological game that will be of great importance in the proof of Theorem \ref{theorem Menger}.
The game defined below is so-called $k$-modification (instead of points one considers compact sets) of a game introduced in \cite{Po}.
It was studied in \cite{T} and, more recently, in \cite{K1}. Our terminology follows \cite{T}.
Let $Z$ be a space and let $X\subseteq Z$ be a subspace of $Z$.

The \textit{$k$-Porada game} on $Z$ with values in $X$ is a game with $\omega$-many innings, played alternately
by two players: I and II.
Player I begins the game and makes the first move by choosing
a pair $(K_0,U_0)$, where $K_0\subseteq X$ is nonempty compact and $U_0$ is an open set in $Z$ that contains $K_0$. Player II responds by choosing an open
(in $Z$) set $V_0$ such that
$K_0\subseteq V_0\subseteq U_0$. In the second round of the game, player I picks a pair $(K_1,U_1)$, where
$K_1$ is a nonempty compact subset of $V_0$ and $U_1$ is an open subset of $Z$ with
$K_1\subseteq U_1\subseteq V_0$. Player II responds by picking an open (in $Z$) set $V_1$ such that $K_1\subseteq V_1\subseteq U_1$. The game continues in this way and stops
after $\omega$ many rounds. Player II wins the game if $\emptyset\neq \bigcap_{n\in\NN} U_n (= \bigcap_{n\in \NN} V_n) \subseteq X$.
Otherwise player I wins.

The game described above is denoted by $kP(Z,X)$.

\subsection{Strategies}

Denote by
$\mathcal{T}_Y$ the collection of all nonempty open subsets of the space $Y$.
A \textit{strategy} of player I in the game $kP(Z,X)$ is a map $\sigma$ defined inductively as follows:
$\sigma(\emptyset)\in \mathcal{K}(X)\times \mathcal{T}_Z$. If the strategy $\sigma$ is defined for the first $n$ moves then an $n$-tuple $(V_0,V_1,\ldots,V_{n-1})\in \mathcal{T}_Z^n$ is called
\textit{admissible} if $K_0\subseteq V_0\subseteq U_0$ and $K_i\subseteq V_i\subseteq U_i$, and $(K_i,U_i)=\sigma(V_0,\ldots V_{i-1})$ for $i\in\{1,\ldots , n-1\}$. For any
admissible $n$-tuple $(V_0,\ldots, V_{n-1})$ we choose a pair $(K_n,U_n)\in \mathcal{K}(V_{n-1})\times \mathcal{T}_{V_{n-1}}$ with $K_n\subseteq U_n$ and we set

$$\sigma(V_0,\ldots ,V_{n-1})=(K_n,U_n).$$

A strategy $\sigma$ of player I in the game $kP(Z,X)$ is called \textit{winning} if player I wins every run of the game $kP(Z,X)$ in which she plays according to the
strategy $\sigma$.

We will need the following simple, though a little technical lemma concerning the game $kP(Z,X)$.

\begin{lemma}\label{technical lemma}
Suppose that $X\subseteq Z$ where $Z$ is compact. Assume that there is
a countable family $\{F_i:i=1,2,\ldots\}$ consisting
of compact subsets of $Z$ and satisfying $\bigcup_{i=1}^\infty F_i\supseteq X$.

If $\sigma$ is a winning strategy of player I in the game $kP(Z,Z\setminus X)$, then for every $k\in \mathbb{N}$ and every admissible tuple $(V_0,\ldots ,V_k)$, there exists $m>k$ and open sets $V_{k+1},\ldots , V_m$ such that the tuple $(V_0,\ldots ,V_m)$ is admissible and $\sigma(V_0,\ldots ,V_m)=(K_{m+1},U_{m+1})$ satisfies $K_{m+1}\cap \bigcup_{i=1}^\infty F_i\neq \emptyset$.
\end{lemma}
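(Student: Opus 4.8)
The plan is to argue by contradiction, converting a failure of the conclusion into a run of $kP(Z,Z\setminus X)$ that player II wins even though player I follows the winning strategy $\sigma$. So I would suppose the lemma fails: there are $k\in\NN$ and an admissible tuple $(V_0,\dots,V_k)$ such that for \emph{every} $m>k$ and every admissible extension $(V_0,\dots,V_m)$ the pair $\sigma(V_0,\dots,V_m)=(K_{m+1},U_{m+1})$ satisfies $K_{m+1}\cap\bigcup_{i=1}^\infty F_i=\emptyset$. From the partial play $(V_0,\dots,V_k)$ I would continue the game indefinitely, keeping player I on $\sigma$ while letting player II dodge the compact sets $F_1,F_2,\dots$ one per inning.

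Concretely, recall that $Z$, being compact Hausdorff, is normal. At inning $k+1$ put $(K_{k+1},U_{k+1})=\sigma(V_0,\dots,V_k)$ and let player II pick any open $V_{k+1}$ with $K_{k+1}\subseteq V_{k+1}\subseteq\overline{V_{k+1}}\subseteq U_{k+1}$. Then I would proceed by induction: for $j\geq 1$ and $n=k+1+j$ the strategy prescribes $(K_n,U_n)=\sigma(V_0,\dots,V_{n-1})$, and since $n-1>k$ the standing assumption gives $K_n\cap\bigcup_i F_i=\emptyset$; in particular $K_n$ is disjoint from the compact set $F_j$, hence from the closed set $F_j\cup(Z\setminus U_n)$. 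Normality then yields an open $V_n$ with $K_n\subseteq V_n\subseteq\overline{V_n}\subseteq U_n\setminus F_j$. At each stage $(V_0,\dots,V_n)$ remains admissible, so the construction is legitimate and the appeal to the hypothesis is justified at every step.

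Finally I would verify that player II wins this run, i.e. that $\emptyset\neq\bigcap_n U_n=\bigcap_n V_n\subseteq Z\setminus X$. The inclusion is the payoff of the dodging: $V_{k+1+j}\cap F_j=\emptyset$ forces $\bigcap_n V_n$ to miss every $F_j$, hence to miss $\bigcup_i F_i\supseteq X$. Nonemptiness comes from compactness of $Z$: for $n\geq k+2$ one has $\overline{V_n}\subseteq U_n\subseteq V_{n-1}$, so $(\overline{V_n})_{n\geq k+2}$ is a decreasing family of nonempty compact sets and $\bigcap_n V_n=\bigcap_{n\geq k+2}\overline{V_n}\neq\emptyset$. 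Thus player II wins a run in which player I played according to $\sigma$, contradicting that $\sigma$ is winning, and the lemma follows.

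The main obstacle is designing player II's responses so that the two requirements on the limit set hold simultaneously: the nestedness $\overline{V_n}\subseteq U_n$ needed for nonemptiness, and the separations $V_n\cap F_j=\emptyset$ needed to push the limit off $X$. Both are obtained from a single application of normality per inning, once one notices that the hypothesis forces $K_n\cap\bigcup_i F_i=\emptyset$ from inning $k+2$ onward; the only delicate point is the bookkeeping that ensures every $F_j$ is dodged at some inning (here inning $k+1+j$), with the first inning $k+1$ left free merely to start the nested-closure chain.
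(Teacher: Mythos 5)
Your proof is correct and follows essentially the same argument as the paper: assume the conclusion fails, let player II dodge each $F_j$ in turn using normality of $Z$ while keeping $K_n\subseteq V_n\subseteq\overline{V_n}\subseteq U_n$, so that compactness yields a nonempty intersection that misses $\bigcup_i F_i\supseteq X$, contradicting that $\sigma$ is winning. Your bookkeeping is in fact slightly more careful than the paper's: by dodging $F_j$ only at inning $k+1+j$ you invoke the standing assumption only for $\sigma$ applied to tuples $(V_0,\ldots,V_{n-1})$ with $n-1>k$, whereas the paper nominally applies its hypothesis $(\ast)$ already to $K_{k+1}=\sigma(V_0,\ldots,V_k)$, a case that $(\ast)$ as stated does not literally cover.
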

\begin{proof}
Striving for a contradiction, suppose that for some admissible $(k+1)$-tuple $(V_0,\ldots ,V_{k})$ we have:
\begin{equation}
    \begin{aligned}
&\mbox{For every } m>k \mbox{, if } (V_0,\ldots,V_k,\ldots, V_m) \mbox{ is admissible and }\\
&\sigma(V_0,\ldots, V_m)=(K_{m+1},U_{m+1})\mbox{, then } K_{m+1}\cap \bigcup \{F_i:i\geq 1\} = \emptyset 
    \end{aligned}
    \tag{$\ast$}\label{games *}
    \end{equation}
We recursively define sets $V_m$, for $m>k$, as follows:
If the sets $V_0,\ldots V_{m-1}$ are already defined in such a way that the tuple $(V_0,\ldots V_{m-1})$ is admissible, we consider the pair
$(K_m,U_m)=\sigma(V_0,\ldots V_{m-1})$. Let
$$V'_m=U_m\cap (Z\setminus F_{(m-k)}).$$
By \eqref{games *}, $K_m\subseteq V'_m$.
Let $V_m$ be an open set in $Z$ satisfying:
$$K_m\subseteq V_m\subseteq \overline{V_m}\subseteq V'_m.$$
It is clear that the tuple $(V_0,\ldots V_{m})$ is admissible and we can proceed with our recursive construction.

In this way we define a play in the game $kP(Z,Z\setminus X)$ in which player I applies her strategy and fails. Indeed, we have $\bigcap_{m=0}^\infty V_m\neq \emptyset$, because $V_{m+1}\subseteq \overline{V_{m+1}}\subseteq V_m\subseteq Z$ and
$Z$ is compact. Moreover, $X\cap \bigcap_{m=0}^\infty V_m=\emptyset$, because $V_{k+i}\cap F_i = \emptyset$ and $X\subseteq \bigcup_{i=1}^\infty F_i$. 
\end{proof}

From the previous lemma we can easily deduce:

\begin{prop}\label{proposition modify strategy}
Suppose that $X\subseteq Z$ where $Z$ is compact. Assume that there is
a countable family $\{F_i:i=1,2,\ldots\}$ consisting
of compact subsets of $Z$ and satisfying $\bigcup_{i=1}^\infty F_i\supseteq X$.

If player I has a winning strategy in the game $kP(Z,Z\setminus X)$, then player I has a winning strategy $\sigma'$ in this game such that every compact set played by player I according to the strategy $\sigma'$, meets $\bigcup_{i=1}^\infty F_i$.
\end{prop}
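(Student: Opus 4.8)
The plan is to derive the proposition directly from Lemma~\ref{technical lemma} by a ``telescoping'' argument applied to the given winning strategy $\sigma$ of player~I in $kP(Z,Z\setminus X)$. Alongside the real game I would run a \emph{virtual} $\sigma$-play, and arrange that every move player~I actually makes in the real game coincides with a move of the virtual play whose compact set already meets $\bigcup_{i=1}^\infty F_i$; the intermediate (``hidden'') rounds of the virtual play, which exist precisely because of Lemma~\ref{technical lemma}, are simply not shown to the real opponent. The resulting strategy $\sigma'$ will then be the desired one.

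First I would isolate the feature of the game that makes skipping rounds legal. Along any run the rules force $K_n\subseteq U_n\subseteq V_{n-1}\subseteq U_{n-1}$, so the sequences $(U_n)$ and $(V_n)$ are decreasing. Consequently, if $(V_0,\dots,V_m)$ is admissible and $j'>j$, then $U_{j'}\subseteq V_{j}$. This is exactly the inclusion needed so that, after the opponent has answered a move indexed $j$ in the virtual play, player~I may jump ahead to a later index $j'$ and still present a legal move.

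Next I would define $\sigma'$ recursively, keeping the virtual $\sigma$-play as auxiliary data. For the first move, write $\sigma(\emptyset)=(K_0,U_0)$, choose any admissible $V_0$ with $K_0\subseteq V_0\subseteq U_0$, and apply Lemma~\ref{technical lemma} to the length-one admissible tuple $(V_0)$ to obtain an admissible virtual tuple $(V_0,\dots,V_m)$ with $\sigma(V_0,\dots,V_m)=(K_{m+1},U_{m+1})$ and $K_{m+1}\cap\bigcup_{i=1}^\infty F_i\neq\emptyset$; player~I's first real move is $(K_{m+1},U_{m+1})$, which is legal since every virtual compact set lies in $Z\setminus X$. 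In general, if player~I's $n$-th real move was the virtual pair $(K_{j_n},U_{j_n})$ and the opponent answers by an open set $V'_n$ (so $K_{j_n}\subseteq V'_n\subseteq U_{j_n}$), I feed $V'_n$ into the virtual play as $V_{j_n}$ and apply Lemma~\ref{technical lemma} to the now-admissible virtual tuple to extend it to some index $j_{n+1}>j_n$ whose compact set meets $\bigcup_{i=1}^\infty F_i$; that compact set and its open hull form player~I's next real move, which is legal by the nesting observation ($U_{j_{n+1}}\subseteq V'_n$). Fixing one choice at each application makes $\sigma'$ a genuine strategy.

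Finally I would verify that $\sigma'$ is winning. Because each extension produced by the Lemma is finite, the indices $j_n$ tend to infinity, so the real play realizes a cofinal subsequence $(U_{j_n})_n$ of the decreasing virtual sequence $(U_j)_j$, whence $\bigcap_n U'_n=\bigcap_j U_j$. The virtual play is a legitimate $\sigma$-run, so since $\sigma$ is winning we have $\bigcap_j U_j=\emptyset$ or $\bigcap_j U_j\not\subseteq Z\setminus X$; the same dichotomy then holds for the real play, and player~I wins it. By construction every compact set played under $\sigma'$ meets $\bigcup_{i=1}^\infty F_i$. I expect the only real difficulty to be bookkeeping: confirming that each invocation of Lemma~\ref{technical lemma} starts from an honestly admissible virtual tuple (in particular handling the initial move), that the hidden rounds never obstruct legality of the real moves, and that the virtual play genuinely runs through all of $\omega$.
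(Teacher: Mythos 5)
Your proposal is correct and follows essentially the same route as the paper: run a virtual play of the given winning strategy $\sigma$, invoke Lemma~\ref{technical lemma} at each stage to fast-forward to a move whose compact set meets $\bigcup_{i=1}^\infty F_i$, feed the real opponent's responses back into the virtual play, and use the nesting of the open sets to make the skipped rounds legal. In fact you spell out the winning-condition verification (cofinality of $(U_{j_n})_n$ in the decreasing virtual sequence, hence equality of the intersections) that the paper's proof leaves implicit in its ``and so on.''
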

\begin{proof}
Let $\sigma$ be an arbitrary winning strategy of player I in the game $kP(Z,Z\setminus X)$. We will define a strategy $\sigma'$ recursively. Consider $(K_0,U_0)=\sigma(\emptyset)$. Let $V_0$ be an arbitrary open set in $Z$ such that $K_0\subseteq V_0\subseteq U_0$. By Lemma \ref{technical lemma} there is $m_0$ and sets $V_1,\ldots ,V_{m_0}$ such that the tuple $(V_0,\ldots ,V_{m_0})$ is admissible and if
$\sigma(V_0,\ldots ,V_{m_0})=(K_{m_0},U_{m_0})$, then $K_{m_0}\cap \bigcup_{i=1}^\infty F_i\neq\emptyset$. We define

$$\sigma'(\emptyset)=(K_{m_0},V_{m_0}).$$

If $V^0=V_{m_0+1}$ is an open set in $Z$ with
$K_{m_0}\subseteq V^0\subseteq U_{m_0}$, then the tuple
$(V_0,\dots, V_{m_0},V_{m_0+1})$ is admissible for $\sigma$. Hence, by Lemma \ref{technical lemma}, there is $m_1>m_0+1$ and sets $V_{m_0+2},\ldots ,V_{m_1}$ such that the tuple $(V_0,\ldots ,V_{m_1})$ is admissible and if
$\sigma(V_0,\ldots ,V_{m_1})=(K_{m_1},U_{m_1})$, then $K_{m_1}\cap \bigcup_{i=1}^\infty F_i\neq\emptyset$. We define

$$\sigma'(V^0)=(K_{m_0},V_{m_0})$$

and so on.
\end{proof}

\subsection{Position of a space in its compactification}

It was already observed by Smirnov \cite{Sm} that the Lindel\"of property of a Tychonoff
space $X$ can be conveniently characterized by the way $X$ is placed in its compactification $bX$ of $X$ (cf. \cite[3.12.25]{Eng}).

A similar characterization of the Hurewicz property were obtained by Just \textit{et al.} \cite{JMSS} (for the subsets of the real line),
Banakh and Zdomskyy \cite{BZ} (for separable metrizable spaces) and Tall \cite{Tall} (the general case). We have the following
(see \cite[Theorem 6]{Tall}):

\begin{thrm}\label{Hurewicz characterization}
For any Tychonoff space $X$ the following conditions are equivalent:
\begin{enumerate}
    \item $X$ has the Hurewicz property
    \item For every compactification $b X$ of $X$ and every $\sigma$-compact subset $F$ of the remainder $b X\setminus X$,
    there exists a $G_\delta$-subset $G$ of $b X$ such that $F\subseteq G\subseteq b X\setminus X$.
    \item There exists a compactification $b X$ of $X$ such that for every $\sigma$-compact subset $F$ of the remainder $b X\setminus X$,
    there exists a $G_\delta$-subset $G$ of $b X$ such that $F\subseteq G\subseteq b X\setminus X$.
\end{enumerate}
\end{thrm}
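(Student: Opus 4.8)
The plan is to establish the implications $(1)\Rightarrow(2)\Rightarrow(3)\Rightarrow(1)$, of which $(2)\Rightarrow(3)$ is immediate since any single compactification witnesses $(3)$. The conceptual heart of the argument is a reformulation of the separation condition obtained by taking complements inside the compact space $bX$: the complement of a $G_\delta$ subset of $bX$ is an $F_\sigma$, hence $\sigma$-compact, and conversely. Thus condition $(3)$ for a fixed $bX$ is equivalent to the statement that \emph{every $G_\delta$-subset $H$ of $bX$ with $X\subseteq H$ contains a $\sigma$-compact set $S$ with $X\subseteq S\subseteq H$}. Indeed, given such $H$, the set $bX\setminus H$ is a $\sigma$-compact subset of the remainder, and a $G_\delta$ set $G$ separating it from $X$ yields $S=bX\setminus G$; the reverse passage is identical. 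This dual form is what links the position of $X$ in $bX$ to a covering property.

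For $(1)\Rightarrow(2)$ I would fix a compactification $bX$ and a $\sigma$-compact $F=\bigcup_n F_n\subseteq bX\setminus X$ with the $F_n$ compact and increasing. For each $n$, since $bX$ is compact Hausdorff I can cover $X$ by relatively open sets $O_x\cap X$, where $x\in O_x$ is open in $bX$ and $\overline{O_x}\cap F_n=\emptyset$; this yields one open cover $\mathscr U_n$ of $X$ for each $n$. Applying the Hurewicz property to $(\mathscr U_n)_{n}$ produces finite subfamilies $\mathscr V_n\subseteq\mathscr U_n$ with every point of $X$ lying in $\bigcup\mathscr V_n$ for all but finitely many $n$. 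Letting $C_n$ be the (finite, hence closed) union in $bX$ of the closures of the members of $\mathscr V_n$, I get $C_n\cap F_n=\emptyset$ and $X\subseteq\bigcup_N\bigcap_{n\ge N}C_n$. Because the $F_n$ increase, each closed set $\bigcap_{n\ge N}C_n$ is disjoint from all of $F$; hence $G:=bX\setminus\bigcup_N\bigcap_{n\ge N}C_n$ is a $G_\delta$ set with $F\subseteq G\subseteq bX\setminus X$, as required.

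For $(3)\Rightarrow(1)$ I would first extract the Lindel\"of property of $X$: given an open cover, the compact remainder $K$ left uncovered by the extensions of its members to $bX$ lies in $bX\setminus X$, so $(3)$ separates $K$ from $X$ by a $G_\delta$, and complementing exhibits $X$ inside a $\sigma$-compact set each of whose compact pieces is covered by finitely many members, giving a countable subcover. With $X$ Lindel\"of, I may replace each cover $\mathscr U_n$ by an increasing sequence $U_{n,1}\subseteq U_{n,2}\subseteq\cdots$ of finite unions of its members with union $X$, extend these to increasing open sets $\tilde U_{n,k}$ in $bX$, and set $H=\bigcap_n\bigcup_k\tilde U_{n,k}$, a $G_\delta$ containing $X$. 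The dual form of $(3)$ then gives a $\sigma$-compact $S=\bigcup_m S_m\supseteq X$ inside $H$ with the $S_m$ compact and increasing; compactness yields for each $n,m$ an index $k(n,m)$ with $S_m\subseteq\tilde U_{n,k(n,m)}$, and the diagonal choice $k(n)=k(n,n)$ succeeds: any $x\in X$ lies in some $S_m$, hence in $\tilde U_{n,k(n)}$ for all $n\ge m$, which is exactly the ``all but finitely many'' conclusion defining the Hurewicz property.

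The main obstacle is the direction $(3)\Rightarrow(1)$, and within it the passage to the dual form together with the diagonalization: one must recognize that separating $\sigma$-compact remainders by $G_\delta$ sets is the same as trapping $X$ in a $\sigma$-compact subset of each of its $G_\delta$-neighborhoods, and then transfer a single such $\sigma$-compact set, via compactness of its pieces, into one simultaneous eventually-covering choice against all of the countably many covers at once. The Lindel\"of reduction, while routine once the complementation trick is available, is what permits the covers to be taken increasing and thereby makes the diagonal argument possible.
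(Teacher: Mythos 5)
Your proof is correct. Note, however, that the paper does not prove this theorem at all: it is imported as a quoted result, attributed to Just \emph{et al.} (for subsets of the real line), Banakh--Zdomskyy (separable metrizable case) and Tall (general case), with the citation \cite[Theorem 6]{Tall}. So there is no in-paper argument to compare against; what you have produced is a self-contained proof of a result the author treats as a black box. Your argument is sound at every step: the complementation duality (separating $\sigma$-compact subsets of the remainder by $G_\delta$-sets is equivalent to trapping $X$ in a $\sigma$-compact subset of every $G_\delta$-set containing $X$, using that an $F_\sigma$-subset of a compact space is $\sigma$-compact); the regularity-based covers $\{O_x\cap X\}$ with $\overline{O_x}\cap F_n=\emptyset$ for $(1)\Rightarrow(2)$, where the increasing choice of the $F_n$ makes each compact set $\bigcap_{n\geq N}C_n$ miss all of $F$; the extraction of the Lindel\"of property from $(3)$ by separating $X$ from the compact set uncovered by the extensions; and the diagonalization $k(n)=k(n,n)$ against an increasing $\sigma$-compact trap $S=\bigcup_m S_m$, which works precisely because $x\in S_m$ implies $x\in S_n\subseteq \tilde U_{n,k(n,n)}$ for all $n\geq m$. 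This is essentially the standard proof of the characterization in the literature, so your reconstruction both fills the gap left by the citation and matches the known route; it buys the reader a fully elementary argument (regularity, compactness, and one diagonalization) at the cost of a page, where the paper spends one line.
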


Let $X$ be a space and let $bX$ be a compactification of $X$.
It was proved in \cite{T} that the $k$-Porada game on $bX$ with values in $bX\setminus X$,
characterizes the Menger property of $X$ (cf. Remark \ref{remark Telgarsky}). We have the following:

\begin{thrm}\cite[Theorem 2]{T}\label{thrm_Telgarsky}
If $X\subseteq Z$, where $Z$ is compact
then the following two conditions are equivalent:
\begin{enumerate}
    \item $X$ has the Menger property
    \item Player I has no winning strategy in the $k$-Porada game\\ $kP(Z,Z\setminus X)$
\end{enumerate}
\end{thrm}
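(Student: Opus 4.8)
The plan is to reduce Theorem~\ref{thrm_Telgarsky} to the classical characterisation of the Menger property by the \emph{Menger game} $M(X)$, in which player ONE plays open covers $\mathscr{U}_n$ of $X$, player TWO answers with finite subfamilies $\mathscr{V}_n\subseteq\mathscr{U}_n$, and TWO wins when $\bigcup_{n\in\NN}\bigcup\mathscr{V}_n\supseteq X$. By the Hurewicz--Telg\'{a}rsky--Scheepers theorem, $X$ is Menger if and only if ONE has no winning strategy in $M(X)$ (see \cite{T} and the survey \cite{SS}). Granting this, the whole statement follows from the strategy-translation
\[
\text{player I has a winning strategy in }kP(Z,Z\setminus X)\Longleftrightarrow \text{player ONE has a winning strategy in }M(X),
\]
since then $X$ is Menger $\iff$ ONE has no winning strategy in $M(X)$ $\iff$ player I has no winning strategy in $kP(Z,Z\setminus X)$, i.e. condition (2) holds. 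The dictionary behind the translation is that player I and player ONE play analogous roles (each tries to keep a point of $X$ in the final residual set), that TWO's passage to a finite subfamily mirrors II's shrinking move, and that the Porada residual $\bigcap_{n}U_n=\bigcap_n V_n$ corresponds to the uncovered set $Z\setminus\bigcup_n\bigcup\mathscr{V}_n$ of a Menger run.

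To turn a winning strategy $\tau$ of ONE into one of player I, I would run the two games in parallel: along a play of $kP(Z,Z\setminus X)$ I feed II's shrinkings back to $\tau$ and convert the covers it returns into neighbourhoods by complementation. If $\tau$ prescribes the cover $\mathscr{U}_n$, a finite subfamily $\mathscr{V}_n\subseteq\mathscr{U}_n$ determines the compact set $Z\setminus\bigcup\mathscr{V}_n$, and player I plays a neighbourhood $U_n$ trapped between this complement and the portion of the remainder still in play, anchored by a compact core $K_n\subseteq(Z\setminus X)\cap V_{n-1}$ (such a core exists as long as the run has not already been pushed into $X$). Bookkeeping the moves so that $\bigcap_n U_n=Z\setminus\bigcup_n\bigcup\mathscr{V}_n$, a win for ONE (a point of $X$ left uncovered) becomes a win for player I (a point of $X$ in $\bigcap_n U_n$). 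Conversely, from a winning strategy $\sigma$ of player I I would manufacture ONE's strategy out of the tree of $\sigma$'s replies: fixing a position, as II ranges over the admissible shrinkings $V$, the complements $Z\setminus\overline{U(V)}$ of the neighbourhoods $\sigma$ returns form an open cover of $X$, because a winning $\sigma$ cannot let any point of $X$ persist in the residual, so each such point is expelled by some reply. ONE plays this cover, TWO's finite subfamily names finitely many admissible shrinkings, and the compactness of $Z$ lets me amalgamate them into a single legal move of II before the recursion continues.

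The heart of the argument, and its main obstacle, is making this translation faithful, and two points demand the most care. First, matching II's \emph{geometric} shrinkings with TWO's \emph{combinatorial} finite subfamilies: turning an open set into finitely many cover elements, and conversely amalgamating finitely many admissible responses into one shrinking, forces one through compact complements in $Z$ and requires a Lindel\"of-type pruning of the a~priori very wide strategy tree so that the inductive bookkeeping closes up; this is where compactness of $Z$ and its normality are spent. Second, the mismatch of winning conditions must be defused: player I also wins when $\bigcap_n U_n=\emptyset$, whereas ONE needs an actual point of $X$ in the residual. I would handle this by tracking only the runs in which II shrinks with $\overline{V_n}\subseteq U_n$; by compactness of $Z$ these keep $\bigcap_n U_n\neq\emptyset$, so on them $\sigma$ can no longer win by emptiness and must genuinely drive the limit into $X$, matching precisely what ONE needs. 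Together with the compact cores $K_n\subseteq Z\setminus X$, which realise the residual as a limit of remainder sets, this makes II's successful defence of a remainder point correspond exactly to TWO's successful covering of $X$, completing the equivalence.
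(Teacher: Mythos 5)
Your global architecture coincides with the paper's: the paper does not prove this theorem either, but obtains it by citing \cite[Theorem 2]{T} for the equivalence of $kP(Z,Z\setminus X)$ with the Menger game and then invoking the classical fact that $X$ is Menger if and only if ONE has no winning strategy in the Menger game (this is exactly the content of Remark \ref{remark Telgarsky}). Since you instead attempt to prove the game equivalence yourself, the attempt has to be judged on its own, and there is a genuine gap in the direction that converts a winning strategy $\sigma$ of player I in $kP(Z,Z\setminus X)$ into a winning strategy of ONE in the Menger game.

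At a position where $\sigma$ produces $(K_n,U_n)$, you let ONE play the cover consisting of the sets $Z\setminus\overline{U(V)}$, where $(K(V),U(V))$ denotes $\sigma$'s reply to the admissible shrinking $V$. When TWO selects $Z\setminus\overline{U(V_1)},\ldots,Z\setminus\overline{U(V_k)}$, the only way to amalgamate this into a legal move of II is essentially $V^{(n)}=V_1\cap\dots\cap V_k$ (a legal move must contain $K_n$, which the sets $U(V_i)$ need not do). The simulated run then continues with $\sigma$'s reply to $V^{(n)}$, so all you know about the residual $R=\bigcap_m V^{(m)}$ is that $R\subseteq V_i$ for each $i$; there is no reason to have $R\subseteq\overline{U(V_i)}$, because $U(V_i)$ is $\sigma$'s answer to a move that was never actually played and can be a tiny set sitting anywhere inside $V_i$. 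Hence the point $x\in X\cap R$ guaranteed by the winningness of $\sigma$ (together with your closure-nesting device, which does correctly rule out $R=\emptyset$) may well lie in some selected set $Z\setminus\overline{U(V_i)}$; then TWO has covered $x$ after all, no contradiction arises, and ONE's strategy is not shown to be winning. The repair is to index the cover by II's candidate moves themselves: ONE should play $\{Z\setminus\overline{V}:K_n\subseteq V\subseteq\overline{V}\subseteq U_n\}$, which covers $X$ by normality of the compact space $Z$ (as $K_n\subseteq Z\setminus X$). Then $R\subseteq V^{(n)}\subseteq V_i\subseteq\overline{V_i}$ makes $R$ disjoint from every element TWO ever selects, so $x\in X\cap R$ witnesses that TWO failed to cover $X$. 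Your other direction is essentially right as sketched, with $K_n=Z\setminus\bigl(\bigcup_{i<n}\bigcup\mathscr{V}_i\cup\bigcup\mathscr{U}_n\bigr)$ nonempty precisely because a winning ONE can never reach a position where $Z$ itself is covered by the previously selected sets together with the current cover; and note that no ``Lindel\"of-type pruning'' of the strategy tree is needed anywhere, since the Menger game places no cardinality restriction on ONE's covers.
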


\begin{rem}\label{remark Telgarsky}
It is perhaps worth mentioning here that Theorem 2 in \cite{T} asserts actually that the game $kP(Z,Z\setminus X)$ is equivalent to the Menger game.
It is well known however (see e.g. \cite[Theorem 13]{Scheepers}) or \cite[Theorem 2.32]{AD}) that a topological space $X$ is Menger
if and only if player I has no winning strategy in the Menger game.
\end{rem} 

It was recently observed by Krupski and Kucharski \cite{KK} that one can obtain similar characterizations for the projective properties of Hurewicz and Menger.
As usual, by $\beta X$ we denote the \v{C}ech-Stone compactification of $X$. A subset $A$ of a topological space $Z$ is called \textit{zero-set}
if $A=f^{-1}(0)$ for some continuous function $f:Z\to [0,1]$. Vedenissov's lemma (see \cite[1.5.12]{Eng}) asserts that if $Z$ is
a normal space (in particular compact), then $A$ is a zero-set if and only if $A$ is closed $G_\delta$-subset of $Z$. 

The proof of the following assertion is quite easy to derive from Theorem \ref{Hurewicz characterization}.

\begin{prop}\label{proposition proj Hurewicz}
For any Tychonoff space $X$ the following conditions are equivalent:
\begin{enumerate}
    \item $X$ is projectively Hurewicz
    \item For every subset $F$ of $\beta X\setminus X$ being a countable union of zero-sets in $\beta X$,
    there exists a $G_\delta$ subset $G$ of $\beta X$ such that $F\subseteq G\subseteq \beta X\setminus X$.
    \end{enumerate}
\end{prop}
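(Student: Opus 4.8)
The plan is to derive both implications from the Hurewicz characterization of Theorem \ref{Hurewicz characterization}, bridged by two translations. First, a subset $F\subseteq\beta X$ is a countable union of zero-sets if and only if $F=h^{-1}(B)$ for some continuous $h\colon\beta X\to[0,1]^{\NN}$ into the Hilbert cube and some $\sigma$-compact $B\subseteq[0,1]^{\NN}$: given $F=\bigcup_n h_n^{-1}(0)$ one bundles the $h_n$ into $h=(h_n)_n$ and takes $B=\bigcup_n\{y:y_n=0\}$, using $h^{-1}(\bigcup_n A_n)=\bigcup_n h^{-1}(A_n)$; conversely each compact set in $[0,1]^{\NN}$ is a closed $G_\delta$, hence by Vedenissov's lemma a zero-set, and pulls back to a zero-set in $\beta X$. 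Second, a separable metrizable continuous image $Y=f(X)$ may be assumed to lie in $[0,1]^{\NN}$, and the extension $h=\beta f\colon\beta X\to[0,1]^{\NN}$ has image the metrizable compactification $bY=\overline{f(X)}=h(\beta X)$ of $Y$, with $h|_X=f$ and $h(X)=Y$; note that $h$ is then a closed surjection of $\beta X$ onto $bY$.

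For (1)$\Rightarrow$(2), I would start from $F=h^{-1}(B)$ as above, take the separable metrizable image $Y=h(X)$, which is Hurewicz by hypothesis, and form its compactification $bY=h(\beta X)$. The containment $F\subseteq\beta X\setminus X$ translates into $B\cap Y=\emptyset$, so $F''=B\cap bY$ is a $\sigma$-compact subset of the remainder $bY\setminus Y$. Applying Theorem \ref{Hurewicz characterization} to $Y$ (clause (2), for the compactification $bY$) yields a $G_\delta$ set $G''$ with $F''\subseteq G''\subseteq bY\setminus Y$. The sought set is $G=h^{-1}(G'')$, which is $G_\delta$ by continuity of $h$; one checks $F=h^{-1}(B)=h^{-1}(F'')\subseteq G$ (using $h(\beta X)=bY$, so that $h^{-1}(B)=h^{-1}(B\cap bY)$) and $G\subseteq\beta X\setminus X$ (since $h(X)=Y$ misses $G''$).

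For (2)$\Rightarrow$(1), fix a separable metrizable image $Y=f(X)$ and its compactification $bY=h(\beta X)$ as above; by Theorem \ref{Hurewicz characterization} (clause (3)) it suffices, for an arbitrary $\sigma$-compact $F'\subseteq bY\setminus Y$, to squeeze a $G_\delta$ of $bY$ between $F'$ and $bY\setminus Y$. Pulling back, $F=h^{-1}(F')$ is a countable union of zero-sets in $\beta X$ contained in $\beta X\setminus X$, so hypothesis (2) provides a $G_\delta$ set $G$ with $F\subseteq G\subseteq\beta X\setminus X$. The main obstacle is that a $G_\delta$ does not push forward to a $G_\delta$ along $h$, so one cannot simply take $h(G)$; here the closedness of $h$ is decisive. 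Writing the $F_\sigma$ complement $\beta X\setminus G=\bigcup_n E_n$ with each $E_n$ compact, I would push the pieces forward: each $h(E_n)$ is compact, $\bigcup_n h(E_n)\supseteq h(X)=Y$, and $h(E_n)\cap F'=\emptyset$ because any point of $F'$ has its entire $h$-fibre inside $h^{-1}(F')=F$, which is disjoint from $E_n$. Hence $G'=bY\setminus\bigcup_n h(E_n)$ is a $G_\delta$ of $bY$ with $F'\subseteq G'\subseteq bY\setminus Y$, verifying clause (3) and showing $Y$ is Hurewicz. As $Y$ was an arbitrary separable metrizable continuous image, $X$ is projectively Hurewicz.
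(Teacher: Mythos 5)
Your proof is correct, and it follows exactly the route the paper indicates: the paper gives no details, merely asserting that the proposition ``is quite easy to derive from Theorem \ref{Hurewicz characterization}'' (citing \cite{KK}), and your argument supplies precisely that derivation. The two translations you use --- realizing a countable union of zero-sets as $h^{-1}(B)$ for $h:\beta X\to[0,1]^{\NN}$ with $B$ $\sigma$-compact, and pushing a $G_\delta$ forward by taking complements of images of the compact pieces of its $F_\sigma$ complement --- are sound and are the standard way to carry out this reduction.
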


In order to formulate a respective result for the projective Menger property we need the following modification of the $k$-Porada game.
Let $Z$ be a compact space and let $X\subseteq Z$ be a subspace of $Z$.
The \textit{$z$-Porada game} on $Z$ with values in $X$ (denoted by $zP(Z,X)$) is played as $kP(Z,X)$ with the only difference that compact
sets played by player II are required to be additionally zero-sets in $Z$. A strategy for player I in the game $zP(Z,X)$ is defined analogously with
obvious modifications.
We have the following (see \cite{KK}):

\begin{prop}\label{proposition proj Menger}
The following two conditions are equivalent:
\begin{enumerate}
    \item $X$ has the projective Menger property
    \item Player I has no winning strategy in the $z$-Porada game\\ $zP(\beta X,\beta X\setminus X)$
\end{enumerate}
\end{prop}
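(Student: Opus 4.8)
The plan is to reduce the statement to Telgárski's Theorem \ref{thrm_Telgarsky} via the standard dictionary between zero-sets of $\beta X$ and continuous maps of $X$ onto separable metrizable spaces. Recall that for a countable family $\mathcal F\subseteq C(\beta X,[0,1])$ the evaluation map $g=(\zeta)_{\zeta\in\mathcal F}\colon\beta X\to[0,1]^{\mathcal F}$ has metrizable compact image $cM:=g(\beta X)$, that $M':=g(X)$ is a separable metrizable continuous image of $X$, and that a subset of $\beta X$ is the $g$-preimage of a closed subset of $cM$ exactly when it is a zero-set determined by the members of $\mathcal F$. Conversely, any continuous $f\colon X\to M'$ into a separable metrizable space extends to $\beta f\colon\beta X\to cM'$, where $cM'=\overline{f(X)}$ is a metrizable compactification of $M'$; since $cM'$ is metrizable, its compact (hence closed $G_\delta$, hence zero-) subsets pull back along $\beta f$ to zero-sets of $\beta X$. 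Under this dictionary the $z$-Porada game on $\beta X$, in which the compacta $K_n$ played are required to be zero-sets of $\beta X$, matches the ordinary $k$-Porada game on $cM$, and Theorem \ref{thrm_Telgarsky} identifies the latter with the (non-)Menger property of $M'$. Thus we prove: Player I has a winning strategy in $zP(\beta X,\beta X\setminus X)$ if and only if some separable metrizable continuous image of $X$ fails to be Menger; the Proposition is the contrapositive.

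For the direction from a non-Menger image, suppose $M'=f(X)$ is separable metrizable and not Menger. By Theorem \ref{thrm_Telgarsky} applied to $M'\subseteq cM'$, Player I has a winning strategy $\tau$ in $kP(cM',cM'\setminus M')$. We lift $\tau$ along $\beta f$: whenever $\tau$ prescribes $(K,U)$ with $K\subseteq cM'\setminus M'$, Player I plays $\bigl((\beta f)^{-1}(K),(\beta f)^{-1}(U)\bigr)$. Here $(\beta f)^{-1}(K)$ is a nonempty compact subset of $(\beta f)^{-1}(cM'\setminus M')\subseteq\beta X\setminus X$, and, $K$ being a zero-set of $cM'$, it is a zero-set of $\beta X$, so the move is legal in $zP$. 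When Player II answers with an open $V\subseteq\beta X$, we convert it to the move $cM'\setminus\beta f(\beta X\setminus V)$ for $\tau$; this is open because $\beta f$ is a closed map, and it lies between $K$ and $U$. Since $(\beta f)^{-1}$ commutes with intersections and $\beta f$ is onto, $\bigcap_n(\beta f)^{-1}(U_n)=(\beta f)^{-1}\bigl(\bigcap_nU_n\bigr)$ is empty exactly when $\bigcap_nU_n=\emptyset$, and meets $X$ as soon as $\bigcap_nU_n$ meets $M'=f(X)$. As $\tau$ is winning, in every run $\bigcap_nU_n$ is empty or meets $M'$; hence the lifted play wins for Player I in $zP(\beta X,\beta X\setminus X)$.

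For the converse, let $\sigma$ be a winning strategy for Player I in $zP(\beta X,\beta X\setminus X)$, and confine a run of $\sigma$ to one metrizable image. By a closing-off (equivalently, countable elementary submodel) argument we build a countable $\mathcal F\subseteq C(\beta X,[0,1])$ closed under $\sigma$ in the following sense: after replacing Player I's open sets by $\mathcal F$-cozero sets lying between $K_n$ and $U_n$ (harmless, since Player II's ensuing moves still form a legal run of $\sigma$ with the same outcome $\bigcap_nV_n$), every compact $K_n$ output by $\sigma$ is a zero-set defined by members of $\mathcal F$, and $\sigma$'s responses depend on Player II's moves only through their $g$-trace. At each of the countably many stages one adjoins countably many functions—those defining the next zero-set compacta over a countable base of $\mathcal F$-cozero answers of Player II—so $\mathcal F$ stays countable. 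Put $cM=g(\beta X)$ and $M'=g(X)$. The decisive point is that each $K_n$ is a zero-set in the remainder, say $K_n=\zeta_n^{-1}(0)$ with $\zeta_n\in\mathcal F$; then $\zeta_n>0$ on $X$, so $g(K_n)\cap g(X)=\emptyset$, i.e. $g(K_n)\subseteq cM\setminus M'$. Hence $\sigma$ projects to a strategy for Player I in $kP(cM,cM\setminus M')$: she plays $\bigl(g(K_n),\bar U_n\bigr)$ with $g^{-1}(\bar U_n)=U_n$, and Player II's open answers in $cM$ are pulled back to $\mathcal F$-cozero answers in $\beta X$. Surjectivity of $g$ gives $\bigcap_n\bar U_n=\emptyset$ iff $\bigcap_nU_n=\emptyset$, and $\bigcap_n\bar U_n$ meets $M'$ whenever $\bigcap_nU_n$ meets $X$; as $\sigma$ is winning, so is the projected strategy. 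By Theorem \ref{thrm_Telgarsky}, $M'$ is not Menger, so $X$ is not projectively Menger.

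The main obstacle will be the closing-off construction in the last paragraph: one must simultaneously arrange that Player I's (a priori arbitrary) open moves are taken $\mathcal F$-cozero, that every compact she plays is an $\mathcal F$-zero-set, and that $\sigma$ factors through the countably many $\mathcal F$-cozero answers of Player II, all while keeping $\mathcal F$ countable and preserving the winning condition under $g$. The zero-set constraint on the compacta played by Player I is precisely what makes this work, since it forces each $K_n$ to be separated from the dense set $X$ by a function of $\mathcal F$, so that $g(K_n)$ lands in the remainder $cM\setminus M'$; without this constraint—as the one-point remainder of $\beta\omega_1$ shows—a winning strategy for Player I need not yield any non-Menger metrizable image.
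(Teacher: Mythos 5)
The paper itself contains no proof of Proposition \ref{proposition proj Menger}: it is imported from the reference \cite{KK}, so your argument has to be judged on its own terms rather than against an in-paper proof. Judged so, it is essentially correct, and its architecture (reduce to Theorem \ref{thrm_Telgarsky} by factoring through separable metrizable images) is the natural one. Your reading of the $z$-Porada game --- that it is player I's compacta, not player II's, that must be zero-sets (the paper's wording is a misprint, as the use of the game in the proof of Theorem \ref{thrm proj Menger} confirms) --- is the intended one. The lifting direction is complete: preimages under $\beta f$ of compact subsets of the metrizable compactification $cM'$ are zero-sets of $\beta X$ disjoint from $X$, the conversion $V\mapsto cM'\setminus\beta f(\beta X\setminus V)$ produces legal moves for $\tau$, and surjectivity of $\beta f$ transfers the winning condition both ways. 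The projection direction contains the two decisive ideas: confining a winning strategy $\sigma$ to a countable family $\mathcal{F}\subseteq C(\beta X,[0,1])$ by closing off, and the observation that the zero-set constraint forces $g(K_n)\cap g(X)=\emptyset$ (the witness $\zeta_n$ vanishes on $K_n$, is positive on $X$, and lies in $\mathcal{F}$), so the projected compacta really land in the remainder $cM\setminus M'$. Your $\omega_1$ example showing this constraint cannot be dropped is also correct: $\{\omega_1\}$ is not $G_\delta$ in $\omega_1+1$, so player I has no legal move in the $z$-game, while she wins the $k$-game trivially.

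The one step you compress past the point of correctness is the passage from the closed-off tree to an actual strategy in $kP(cM,cM\setminus M')$. In that game player II may play an arbitrary open $\bar V\subseteq cM$, whereas your closing-off only guarantees that $\sigma$'s responses to the \emph{countably many} basic ($\mathcal{F}$-cozero) answers have their defining functions in $\mathcal{F}$. If player I answered by feeding $\sigma$ the raw pullback $g^{-1}(\bar V)$ of a non-basic move --- which is what ``Player II's open answers in $cM$ are pulled back to $\mathcal{F}$-cozero answers'' literally says --- then $\sigma$'s response would be a zero-set whose witnessing function need not belong to $\mathcal{F}$, and nothing would then prevent its $g$-image from meeting $M'$, destroying legality of the projected move. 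The standard repair, which your phrase ``over a countable base of answers'' gestures at but never executes: when player II plays $\bar V$, player I internally replaces it by a basic $\bar B$ with $g(K_n)\subseteq\bar B\subseteq\bar V$ and follows the closed-off tree; since then $\bar U_{n+1}\subseteq\bar B_n\subseteq\bar V_n\subseteq\bar U_n$, the actual and the internal play have the same intersection $\bigcap_n\bar V_n=\bigcap_n\bar B_n$, so a win for player II against the projected strategy pulls back to a win against $\sigma$, a contradiction. With this interpolation device stated explicitly (and the analogous bookkeeping making the recursion through increasing families $\mathcal{F}_0\subseteq\mathcal{F}_1\subseteq\cdots$ precise), your proof is complete.
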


Let us also note the following simple fact:

\begin{lemma}\label{upgrading compact to zero}
Let $Z$ be a compact space. If $L\subseteq G$ where $L$ is compact and $G$ is a $G_\delta$-subset of $Z$, then there exists a zero-set $L'$ such that
$L\subseteq L'\subseteq G$
\end{lemma}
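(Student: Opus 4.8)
The plan is to express $G$ as a countable intersection of open sets and, using the normality of the compact space $Z$, to trap $L$ inside a zero-set contained in each of these open sets; the desired $L'$ will then be the intersection of these zero-sets.

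First I would write $G=\bigcap_{n=1}^\infty U_n$, where each $U_n$ is open in $Z$. Since $L\subseteq G\subseteq U_n$ and $Z$ is compact Hausdorff, hence normal, the disjoint closed sets $L$ and $Z\setminus U_n$ can be separated by Urysohn's lemma: there is a continuous $f_n:Z\to[0,1]$ with $f_n|_L\equiv 0$ and $f_n|_{Z\setminus U_n}\equiv 1$. Setting $Z_n=f_n^{-1}(0)$ produces a zero-set satisfying $L\subseteq Z_n\subseteq U_n$ (the left inclusion because $f_n$ vanishes on $L$, the right because any point outside $U_n$ takes the value $1\neq 0$).

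Next I would put $L'=\bigcap_{n=1}^\infty Z_n$. Then $L\subseteq L'$ because $L$ lies in every $Z_n$, and $L'\subseteq\bigcap_{n=1}^\infty U_n=G$, so the sandwich $L\subseteq L'\subseteq G$ holds. It remains only to check that $L'$ is itself a zero-set, which is the single point requiring a small argument: a countable intersection of zero-sets is again a zero-set. Concretely, the function $g=\sum_{n=1}^\infty 2^{-n}f_n$ converges uniformly, hence is continuous, takes values in $[0,1]$, and satisfies $g^{-1}(0)=\bigcap_{n=1}^\infty f_n^{-1}(0)=L'$; thus $L'$ is the zero-set of $g$, completing the construction.

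I do not foresee a genuine obstacle here. The only step needing care is the closure of zero-sets under countable intersection, and this is dispatched by the uniformly convergent series defining $g$; everything else is a direct application of Urysohn's lemma in the normal space $Z$.
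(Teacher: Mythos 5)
Your proof is correct, and it is exactly the standard argument the paper has in mind (the paper explicitly omits the proof as obvious): Urysohn functions $f_n$ separating $L$ from $Z\setminus U_n$, followed by the uniformly convergent sum $g=\sum_n 2^{-n}f_n$ whose zero-set is $\bigcap_n f_n^{-1}(0)$. One could equally well finish by citing Vedenissov's lemma (stated earlier in the paper) after observing that $\bigcap_n f_n^{-1}(0)$ is a closed $G_\delta$-set, but your direct construction of $g$ is self-contained and complete.
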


We omit the obvious proof of the above lemma.

\section{The support map}

Let
$\varphi:C_p(X)\to C_p(Y)$ be continuous and linear.
For $y\in Y$ we define \textit{the support of $y$ with respect to $\varphi$} as the set $\supp_\varphi(y)$ of all $x\in X$ satisfying the
condition that for every neighborhood $U$ of $x$, there is $f\in C_p(X)$ such that
$f[X\setminus U]\subseteq \{0\}$ and $\varphi(f)(y)\neq 0$ (see \cite{A}, \cite[\S 6.8]{vM}).

The following fact is well known (see \cite[Lemma 6.8.1]{vM} and \cite[Lemma 6.8.2]{vM})
\begin{lemma}\label{Lemma_supp}
Let $\varphi:C_p(X)\to C_p(Y)$ be continuous and linear. Then
\begin{enumerate}
    \item $\supp_\varphi(y)$ a finite subset of $X$.
    \item If $f\in C_p(X)$ satisfies $f[\supp_\varphi(y)]\subseteq\{0\}$ then $\varphi(f)(y)=0$.
    \item If $\varphi$ is surjective then $\supp_\varphi(y)\neq\emptyset$ for every $y\in Y$.
    \item The multivalued map $y\mapsto \supp_\varphi(y)$ is lower semi-continuous.
\end{enumerate}
\end{lemma}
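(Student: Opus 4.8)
The plan is to prove the four parts of Lemma~\ref{Lemma_supp} in order, since each relies on the definition of $\supp_\varphi(y)$ and the earlier parts.

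\textbf{Part (1).} First I would show $\supp_\varphi(y)$ is finite. The key mechanism is continuity of $\varphi$ together with the fact that $\varphi(f)(y)$ depends linearly on $f$. By continuity of $\varphi$ at $0$ and of evaluation at $y$, there is a basic neighborhood of $0$ in $C_p(X)$, determined by finitely many points $x_1,\dots,x_n\in X$ and an $\varepsilon>0$, such that $|f(x_i)|<\varepsilon$ for all $i$ forces $|\varphi(f)(y)|<1$. By linearity this upgrades to the implication $f(x_1)=\cdots=f(x_n)=0\Rightarrow\varphi(f)(y)=0$ (rescale $f$ to make the $|f(x_i)|$ arbitrarily small). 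I would then argue that every point of $\supp_\varphi(y)$ must lie in $\{x_1,\dots,x_n\}$: if $x\notin\{x_1,\dots,x_n\}$, choose a neighborhood $U$ of $x$ missing the $x_i$; any $f$ vanishing off $U$ vanishes at each $x_i$, hence $\varphi(f)(y)=0$, so $x\notin\supp_\varphi(y)$. Thus $\supp_\varphi(y)\subseteq\{x_1,\dots,x_n\}$ is finite.

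\textbf{Part (2).} This is essentially the implication just extracted. The support $\supp_\varphi(y)=\{x_1,\dots,x_k\}$ is finite, and I would show the "annihilating set" $\{x_1,\dots,x_n\}$ from Part (1) can be taken to be exactly $\supp_\varphi(y)$. The content is that if $f[\supp_\varphi(y)]\subseteq\{0\}$ then $\varphi(f)(y)=0$. I would prove the contrapositive via a partition-of-unity / separation argument: given $f$ with $\varphi(f)(y)\neq0$, decompose $f$ using a finite open cover separating the finitely many annihilating points outside the support, writing $f$ as a sum of functions each supported near a single point, and use linearity of $f\mapsto\varphi(f)(y)$ to locate a point witnessing the support condition. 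The precise bookkeeping here is the main routine obstacle, but conceptually it is a standard localization using Tychonoff separation to build test functions.

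\textbf{Part (3).} For surjective $\varphi$, suppose toward contradiction $\supp_\varphi(y)=\emptyset$ for some $y$. By Part (2) (with empty support, every $f$ satisfies the hypothesis vacuously), we would get $\varphi(f)(y)=0$ for all $f\in C_p(X)$. But then the constant function $g\equiv1$ on $Y$ cannot be in the range of $\varphi$ (its value at $y$ is $1\neq0$), contradicting surjectivity. So $\supp_\varphi(y)\neq\emptyset$.

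\textbf{Part (4).} Finally, lower semi-continuity of $y\mapsto\supp_\varphi(y)$: I must show that for open $U\subseteq X$, the set $\{y:\supp_\varphi(y)\cap U\neq\emptyset\}$ is open. I would fix $y_0$ with some $x_0\in\supp_\varphi(y_0)\cap U$, produce by the support definition a function $f$ vanishing off $U$ with $\varphi(f)(y_0)\neq0$, and then use continuity of $\varphi(f)$ on $Y$ to find a neighborhood $W$ of $y_0$ where $\varphi(f)\neq0$. For any $y\in W$, the same $f$ (vanishing off $U$) has $\varphi(f)(y)\neq0$, which by Part (2) forces $\supp_\varphi(y)\cap U\neq\emptyset$ (otherwise $f$ would vanish on $\supp_\varphi(y)$ and give $\varphi(f)(y)=0$). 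Hence the set is open. The step I expect to be most delicate is the careful construction of the localized test functions in Part~(2); everything else follows cleanly from continuity, linearity, and Tychonoff separation.
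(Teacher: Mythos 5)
Your proof is correct, and it is essentially the argument behind the paper's treatment of this lemma: the paper does not prove it but cites van Mill's book (Lemmas 6.8.1 and 6.8.2), where the same scheme is used — continuity at $\underline{0}$ plus linearity yields a finite annihilating set containing $\supp_\varphi(y)$, and the remaining parts follow by localization with Urysohn functions exactly as you outline. The bookkeeping you deferred in Part (2) does close routinely: letting $\{x_1,\ldots,x_n\}$ be the annihilating set, choose pairwise disjoint open sets $U_i$ around the points $x_i\notin\supp_\varphi(y)$, each contained in an annihilating neighborhood and missing $\supp_\varphi(y)$, take continuous $p_i:X\to[0,1]$ with $p_i(x_i)=1$ and $p_i$ vanishing off $U_i$, and write $f=g+\sum_i f(x_i)p_i$; then $g$ vanishes at every $x_j$ (at points outside the support because $p_i(x_j)=\delta_{ij}$, at points of the support because there $f=0$ and all $p_i=0$), so every summand is killed by $f\mapsto\varphi(f)(y)$ and hence $\varphi(f)(y)=0$.
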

Let $\overline{\mathbb{R}}=\mathbb{R}\cup\{\infty\}$ be the one-point compactification of $\RR$ and let $Z$ be a Tychonoff space.
For a function $f\in C_p(Z)$ the function $\widetilde{f}:\beta Z\to \overline{\RR}$ is the continuous extension of $f$ over the \v{C}ech-Stone
compactification $\beta Z$ of $Z$ (i.e., $\widetilde{f}$ is continuous and $\widetilde{f}\upharpoonright Z=f$).
Since the addition is not defined for all pairs of points in $\overline{\mathbb{R}}$, the sum of two functions
$\widetilde{f}$ and $\widetilde{g}$ may not be well defined. However, we have the following:

\begin{lemma}\label{f+g}
 Let $r_1,\ldots ,r_n\in C_p(Z)$ be a finite collection of continuous functions and let $z\in \beta Z$. Let $r=r_1+\dotsb +r_n$. If for every $i\leq n$,
 $\widetilde{r_i}(z)\in \mathbb{R}$, then $\widetilde{r}(z)=\widetilde{r_1}(z)+\dotsb +\widetilde{r_n}(z)$.
\end{lemma}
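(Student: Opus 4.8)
The statement asserts that if $r = r_1 + \dotsb + r_n$ in $C_p(Z)$ and each extension $\widetilde{r_i}$ takes a \emph{finite} (real) value at a point $z \in \beta Z$, then the extension $\widetilde{r}$ of the sum also takes the value $\widetilde{r_1}(z) + \dotsb + \widetilde{r_n}(z)$ at $z$. The subtlety, as the paper itself flags, is that the extensions live in the compact space $\overline{\RR} = \RR \cup \{\infty\}$, where addition is not everywhere defined, so one cannot simply say ``the extension of a sum is the sum of the extensions'' — that manipulation is only legitimate where the summands stay finite. My plan is to exploit exactly this: near $z$, all the $\widetilde{r_i}$ are close to real values, hence bounded, so on a neighborhood of $z$ the functions behave like genuine real-valued continuous maps and the algebra goes through by continuity.

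First I would reduce to the case $n = 2$ by an obvious induction: once $\widetilde{r_1 + r_2}(z) = \widetilde{r_1}(z) + \widetilde{r_2}(z)$ is established whenever both values are finite, I can group $r = (r_1 + \dotsb + r_{n-1}) + r_n$, apply Lemma~\ref{f+g} for $n-1$ to see that $r_1 + \dotsb + r_{n-1}$ has finite extension $\widetilde{r_1}(z) + \dotsb + \widetilde{r_{n-1}}(z)$ at $z$, and then apply the two-function case. So the heart is $n = 2$.

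For $n = 2$, the key step is to produce a neighborhood $W$ of $z$ in $\beta Z$ on which both $\widetilde{r_1}$ and $\widetilde{r_2}$ avoid $\infty$ and in fact stay inside a common bounded real interval. Since $\widetilde{r_i}(z) \in \RR$ and $\overline{\RR}$ is Hausdorff, I can pick disjoint neighborhoods of the finite value $\widetilde{r_i}(z)$ and of $\infty$; pulling back through the continuous $\widetilde{r_i}$ and intersecting over $i = 1, 2$ gives an open $W \ni z$ with $\widetilde{r_i}(W) \subseteq (a_i, b_i) \subseteq \RR$ for real bounds $a_i, b_i$. On $W \cap Z$ (which is dense in $W$ because $Z$ is dense in $\beta Z$) we have the genuine identity $r(w) = r_1(w) + r_2(w)$ of real-valued functions, and on all of $W$ the map $w \mapsto \widetilde{r_1}(w) + \widetilde{r_2}(w)$ is a well-defined continuous $\RR$-valued function (addition is continuous on the bounded region $(a_1, b_1) \times (a_2, b_2)$). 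Thus the two continuous maps $\widetilde{r}\!\restriction_W$ and $(\widetilde{r_1} + \widetilde{r_2})\!\restriction_W$ agree on the dense subset $W \cap Z$ and hence agree on $W$; evaluating at $z$ finishes the proof.

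The only point requiring a little care — and what I would flag as the main obstacle — is justifying that $\widetilde{r}(z)$ is itself finite, so that the claimed equality is an equality of real numbers rather than a vacuous or ill-typed statement. This is precisely what the boundedness of $\widetilde{r_1} + \widetilde{r_2}$ on $W$ delivers: since $r = r_1 + r_2$ on the dense set $W \cap Z$ and $r_1 + r_2$ is bounded there by $b_1 + b_2$, the continuous extension $\widetilde{r}$ is bounded by the same constant on $W$, hence $\widetilde{r}(z) \neq \infty$. Everything else is routine density-plus-continuity, so no further calculation is needed.
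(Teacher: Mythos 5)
Your proof is correct and follows essentially the same route as the paper: both arguments find an open neighborhood $W$ of $z$ on which every $\widetilde{r_i}$ stays real-valued (hence the pointwise sum is a well-defined continuous real function on $W$), and then conclude by density of $Z\cap W$ in $W$ together with continuity into the Hausdorff space $\overline{\RR}$. Your reduction to $n=2$ and the closing remark about finiteness of $\widetilde{r}(z)$ are harmless but unnecessary — the paper treats all $n$ at once, and finiteness is automatic since the density argument identifies $\widetilde{r}(z)$ with the real number $\widetilde{r_1}(z)+\dotsb+\widetilde{r_n}(z)$.
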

\begin{proof}
 For $i\leq n$ define $W_i=\{x\in \beta Z:|\widetilde{r_i}(x)-\widetilde{r_i}(z)|<1\}$. Note that this set is well defined because
 $\widetilde{r_i}(z)\in \mathbb{R}$. It is
 also open by continuity of $\widetilde{r_i}$.
 The set $W=\bigcap_{i=1}^n W_i$ is an open neighborhood of $z$ in $\beta Z$, and for every $x\in W$ the quantity $\widetilde{r_1}(x)+\dotsb +\widetilde{r_n}(x)$
 is a well-defined real number. Thus, $(\widetilde{r_1}+\dotsb +\widetilde{r_n})\upharpoonright W$ is a well-defined continuous function on $W$. Since
 $r(x)=r_1(x)+\dotsb +r_n(x)$ for $x\in Z$ and $Z$ is dense in $W$, we must have $\widetilde{r}(x)=\widetilde{r_1}(x)+\dotsb +\widetilde{r_n}(x)$, for $x\in W$.
 In particular, $\widetilde{r}(z)=\widetilde{r_1}(z)+\dotsb +\widetilde{r_n}(z)$.
\end{proof}

It will be convenient to introduce the following definition:
\begin{defin}
Let $\varphi: C_p(X)\to C_p(Y)$ be a linear continuous map and let $y\in Y$.
An open set $U\subseteq \beta X$ is called \textit{$y$-effective} if every function $f\in C_p(X)$ such that $f[X\setminus U]\subseteq \{0\}$ satisfies $\widetilde{\varphi(f)}(y)=0$.
An open set $U\subseteq \beta X$ is called \textit{$y$-ineffective} if it is not $y$-effective.
\end{defin}

For $y\in \beta Y$ we set
$$s_{\varphi}(y)=\{x\in \beta X: \text{every open neighborhood of $x$ is $y$-ineffective}\}.$$

\begin{xrem}
We should point out that the idea of considering the set $s_\varphi(y)$ is not new. The same concept
(for spaces of bounded continuous functions) was used e.g., by Valov in \cite{V1} and \cite{V2}.
\end{xrem}

Directly from the definition we get:
\begin{lemma}\label{lemma_closed}
The set $s_{\varphi}(y)$ is closed in $\beta X$, hence it is compact.
\end{lemma}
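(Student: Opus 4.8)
The plan is to show directly that the complement $\beta X\setminus s_\varphi(y)$ is open, which immediately gives that $s_\varphi(y)$ is closed in the compact space $\beta X$, hence compact. Unwinding the definition, a point $x\in\beta X$ fails to belong to $s_\varphi(y)$ precisely when $x$ has \emph{some} open neighborhood that is $y$-effective (i.e.\ not $y$-ineffective). So the complement is exactly the union of all $y$-effective open sets, and the key observation I would isolate is that $y$-effectiveness is inherited downward: if $U$ is $y$-effective and $W\subseteq U$ is open, then $W$ is $y$-effective as well. This monotonicity is what makes the complement open.

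First I would verify this monotonicity. Suppose $U$ is $y$-effective and $W\subseteq U$ is open. Take any $f\in C_p(X)$ with $f[X\setminus W]\subseteq\{0\}$. Since $X\setminus U\subseteq X\setminus W$, we also have $f[X\setminus U]\subseteq\{0\}$, and because $U$ is $y$-effective this forces $\widetilde{\varphi(f)}(y)=0$. Hence every such $f$ vanishes at $y$ after applying $\varphi$ and extending, which is exactly the statement that $W$ is $y$-effective.

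With monotonicity in hand, I would finish as follows. Let $x\in\beta X\setminus s_\varphi(y)$. By the definition of $s_\varphi(y)$, there is an open neighborhood $U$ of $x$ that is $y$-effective. I claim $U\subseteq\beta X\setminus s_\varphi(y)$: indeed, for any $x'\in U$, the set $U$ is itself an open neighborhood of $x'$, and it is $y$-effective, so $x'$ cannot satisfy the defining condition of $s_\varphi(y)$ (which demands that \emph{every} neighborhood be $y$-ineffective). Thus $x'\notin s_\varphi(y)$, giving $U\subseteq\beta X\setminus s_\varphi(y)$. Therefore every point of the complement has a neighborhood contained in the complement, so $\beta X\setminus s_\varphi(y)$ is open and $s_\varphi(y)$ is closed. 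Since $\beta X$ is compact and closed subsets of compact spaces are compact, $s_\varphi(y)$ is compact.

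I do not expect any serious obstacle here; the lemma is genuinely immediate once the definitions are unwound, and the monotonicity remark (downward closure of $y$-effectiveness) does all the work. The only point requiring a little care is keeping the quantifiers straight between ``$y$-effective,'' ``$y$-ineffective,'' and the ``every neighborhood'' in the definition of $s_\varphi(y)$; the logical structure is that $x\notin s_\varphi(y)$ iff \emph{some} neighborhood of $x$ is $y$-effective, and I would state this equivalence explicitly to avoid confusion. Note that this argument does not use Lemma~\ref{f+g} or any property of $\varphi$ beyond the definition of $y$-effectiveness, which is appropriate for such a preparatory lemma.
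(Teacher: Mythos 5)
Your proof is correct and matches the paper's approach: the paper states this lemma with no proof at all ("Directly from the definition we get"), and your argument is precisely the routine unwinding of the definition that the author had in mind, since the complement of $s_\varphi(y)$ is the union of all $y$-effective open sets. One small remark: your monotonicity observation, while true, is never actually used in your final argument --- for $x'\in U$ you take $U$ itself as the $y$-effective neighborhood of $x'$, so the proof is complete without it.
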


\begin{lemma}\label{lemma_s=supp}
If $y\in Y$ then $s_{\varphi}(y)=\supp_\varphi(y)$.
\end{lemma}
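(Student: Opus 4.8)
The plan is to prove the two inclusions separately, relying throughout on two elementary observations. First, since $y\in Y$, the extension $\widetilde{\varphi(f)}$ agrees with $\varphi(f)$ at $y$, so $\widetilde{\varphi(f)}(y)=\varphi(f)(y)$ for every $f\in C_p(X)$; this lets me pass freely between the ``$y$-effective/$y$-ineffective'' language from the definition of $s_\varphi(y)$ and the condition ``$\varphi(f)(y)\neq 0$'' appearing in the definition of $\supp_\varphi(y)$. Second, the open neighborhoods of a point $x\in X$ in $X$ are exactly the traces $U\cap X$ of open neighborhoods $U$ of $x$ in $\beta X$, and the constraint $f[X\setminus U]\subseteq\{0\}$ depends only on $U\cap X$, since $X\setminus U=X\setminus(U\cap X)$. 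These two facts reduce the comparison of the two sets to a matching of quantifiers.

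For the inclusion $\supp_\varphi(y)\subseteq s_\varphi(y)$, I would take $x\in\supp_\varphi(y)$ (so $x\in X$) and an arbitrary open neighborhood $U$ of $x$ in $\beta X$. Then $U\cap X$ is an open neighborhood of $x$ in $X$, so the definition of the support yields $f\in C_p(X)$ with $f[X\setminus(U\cap X)]\subseteq\{0\}$ and $\varphi(f)(y)\neq 0$. By the first observation, this same $f$ witnesses that $U$ is $y$-ineffective; as $U$ was arbitrary, $x\in s_\varphi(y)$.

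The reverse inclusion $s_\varphi(y)\subseteq\supp_\varphi(y)$ carries the real content, and it is here that I must also exclude points of the remainder $\beta X\setminus X$. I would use Lemma \ref{Lemma_supp}(1) to record that $S:=\supp_\varphi(y)$ is a finite subset of $X$, hence closed in the compact Hausdorff space $\beta X$. Suppose $x\in\beta X$ with $x\notin S$. By Hausdorffness I can choose an open $U\ni x$ in $\beta X$ with $U\cap S=\emptyset$; then $S\subseteq\beta X\setminus U$, and since $S\subseteq X$ we get $S\subseteq X\setminus U$. Consequently any $f\in C_p(X)$ with $f[X\setminus U]\subseteq\{0\}$ also satisfies $f[S]\subseteq\{0\}$, whence $\varphi(f)(y)=0$ by Lemma \ref{Lemma_supp}(2), i.e. $\widetilde{\varphi(f)}(y)=0$. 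Thus $U$ is $y$-effective, so $x\notin s_\varphi(y)$. Contrapositively, $s_\varphi(y)\subseteq S=\supp_\varphi(y)$; in particular every point of $s_\varphi(y)$ lies in $X$.

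The main obstacle is precisely this second inclusion, and specifically the need to rule out the remainder: a priori $s_\varphi(y)$ is a subset of $\beta X$, whereas $\supp_\varphi(y)\subseteq X$, so equality forces $s_\varphi(y)\cap(\beta X\setminus X)=\emptyset$. The finiteness of the support from Lemma \ref{Lemma_supp}(1) is exactly what makes $S$ closed in $\beta X$ and permits the separating neighborhood $U$, while Lemma \ref{Lemma_supp}(2) converts ``$f$ vanishes outside $U$'' into ``$\varphi(f)(y)=0$''. The degenerate case $S=\emptyset$ is absorbed by the same argument: then $\varphi(f)(y)=0$ for every $f$ by Lemma \ref{Lemma_supp}(2), so every open set is $y$-effective and both sets are empty.
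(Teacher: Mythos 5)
Your proof is correct and follows essentially the same route as the paper: the inclusion $\supp_\varphi(y)\subseteq s_\varphi(y)$ is the easy direction (the paper just calls it clear), and for the reverse inclusion both arguments use finiteness of $\supp_\varphi(y)$ (Lemma \ref{Lemma_supp}(1)) to separate a point $x\notin\supp_\varphi(y)$ by an open $U\subseteq\beta X$ disjoint from the support, then apply Lemma \ref{Lemma_supp}(2) to see that $U$ is $y$-effective. Your write-up is merely more explicit about the bookkeeping (the identity $\widetilde{\varphi(f)}(y)=\varphi(f)(y)$ for $y\in Y$, the trace of neighborhoods on $X$, and the empty-support case), which the paper leaves implicit.
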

\begin{proof}
The inclusion $\supseteq$ is clear. Suppose that there is $x\in s_{\varphi}(y)\setminus \supp_\varphi(y)$.
Since $\supp_\varphi(y)$ is finite (see Lemma \ref{Lemma_supp}), there is an open neighborhood $U$ of $x$ in $\beta X$ such that $U\cap \supp_\varphi(y)=\emptyset$.
Let $f\in C_p(X)$ be such that $f[X\setminus U]\subseteq\{0\}$. Then $f[\supp_\varphi (y)]\subseteq \{0\}$ and hence $\varphi(f)(y)=0$,
by Lemma \ref{Lemma_supp}. This means that $U\ni x$ is $y$-effective, contradicting $x\in s_{\varphi}(y)$.
\end{proof}
\begin{lemma}\label{lemma_effective}
Let $\varphi:C_p(X)\to C_p(Y)$ be a linear continuous map. Let $y\in \beta Y$ and
let $U$ be an open set in $\beta X$ such that $s_{\varphi}(y)\subseteq U$.
If $f\in C_p(X)$ satisfies $f[U\cap X]\subseteq\{0\}$ then $\widetilde{\varphi(f)}(y)=0$.
\end{lemma}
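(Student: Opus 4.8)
The plan is to exploit the compactness of the complement $K=\beta X\setminus U$ and to reduce the statement to the defining property of $y$-effective sets by means of a partition-of-unity decomposition of $f$. Note first that $K$ is compact, being closed in the compact space $\beta X$, and that $K$ is disjoint from $s_\varphi(y)$ since $s_\varphi(y)\subseteq U$. Hence every point $x\in K$ lies outside $s_\varphi(y)$, which by the very definition of $s_\varphi(y)$ means that $x$ possesses some open neighborhood $W_x$ in $\beta X$ that is \emph{not} $y$-ineffective, i.e.\ that is $y$-effective. Applying compactness of $K$ to the cover $\{W_x:x\in K\}$, I would extract a finite subcover $W_1,\ldots,W_n$ of $K$ consisting of $y$-effective open subsets of $\beta X$.

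The next step is to decompose $f$ accordingly. Since $\beta X$ is compact, hence normal, and $K\subseteq\bigcup_{i=1}^n W_i$, there is a partition of unity $h_1,\ldots,h_n:\beta X\to[0,1]$ subordinate to $\{W_i\}$, so that each $h_i$ vanishes outside $W_i$ and $\sum_{i=1}^n h_i=1$ on $K$. I would then set $f_i=f\cdot(h_i\upharpoonright X)\in C_p(X)$. The key verification is the identity $f=\sum_{i=1}^n f_i$ on all of $X$: on $K\cap X$ this holds because $\sum_i h_i=1$ there, while on $U\cap X$ both sides vanish because $f[U\cap X]\subseteq\{0\}$ by hypothesis; since $X$ is the disjoint union of $K\cap X$ and $U\cap X$, the identity holds throughout $X$. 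Moreover, each $f_i$ vanishes on $X\setminus W_i$ (because $h_i$ does), i.e.\ $f_i[X\setminus W_i]\subseteq\{0\}$, so the $y$-effectiveness of $W_i$ yields $\widetilde{\varphi(f_i)}(y)=0$.

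Finally I would pass through the \v{C}ech--Stone extensions. By linearity of $\varphi$ we have $\varphi(f)=\sum_{i=1}^n\varphi(f_i)$ in $C_p(Y)$, and since each value $\widetilde{\varphi(f_i)}(y)=0$ is a real number, Lemma \ref{f+g} (applied with $Z=Y$ and $z=y$) gives $\widetilde{\varphi(f)}(y)=\sum_{i=1}^n\widetilde{\varphi(f_i)}(y)=0$, which is exactly the desired conclusion.

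I expect the only genuinely delicate point to be the bookkeeping around the one-point compactification $\overline{\mathbb{R}}$: the extended functions $\widetilde{\varphi(f_i)}$ take values in $\overline{\mathbb{R}}$, where addition is not everywhere defined, so one cannot naively add them at the point $y$. This is precisely why Lemma \ref{f+g} is invoked, and why it is essential that each summand evaluates to a finite (here, zero) value at $y$. Everything else---the finite subcover, the partition of unity, and the equality $f=\sum_i f_i$---is routine, with the single mild observation that the decomposition need only be exact on $X$ rather than on all of $\beta X$, which is guaranteed by the fact that $f$ already vanishes on $U\cap X$.
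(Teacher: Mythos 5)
Your proposal is correct and takes essentially the same route as the paper: the paper likewise covers the compact set $\beta X\setminus U$ by finitely many $y$-effective open sets, builds by hand (via bump functions $g_i$ and the normalization $g_i/(g_1+\dotsb+g_n)$) exactly the partition of unity you invoke, decomposes $f$ as the corresponding finite sum of functions each vanishing outside a $y$-effective set, and concludes with Lemma \ref{f+g}. The only difference is that you cite the standard partition-of-unity theorem for normal spaces where the paper constructs one explicitly.
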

\begin{proof}
Fix $f\in C_p(X)$ with $f[U\cap X]\subseteq\{0\}$. We have $s_{\varphi}(y)\subseteq U$, so if $x\in\beta X\setminus U$ then there exists an open neighborhood $U_x$ of $x$ in $\beta X$ which is $y$-effective. For each $x\in\beta X\setminus U$ let $V_x$ be an open neighborhood of $x$ in $\beta X$ satisfying $V_x\subseteq \overline{V_x}\subseteq U_x$. The family $\{V_x:x\in \beta X\setminus U\}$ covers the compact set $\beta X\setminus U$. Let $\{V_{x_1},\ldots ,V_{x_n}\}$ be its finite subcover. Let
$$F=\overline{V_{x_1}}\cup\dotsb \cup\overline{V_{x_n}}.$$

For $i=1,\ldots , n$, let $g_i:\beta X\to [0,1]$ be a continuous function that satisfies
    $$g_i[\overline{V_{x_i}}]=\{1\}\quad \text{and}\quad g_i[\beta X\setminus U_{x_i}]=\{0\}.$$
%Define $g_i'=g_i\upharpoonright F$.

For $i=1,\ldots , n$ there exists a function $f_i\in C_p(\beta X)$ such that
\begin{align*}
f_i(x)=
\begin{cases}
\frac{g_i(x)}{g_1(x)+\dotsb +g_n(x)}&\quad \text{for}\;x\in F\\
0&\quad \text{for}\;x\in\beta X\setminus (U_{x_1}\cup\dotsb \cup U_{x_n})
\end{cases}
\end{align*}
Let $h_i=f\cdot(f_i\upharpoonright X)$ be the product of the functions $f$ and $f_i\upharpoonright X$.
The function $h_i$ has the following property:
\begin{equation}\label{condition*}
\text{If }\; x\in X\setminus U_{x_i}\; \text{ then }\; h_i(x)=0. \tag{$\ast$}
\end{equation}
Indeed, if $x\in F\setminus U_{x_i}$ then $f_i(x)=g_i(x)/(g_1(x)+\dotsb g_n(x))=0$, because $g_i(x)=0$ for $x\notin U_{x_i}$. Hence $h_i(x)=f(x)\cdot f_i(x)=0$. On the other hand, if $x\notin F$ then $x\in U$ so $f(x)=0$. Hence $h_i(x)=f(x)\cdot f_i(x)=0$ too.

Each set $U_{x_i}$ is $y$-effective, thus $\widetilde{\varphi(h_i)}(y)=0$, for every $i=1,\ldots ,n$, by \eqref{condition*}.
Let $h=h_1+\dotsb +h_n$. We can apply Lemma \ref{f+g} with $r=\varphi(h)$ and $r_i=\varphi(h_i)$, obtaining
$$\widetilde{\varphi(h)}(y)=0.$$

We claim that $h=f$. Indeed, if $x\notin F$ then $x\in U$. Thus
$f(x)=0$, by our assumption on $f$. It follows that for such $x$ and for all $i\in \{1,\ldots ,n\}$ we have $h_i(x)=f(x)\cdot f_i(x)=0$. Thus, $h(x)=h_1(x)+\dotsb +h_n(x)=0=f(x)$ for $x\notin F$.
Now suppose that $x\in F$. We have
\begin{align*}
h(x)&=h_1(x)+\dotsb +h_n(x)=f(x)\cdot (f_1(x)+\dotsb + f_n(x))\\
&=f(x)\cdot \sum_{i=1}^n\frac{g_i(x)}{g_1(x)+\dotsb +g_n(x)}=f(x).
\end{align*}
\end{proof}
\begin{cor}\label{cor_nonempty}
If $\varphi:C_p(X)\to C_p(Y)$ is a linear continuous surjection, then
for every $y\in\beta Y $ the set $s_{\varphi}(y)$ is nonempty.
\end{cor}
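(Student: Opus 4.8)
The plan is to prove Corollary \ref{cor_nonempty} by contradiction, exploiting the preceding Lemma \ref{lemma_effective} in the extreme case where $s_\varphi(y)=\emptyset$. Suppose, towards a contradiction, that for some $y\in\beta Y$ we have $s_\varphi(y)=\emptyset$. The key observation is that $\emptyset$ is trivially an open set in $\beta X$ containing $s_\varphi(y)$, so Lemma \ref{lemma_effective} applies with $U=\emptyset$. The hypothesis of that lemma then becomes $f[U\cap X]=f[\emptyset]=\{0\}$, which is vacuously satisfied by \emph{every} function $f\in C_p(X)$. Hence the conclusion of the lemma yields $\widetilde{\varphi(f)}(y)=0$ for all $f\in C_p(X)$.

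Next I would leverage surjectivity of $\varphi$ to derive a contradiction. Since $\varphi$ is onto, every $g\in C_p(Y)$ is of the form $g=\varphi(f)$ for some $f\in C_p(X)$. The identity just established says that $\widetilde{g}(y)=\widetilde{\varphi(f)}(y)=0$ for every $g\in C_p(Y)$. In particular, taking $g$ to be any function with $\widetilde{g}(y)\neq 0$ gives the contradiction. The existence of such a $g$ is where I must be careful, and this is the main technical point.

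The subtlety is that $y$ lies in $\beta Y$, not necessarily in $Y$, and $\widetilde{g}$ takes values in $\overline{\RR}=\RR\cup\{\infty\}$, so I cannot simply evaluate an ordinary function at $y$. To produce a witness I would use the density of $Y$ in $\beta Y$ together with the fact that bounded continuous functions on $Y$ extend to $\beta Y$. Concretely, pick any point $y_0\in Y$; by complete regularity there is a bounded $g\in C_p(Y)$ with $g(y_0)=1$, and more to the point I want $\widetilde{g}(y)\neq 0$. Since $\beta Y$ is the \v{C}ech-Stone compactification, the bounded continuous functions on $Y$ separate points of $\beta Y$ and, in particular, for the point $y\in\beta Y$ there is a bounded $g\in C_p(Y)$ with $\widetilde{g}(y)=1$ (indeed, constant functions already give $\widetilde{\mathbf{1}}(y)=1\neq 0$, where $\mathbf{1}$ is the constant function $1$ on $Y$). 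Taking $g=\mathbf{1}$, which is bounded so that $\widetilde{g}(y)=1\in\RR$, and writing $g=\varphi(f)$, we obtain $1=\widetilde{g}(y)=\widetilde{\varphi(f)}(y)=0$, the desired contradiction.

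I expect the only genuine obstacle to be the bookkeeping around values in $\overline{\RR}$ and the distinction between $\widetilde{\varphi(f)}(y)$ being defined as an element of $\overline{\RR}$ versus being a real number; choosing the constant function $\mathbf{1}$ sidesteps any issue about $\widetilde{g}(y)$ equalling $\infty$, since $\widetilde{\mathbf{1}}\equiv 1$ on all of $\beta Y$. Once that is in place the argument is immediate, and the whole corollary follows cleanly from Lemma \ref{lemma_effective} applied to the empty set.
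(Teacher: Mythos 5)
Your proposal is correct and is essentially identical to the paper's own proof: both apply Lemma \ref{lemma_effective} with $U=\emptyset$ (so the hypothesis $f[U\cap X]\subseteq\{0\}$ is vacuous), use surjectivity of $\varphi$ to write a function $g$ with $\widetilde{g}(y)=1$ as $\varphi(f)$, and derive the contradiction $1=\widetilde{g}(y)=0$. Your extra care in choosing $g=\mathbf{1}$ to avoid the value $\infty$ is a sensible way of justifying the paper's tacit choice of such a $g$.
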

\begin{proof}
Take $y\in \beta Y$ and suppose that $s_{\varphi}(y)=\emptyset$.
Then $\emptyset$ is an open set containing $s_{\varphi}(y)$.  Let $g\in C_p(Y)$ be such that $\widetilde{g}(y)=1$. Since $\varphi$ is onto, there is $f\in C_p(X)$ with $\varphi(f)=g$. Clearly $f[\emptyset]\subseteq \{0\}$ so
$$\widetilde{g}(y)=\widetilde{\varphi(f)}(y)=0,$$
by Lemma \ref{lemma_effective}, which is a contradiction.
\end{proof}
\begin{prop}\label{s is lsc}
Let $\varphi:C_p(X)\to C_p(Y)$ be a continuous surjection. The set-valued map
$s_\varphi:\beta Y\to \mathcal{K}(\beta X)$, given by the assignment $y\mapsto s_{\varphi}(y)$ is
lower semi-continuous.
\end{prop}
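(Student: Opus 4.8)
The plan is to verify lower semi-continuity directly from the definition: I must show that for every open $W \subseteq \beta X$ the preimage $s_\varphi^{-1}(W) = \{y \in \beta Y : s_\varphi(y) \cap W \neq \emptyset\}$ is open in $\beta Y$. That $s_\varphi$ genuinely takes values in $\mathcal{K}(\beta X)$ is already guaranteed by Lemma \ref{lemma_closed} (compactness of the values) and Corollary \ref{cor_nonempty} (nonemptiness), so the statement makes sense. I would fix such a $W$, take an arbitrary $y_0 \in s_\varphi^{-1}(W)$ together with a point $x_0 \in s_\varphi(y_0) \cap W$, and look for an open neighborhood of $y_0$ contained in $s_\varphi^{-1}(W)$.

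The key preparatory step is to produce a witness function whose support sits strictly inside $W$. Using normality of the compact Hausdorff space $\beta X$, I would pick an open set $W'$ with $x_0 \in W' \subseteq \overline{W'} \subseteq W$. Since $x_0 \in s_\varphi(y_0)$, the neighborhood $W'$ of $x_0$ is $y_0$-ineffective, so by the definition there is $f \in C_p(X)$ with $f[X \setminus W'] \subseteq \{0\}$ and $\widetilde{\varphi(f)}(y_0) \neq 0$. I then set $O = \{y \in \beta Y : \widetilde{\varphi(f)}(y) \neq 0\}$; since $\widetilde{\varphi(f)}$ is continuous and $\{0\}$ is closed in $\overline{\RR}$, the set $O$ is open and contains $y_0$.

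It remains to check $O \subseteq s_\varphi^{-1}(W)$, and here I argue by contradiction. If some $y \in O$ had $s_\varphi(y) \cap W = \emptyset$, then $s_\varphi(y)$ would be contained in the open set $U = \beta X \setminus \overline{W'}$. Because $U \cap X \subseteq X \setminus W'$, the function $f$ vanishes on $U \cap X$, so Lemma \ref{lemma_effective} applied to this $U$ yields $\widetilde{\varphi(f)}(y) = 0$, contradicting $y \in O$. Hence every $y \in O$ satisfies $s_\varphi(y) \cap W \neq \emptyset$, i.e.\ $O \subseteq s_\varphi^{-1}(W)$; as $y_0$ was arbitrary, $s_\varphi^{-1}(W)$ is open.

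The one point requiring care --- and the reason for passing from $W$ to the smaller $W'$ --- is precisely this last step. From $s_\varphi(y) \cap W = \emptyset$ one only gets $s_\varphi(y) \subseteq \beta X \setminus W$, which need not avoid the boundary of $W$, so Lemma \ref{lemma_effective} could not be invoked with a function merely vanishing off $W$ (the hypothesis of that lemma demands an \emph{open} $U \supseteq s_\varphi(y)$ on which $f$ vanishes). Forcing the support of the witness into $\overline{W'} \subseteq W$ is exactly what makes $\beta X \setminus \overline{W'}$ an admissible choice of $U$: it is open, it still contains $s_\varphi(y)$ whenever $s_\varphi(y) \cap W = \emptyset$, and $f$ vanishes on its trace with $X$. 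Everything else is a routine unwinding of the definitions of $y$-ineffectiveness and of the continuity of the extension $\widetilde{\varphi(f)}$.
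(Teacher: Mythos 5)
Your proof is correct and follows essentially the same route as the paper's own argument: shrink the target open set to $W'$ with $\overline{W'}\subseteq W$, extract a witness function $f$ from the $y_0$-ineffectiveness of $W'$, take the open set where $\widetilde{\varphi(f)}\neq 0$, and rule out $s_\varphi(y)\cap W=\emptyset$ via Lemma \ref{lemma_effective} applied to $\beta X\setminus\overline{W'}$. The paper's proof is identical up to renaming ($W$, $W'$, $O$ there are $U$, $V$, $W$), and your closing remark correctly identifies why the intermediate shrinking is needed.
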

\begin{proof}
By Lemma \ref{lemma_closed} and Corollary \ref{cor_nonempty}, the map $s$ is well defined. Let $U\subseteq \beta X$ be open. We need to show that the set $$s_\varphi^{-1}(U)=\{y\in \beta Y:s_{\varphi}(y)\cap U\neq\emptyset\}$$ is open in $\beta Y$. Pick $y_0\in s_\varphi^{-1}(U)$ and take $x_0\in s_{\varphi}(y_0)\cap U$ witnessing $s_{\varphi}(y_0)\cap U\neq \emptyset$. Let $V$ be an open neighborhood of $x_0$ such that $\overline{V}\subseteq U$. Since $x_0\in s_{\varphi}(y_0)$, the set $V\ni x_0$ is $y_0$-ineffective. Therefore, there is $f\in C_p(X)$ such that $f[X\setminus V]\subseteq\{0\}$ and
$\widetilde{\varphi(f)}(y_0)\neq 0$. Consider the open set
$$W=\{y\in \beta Y:\widetilde{\varphi(f)}(y)\neq 0\}.$$
Clearly, $y_0\in W$. We claim that $W\subseteq s_\varphi^{-1}(U)$. Take $y\in W$. If $s_{\varphi}(y)\cap U =\emptyset$ then $s_{\varphi}(y)\subseteq \beta X\setminus \overline{V}$, the set $\beta X\setminus \overline{V}$ is open in $\beta X$, and $f[X\setminus \overline{V}] \subseteq\{0\}$. Hence $\widetilde{\varphi(f)}(y)=0$, by
Lemma \ref{lemma_effective}. A contradiction with $y\in W$.

It follows that $y_0\in W\subseteq s_\varphi^{-1}(U)$, where $W$ is open. Since $y_0$ was chosen arbitrarily, the set $s_\varphi^{-1}(U)$ is open.
\end{proof}

\begin{cor}\label{Y_n}
For every integer $n\geq 1$, the set $\widetilde{Y_n}=\{y\in \beta Y :|s_{\varphi}(y)|\leq n\}$ is closed in $\beta Y$.
\end{cor}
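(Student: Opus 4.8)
The plan is to show that the complement $\{y\in\beta Y:|s_\varphi(y)|\geq n+1\}$ is open in $\beta Y$, which is equivalent to the assertion that $\widetilde{Y_n}$ is closed. The only tools I expect to need are the lower semi-continuity of the map $s_\varphi$, established in Proposition \ref{s is lsc}, together with the fact that $\beta X$ is a compact Hausdorff space, so that finitely many of its points can be separated by pairwise disjoint open sets.

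First I would fix a point $y_0$ with $|s_\varphi(y_0)|\geq n+1$ and select $n+1$ distinct points $x_1,\ldots,x_{n+1}\in s_\varphi(y_0)$. Since $\beta X$ is Hausdorff, I can then choose pairwise disjoint open sets $U_1,\ldots,U_{n+1}\subseteq\beta X$ with $x_i\in U_i$ for each $i$. Because $x_i\in s_\varphi(y_0)$ and $x_i\in U_i$, we have $s_\varphi(y_0)\cap U_i\neq\emptyset$, so $y_0\in s_\varphi^{-1}(U_i)$ for every $i\leq n+1$.

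Next, by Proposition \ref{s is lsc} each of the sets $s_\varphi^{-1}(U_i)=\{y:s_\varphi(y)\cap U_i\neq\emptyset\}$ is open in $\beta Y$, so their intersection
$$W=\bigcap_{i=1}^{n+1}s_\varphi^{-1}(U_i)$$
is an open neighborhood of $y_0$. For any $y\in W$, the compact set $s_\varphi(y)$ meets each of the $n+1$ pairwise disjoint sets $U_1,\ldots,U_{n+1}$; picking one point of $s_\varphi(y)$ in each $U_i$ yields $n+1$ distinct points, whence $|s_\varphi(y)|\geq n+1$. Thus $W\subseteq\{y:|s_\varphi(y)|\geq n+1\}$, and since $y_0$ was arbitrary this complement is open, so $\widetilde{Y_n}$ is closed.

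There is no serious obstacle here: the argument is the standard one showing that the locus where a lower semi-continuous compact-valued map has at least $n+1$ values is open. The only point requiring a moment's care is the separation of the chosen points by disjoint open sets, which is guaranteed by the Hausdorffness of the compactification $\beta X$; everything else is a direct application of lower semi-continuity.
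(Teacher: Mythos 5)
Your proof is correct and is essentially identical to the paper's own argument: both take a point outside $\widetilde{Y_n}$, separate $n+1$ points of its value $s_\varphi(y)$ by pairwise disjoint open sets, and use the lower semi-continuity of $s_\varphi$ (Proposition \ref{s is lsc}) to produce an open neighborhood contained in the complement. No gaps or differences worth noting.
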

\begin{proof}
Let $y\in \beta Y\setminus \widetilde{Y_n}.$ Then $s_{\varphi}(y)$ has at least $n+1$ elements, so there are distinct
$x_1,\ldots ,x_{n+1}\subseteq s_{\varphi}(y)$. Let $V_1,\ldots, V_{n+1}$ be pairwise disjoint open subsets of $\beta X$ such that $x_i\in V_i$,
for $i=1,\ldots , n+1$. By lower semi-continuity of $s_\varphi$, cf. Proposition \ref{s is lsc}, the set $W=\bigcap_{i=1}^{n+1} s_\varphi^{-1}(V_i)$
is open and clearly $y\in W$. For any $z\in W$, the set $s_{\varphi}(z)$ meets $n+1$ pairwise disjoint sets $V_i$.
Hence $y\in W\subseteq \beta Y\setminus \widetilde{Y_n}$.
\end{proof}

Similarly, from lower semi-continuity of the support map $y\mapsto \supp_\varphi(y)$ (see Lemma \ref{Lemma_supp}) it follows that the set
$$Y_n=\{y\in Y:|\supp_\varphi(y)|\leq n\}$$
is closed in $Y$. Also, since $s_{\varphi}(y)=\supp_\varphi(y)$ for $y\in Y$ (cf. Lemma \ref{lemma_s=supp}), we have $Y_n\subseteq \widetilde{Y_n}$.

\section{Technical Lemmata}
In this section we will use some ideas from Okunev \cite{Ok} (see also \cite{K}).
Let $Z$ be a Tychonoff space. For $\varepsilon>0$ and a finite set $F=\{z_1,\ldots , z_k\}\subseteq \beta Z$ we set
$$O_Z(F,\varepsilon)=\{f\in C_p(Z):|\widetilde{f}(z_i)|<\varepsilon, \;i=1,\ldots ,k\}.$$
For a point $z\in \beta Z$ and $\varepsilon>0$, let
$$\bar{O}_Z(z,\varepsilon)=\{f\in C_p(Z):|\widetilde{f}(z)|\leq \varepsilon\}.$$
Note that for $z\in Z$ the set $\bar{O}_Z(z,\varepsilon)$ is closed in $C_p(Z)$, whereas for $z\in \beta Z\setminus Z$ it is dense and has empty interior in
$C_p(Z)$.
 
Let $\varphi:C_p(X)\to C_p(Y)$ be a linear homeomorphism. By linearity
$\varphi(\underline{0})=\underline{0}$, where $\underline{0}$ is the constant function equal to 0 in the respective space.

For positive integers $k$ and $m$ we define the set
$$Z_{m,k}=\left\{(y,F)\in Y\times [X]^{\leq k}:\varphi\left(O_X\left(F,\tfrac{1}{m}\right)\right)\subseteq \bar{O}_Y(y,1)  \right\}.$$
We should remark that in \cite{Ok} sets $Z_{m,k}$ are defined in a slightly different way,
i.e., the product $X^k$ is used instead of the hyperspace $[X]^{\leq k}$. However our (cosmetic) change does not affect the arguments from \cite{Ok}.

Now, for positive integers $k$ and $m$, let $S_{m,k}$ be the closure of $Z_{m,k}$ in the (compact) space $\beta Y\times [\beta X]^{\leq k}$. Recall that $[\beta X]^{\leq k}$ is endowed with the Vietoris topology (cf. Section 2.1).
We have, cf. \cite[Lemma 1.4]{Ok} (we reproduce the proof here for the convenience of the reader):
\begin{lemma}\label{Okunev_1.4}
If $(y,F)\in S_{m,k}$, then $\varphi\left(O_X\left(F,\tfrac{1}{m}\right)\right)\subseteq \bar{O}_Y(y,1)$.
\end{lemma}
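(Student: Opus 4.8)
The plan is to realize the closure point $(y,F)$ as the limit of a net drawn from $Z_{m,k}$ and then transfer the defining inclusion of $Z_{m,k}$ to the limit, using the continuity of the \v{C}ech--Stone extensions $\widetilde{f}$ and $\widetilde{\varphi(f)}$. First I fix an arbitrary $f\in O_X\!\left(F,\tfrac1m\right)$; the goal is to show $|\widetilde{\varphi(f)}(y)|\le 1$, i.e. $\varphi(f)\in \bar{O}_Y(y,1)$. Since the space $\beta Y\times[\beta X]^{\leq k}$ is compact (the hyperspace $\mathcal{K}(\beta X)$ with the Vietoris topology is compact, and $[\beta X]^{\leq k}$ is closed in it, by the same kind of argument as in Corollary \ref{Y_n}), the point $(y,F)\in S_{m,k}=\overline{Z_{m,k}}$ is the limit of a net $(y_\alpha,F_\alpha)$ with $(y_\alpha,F_\alpha)\in Z_{m,k}$.

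The crucial step is to show that $f$ eventually lies in $O_X\!\left(F_\alpha,\tfrac1m\right)$. Here I exploit that the inequality defining $O_X\!\left(F,\tfrac1m\right)$ is \emph{strict}: the set $W=\{w\in\beta X:|\widetilde{f}(w)|<\tfrac1m\}$ is open in $\beta X$ by continuity of $\widetilde{f}$, and $F\subseteq W$ precisely because $f\in O_X\!\left(F,\tfrac1m\right)$. Hence $F$ belongs to the basic Vietoris-open set $\langle\{W\}\rangle=\{K\in\mathcal{K}(\beta X):K\subseteq W\}$, so by convergence $F_\alpha\subseteq W$ for all $\alpha$ beyond some index. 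For such $\alpha$ every point $w\in F_\alpha$ satisfies $|\widetilde{f}(w)|<\tfrac1m$, that is $f\in O_X\!\left(F_\alpha,\tfrac1m\right)$. This is the step I expect to be the main obstacle, since it is exactly where the passage to the hyperspace and the Vietoris topology does the real work; the strictness of the defining inequality is what guarantees the open neighborhood $W$ of $F$ and thereby the eventual inclusion.

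Finally, since $(y_\alpha,F_\alpha)\in Z_{m,k}$, its defining condition gives $\varphi\!\left(O_X\!\left(F_\alpha,\tfrac1m\right)\right)\subseteq \bar{O}_Y(y_\alpha,1)$, and as $f\in O_X\!\left(F_\alpha,\tfrac1m\right)$ for all large $\alpha$, we obtain $|\widetilde{\varphi(f)}(y_\alpha)|\le 1$ eventually. Now $\widetilde{\varphi(f)}\colon\beta Y\to\overline{\RR}$ is continuous and $y_\alpha\to y$, so $\widetilde{\varphi(f)}(y_\alpha)\to\widetilde{\varphi(f)}(y)$; since the values $\widetilde{\varphi(f)}(y_\alpha)$ eventually lie in the closed subset $[-1,1]\subseteq\overline{\RR}$, so does the limit. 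Thus $|\widetilde{\varphi(f)}(y)|\le 1$, i.e. $\varphi(f)\in\bar{O}_Y(y,1)$, and as $f$ was arbitrary this yields $\varphi\!\left(O_X\!\left(F,\tfrac1m\right)\right)\subseteq\bar{O}_Y(y,1)$. It is worth noting that the reason the statement uses the closed ball $\bar{O}_Y(y,1)$ rather than an open one is precisely this last limiting step, which preserves only the non-strict inequality.
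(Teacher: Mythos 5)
Your proof is correct and is essentially the paper's argument in dual form: the paper argues by contradiction, using the Vietoris-open set $\{A\in[\beta X]^{\leq k}:\widetilde{f}(A)\subseteq(-\tfrac1m,\tfrac1m)\}$ and the open set $\{z\in\beta Y:|\widetilde{\varphi(f)}(z)|>1\}$ to produce a point of $Z_{m,k}$ violating its defining condition, while you run the same closure/openness mechanism directly with a net and pass the non-strict inequality to the limit. The content is identical (strictness of the inequality on the $F$-side gives Vietoris openness; the closed condition on the $y$-side survives the limit), so no further comparison is needed.
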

\begin{proof}
Otherwise, there is $f\in C_p(X)$ with $|\widetilde{f}(x)|<\tfrac{1}{m}$ for each $x\in F$ and $|\widetilde{\varphi(f)}(y)|>1$.
The set
$$U=\left\{A\in [\beta X]^{\leq k}: \widetilde{f}(A)\subseteq \left( -\tfrac{1}{m},\tfrac{1}{m} \right)\right\}$$
is open in $[\beta X]^{\leq k}$ and $F\in U$.
Similarly, the set
$$V=\left\{z\in \beta Y:|\widetilde{\varphi(f)}(z)|>1\right\}$$
is an open neighborhood of $y$ in $\beta Y$.
Since $(y,F)\in S_{m,k}$, the open set $V\times U$ has a nonempty intersection with $Z_{m,k}$.
This, however contradicts the definition of $Z_{m,k}$.
\end{proof}

For $k,m\geq 1$, define
$$C_{m,k}=\pi_{\beta Y}(S_{m,k}),$$
where $\pi_{\beta Y}:\beta Y\times [\beta X]^{\leq k}\to\beta Y$ is the projection onto the first factor. Clearly, $C_{m,k}$ is closed in $\beta Y$.

Recall that $Y_n=\{y\in Y:|\supp_\varphi(y)|\leq n\}$. We set $$A_{m,n}=Y_n\cap C_{m,n}.$$

\begin{lemma}\label{lemat_supp_in_Z_mn}
If $y\in Y_{n}$, then for some $m\geq 1$,
$(y,\supp_\varphi(y))\in Z_{m,n}$ and thus $y\in A_{m,n}$.
\end{lemma}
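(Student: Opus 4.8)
The plan is to produce the index $m$ by a continuity argument applied to the linear functional $f\mapsto \varphi(f)(y)$, and then to read off membership in $A_{m,n}$ directly from the definitions. First I would record the relevant finiteness. Since $\varphi$ is a linear homeomorphism it is in particular a continuous linear surjection, so by Lemma \ref{Lemma_supp} the set $F:=\supp_\varphi(y)$ is finite and nonempty; as $y\in Y_n$ we have $|F|\le n$, and hence $F\in [X]^{\le n}$. Unwinding the definition of $Z_{m,n}$, the task is to find $m\ge 1$ with $\varphi\bigl(O_X(F,\tfrac1m)\bigr)\subseteq \bar O_Y(y,1)$; equivalently, writing $L(f)=\varphi(f)(y)$, I must arrange that $|L(f)|\le 1$ whenever $|f(x)|<\tfrac1m$ for all $x\in F$. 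The functional $L$ is the composition of $\varphi$ with evaluation at the point $y\in Y$, hence it is continuous and linear on $C_p(X)$, and $L(\underline 0)=0$.

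Next I would use continuity of $L$ at $\underline 0$ to obtain a basic neighborhood $W=\{f\in C_p(X):|f(z_j)|<\delta,\ j=1,\dots,p\}$, determined by finitely many points $z_1,\dots,z_p\in X$ and some $\delta>0$, with $L(W)\subseteq(-1,1)$. The main obstacle is that the points $z_j$ need not belong to $F$, so membership in $O_X(F,\tfrac1m)$ does not by itself force membership in $W$. This is exactly where Lemma \ref{Lemma_supp}(2) enters: it tells us that $L$ annihilates every function vanishing on $F$, so $L$ depends only on the values taken on $F$. Concretely, since $F$ and $\{z_1,\dots,z_p\}\setminus F$ are disjoint finite subsets of the Tychonoff space $X$, I would fix a continuous $\psi:X\to[0,1]$ with $\psi[F]\subseteq\{1\}$ and $\psi[\{z_1,\dots,z_p\}\setminus F]\subseteq\{0\}$. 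For $f\in C_p(X)$ put $f'=f\cdot\psi$; then $f'=f$ on $F$, so $f-f'$ vanishes on $F=\supp_\varphi(y)$ and Lemma \ref{Lemma_supp}(2) yields $L(f')=L(f)$. Moreover $f'(z_j)=f(z_j)$ when $z_j\in F$ and $f'(z_j)=0$ otherwise.

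Finally I would choose $m$ with $\tfrac1m\le\delta$ and conclude. For any $f\in O_X(F,\tfrac1m)$ we have $|f(x)|<\tfrac1m\le\delta$ for $x\in F$, whence $|f'(z_j)|<\delta$ for every $j$ (either $f'(z_j)=0$, or $z_j\in F$ and $|f'(z_j)|=|f(z_j)|<\delta$); thus $f'\in W$ and $|L(f)|=|L(f')|<1$. This gives $\varphi\bigl(O_X(F,\tfrac1m)\bigr)\subseteq \bar O_Y(y,1)$, i.e. $(y,F)\in Z_{m,n}$. Since $Z_{m,n}\subseteq S_{m,n}$, projecting onto the first coordinate shows $y=\pi_{\beta Y}(y,F)\in C_{m,n}$, and combining this with $y\in Y_n$ we obtain $y\in Y_n\cap C_{m,n}=A_{m,n}$, as required. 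The only delicate point is the point-mismatch between the continuity neighborhood and the support $F$, and it is resolved entirely by Lemma \ref{Lemma_supp}(2) together with the auxiliary function $\psi$.
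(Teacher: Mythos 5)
Your proof is correct and follows essentially the same route as the paper's: continuity of $\varphi$ (equivalently, of the functional $f\mapsto\varphi(f)(y)$) at $\underline{0}$ produces the finite set of points and the index $m$, and Lemma \ref{Lemma_supp}(2) plus an auxiliary function bridging $\supp_\varphi(y)$ and the extra points transfers the estimate. The only differences are cosmetic: the paper runs the last step as a proof by contradiction with an abstractly chosen $g$ agreeing with $f$ on the support, whereas you argue directly with the explicit correction $f'=f\cdot\psi$.
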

\begin{proof}
Take $y\in Y_n$. By continuity of $\varphi$ there is a finite set
$F=\{x_1,\ldots , x_k\}\subseteq X$ and $m\geq 1$ with
$$O_X\left(F,\tfrac{1}{m}\right)\subseteq \varphi^{-1}\left(\bar{O}_Y(y,1)\right).$$
We will check that the number $m$ does the job.
To this end,
consider a function $f\in C_p(X)$ satisfying 
$$|f(x)|<\tfrac{1}{m},\quad\text{for every}\;x\in \supp_\varphi(y).$$

Striving for a contradiction, suppose that $|\varphi(f)(y)|>1$.  and let $g\in C_p(X)$ be such that $g\upharpoonright\supp_\varphi(y)=f\upharpoonright\supp_\varphi(y)$ and $g(x)=0$, for every $x\in F\setminus\supp_\varphi(y)$. Since $g$ and $f$ agree on $\supp_\varphi(y)$, we have
$\varphi(g)(y)=\varphi(f)(y)>1$ (see Lemma \ref{Lemma_supp}). On the other hand,
$$g\in O_X\left(F,\tfrac{1}{m}\right)\subseteq \varphi^{-1}\left(\bar{O}_Y(y,1)\right),$$
a contradiction.
\end{proof}

\begin{prop}\label{proposition A_nn}
$Y=\bigcup_{n=1}^\infty A_{n,n}$ and $A_{n,n}\subseteq A_{m,m}$ for $m\geq n$.
\end{prop}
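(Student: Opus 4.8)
The plan is to verify the two assertions separately, both by unwinding the definitions $A_{m,n}=Y_n\cap C_{m,n}$ together with Lemma \ref{lemat_supp_in_Z_mn}.

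For the inclusion $A_{n,n}\subseteq A_{m,m}$ when $m\geq n$, I would argue on each factor. For the first factor, recall $Y_n=\{y\in Y:|\supp_\varphi(y)|\leq n\}$; since $n\leq m$ we trivially have $Y_n\subseteq Y_m$. For the second factor I must show $C_{n,n}\subseteq C_{m,m}$. Here one uses monotonicity of the sets $Z_{m,k}$ in both indices: enlarging $m$ relaxes the constraint $|\widetilde f(x)|<\tfrac1m$ (so $O_X(F,\tfrac1m)$ shrinks and the containment $\varphi(O_X(F,\tfrac1m))\subseteq\bar O_Y(y,1)$ becomes easier to satisfy), and enlarging $k$ enlarges the admissible hyperspace $[X]^{\leq k}$. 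Concretely, if $(y,F)\in Z_{n,n}$ then $F\in[X]^{\leq n}\subseteq[X]^{\leq m}$ and $O_X(F,\tfrac1m)\subseteq O_X(F,\tfrac1n)$, so $(y,F)\in Z_{m,m}$; passing to closures gives $S_{n,n}\subseteq S_{m,m}$ and hence $C_{n,n}=\pi_{\beta Y}(S_{n,n})\subseteq\pi_{\beta Y}(S_{m,m})=C_{m,m}$. Intersecting the two factor-inclusions yields $A_{n,n}\subseteq A_{m,m}$.

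For the equality $Y=\bigcup_{n=1}^\infty A_{n,n}$, the inclusion $\bigcup_n A_{n,n}\subseteq Y$ is immediate since each $A_{n,n}\subseteq Y_n\subseteq Y$. For the reverse inclusion, fix $y\in Y$. By Lemma \ref{Lemma_supp}(1), $\supp_\varphi(y)$ is a finite subset of $X$, so $|\supp_\varphi(y)|\leq n$ for some $n\geq1$, i.e.\ $y\in Y_n$. Applying Lemma \ref{lemat_supp_in_Z_mn} to this $y\in Y_n$ produces some $m\geq1$ with $y\in A_{m,n}=Y_n\cap C_{m,n}$. Setting $N=\max\{m,n\}$ and invoking the monotonicity just established gives $y\in A_{m,n}\subseteq A_{N,N}$, so $y\in\bigcup_{k}A_{k,k}$.

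The only genuinely delicate point is the monotonicity $C_{n,n}\subseteq C_{m,m}$, since it requires being careful that the closure operation and the projection both preserve the inclusion $Z_{n,n}\subseteq Z_{m,m}$; but these are routine once the index-monotonicity of $Z_{m,k}$ is recorded. I would also double-check that Lemma \ref{lemat_supp_in_Z_mn} delivers membership in $A_{m,n}$ with the \emph{same} first index $m$ as the support-index, so that the final step correctly combines the two monotonicities (in $m$ via $C_{m,m}$ and in $n$ via $Y_n$) to land in a single diagonal set $A_{N,N}$.
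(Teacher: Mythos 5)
Your proof is correct and follows essentially the same route as the paper: both rest on Lemma \ref{Lemma_supp} (to get $y\in Y_n$), Lemma \ref{lemat_supp_in_Z_mn} (to get $y\in C_{m,n}$ for some $m$), and the monotonicity of $Y_n$ in $n$ and of $C_{m,k}$ in both indices, combined to land in a diagonal set (you take $N=\max\{m,n\}$ where the paper splits into the cases $m\leq n$ and $m>n$). In fact you spell out the monotonicity $Z_{n,n}\subseteq Z_{m,m}\Rightarrow S_{n,n}\subseteq S_{m,m}\Rightarrow C_{n,n}\subseteq C_{m,m}$, including the ambient-hyperspace subtlety, which the paper dismisses as ``clear.''
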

\begin{proof}
Let $y\in Y$.
By Lemma \ref{Lemma_supp}, $Y=\bigcup_{n=1}^\infty Y_n$ so $y\in Y_n$, for some $n\geq 1$.
From Lemma \ref{lemat_supp_in_Z_mn}, we infer that $y\in C_{m,n}$ for some $m$. Note that if $m\leq k$, then $C_{m,n} \subseteq C_{k,n}$, so we can assume that $m>n$, for otherwise $y\in C_{n,n}$ and we are done.  
Since $Y_n\subseteq Y_k$ for $k\geq n$, we have $y\in Y_k$ for all $k\geq n$. In particular $y\in Y_m$.
So $y\in Y_m\cap C_{m,n}$, where $m>n$. But clearly $m>n$ implies $C_{m,n}\subseteq C_{m,m}$, whence $y\in A_{m,m}$.
This gives the equality $Y=\bigcup_{n=1}^\infty A_{n,n}$. The inclusion $A_{n,n}\subseteq A_{m,m}$, for $m\geq n$, is clear.
\end{proof}

Let $B_{m,n}$ be the closure of $A_{m,n}$ in $\beta Y$. Since $Y_n\subseteq \widetilde{{Y}_n}$ and both $\widetilde{{Y}_n}$ and
$C_{m,n}$ are closed in $\beta Y$, we infer that
$$B_{m,n}\subseteq \widetilde{{Y}_n}\cap C_{m,n}.$$
In particular, if $y\in B_{m,n}$, then the set $s_{\varphi}(y)$ is at most $n$-element subset of $\beta X$.
\begin{lemma}\label{s is in S_mn 1}
If $y\in B_{m,n}$, then $\varphi\left(O_X\left(s_{\varphi}(y),\tfrac{1}{m}\right)\right)\subseteq \bar{O}_Y(y,1)$
\end{lemma}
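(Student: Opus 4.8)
The plan is to leverage the membership $y\in C_{m,n}$ together with Lemma~\ref{Okunev_1.4}. Since $B_{m,n}\subseteq\widetilde{Y_n}\cap C_{m,n}$ and $C_{m,n}=\pi_{\beta Y}(S_{m,n})$, there is some $K\in[\beta X]^{\leq n}$ with $(y,K)\in S_{m,n}$, and by Corollary~\ref{cor_nonempty} together with the observation preceding the lemma the set $s_\varphi(y)$ is a nonempty subset of $\beta X$ with at most $n$ points. Lemma~\ref{Okunev_1.4} applied to $(y,K)$ gives $\varphi(O_X(K,\tfrac{1}{m}))\subseteq\bar{O}_Y(y,1)$. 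The real content is that $O_X(s_\varphi(y),\tfrac{1}{m})$ is \emph{a priori larger} than $O_X(K,\tfrac{1}{m})$ (the constraint set $s_\varphi(y)$ need not be contained in, nor comparable to, $K$), so the inclusion we want is strictly stronger than what Lemma~\ref{Okunev_1.4} supplies. Accordingly I would fix an arbitrary $f\in O_X(s_\varphi(y),\tfrac{1}{m})$ and build from it a companion $g\in C_p(X)$ with two properties: first $g\in O_X(K,\tfrac{1}{m})$, so that $|\widetilde{\varphi(g)}(y)|\leq1$; and second $\widetilde{\varphi(g)}(y)=\widetilde{\varphi(f)}(y)$. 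A naive passage to a limit of supports along a net in $A_{m,n}$ would only yield membership of \emph{some} limit set in $S_{m,n}$, not of $s_\varphi(y)$ itself, which is precisely why I favor this direct modification.

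To construct $g$, I would use continuity of $\widetilde f$ and finiteness of $s_\varphi(y)$ to choose an open $U\supseteq s_\varphi(y)$ in $\beta X$ on which $|\widetilde f|<\tfrac{1}{m}$ (so $\widetilde f$ is real-valued on $U$), then shrink to an open $U'$ with $s_\varphi(y)\subseteq U'\subseteq\overline{U'}\subseteq U$, and pick by Urysohn a continuous $\rho\colon\beta X\to[0,1]$ with $\rho\equiv1$ on $\overline{U'}$ and $\rho\equiv0$ off $U$. Setting $h=\rho\cdot\widetilde f$ on $U$ and $h=0$ elsewhere produces a continuous real-valued function on $\beta X$ with $|h|<\tfrac{1}{m}$ everywhere and $h=\widetilde f$ on $\overline{U'}$, whence $g:=h\upharpoonright X$ satisfies $\widetilde g=h$. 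The global bound $|\widetilde g|<\tfrac{1}{m}$ forces $g\in O_X(K,\tfrac{1}{m})$ for \emph{any} position of $K$, so no comparison of $K$ with $s_\varphi(y)$ is ever needed; combined with the inclusion from Lemma~\ref{Okunev_1.4} this gives $|\widetilde{\varphi(g)}(y)|\leq1$, and in particular $\widetilde{\varphi(g)}(y)\in\RR$.

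To finish I would note that $\rho\equiv1$ on $\overline{U'}$ forces $f$ and $g$ to agree on $U'\cap X$, so $(f-g)[U'\cap X]\subseteq\{0\}$ with $U'\supseteq s_\varphi(y)$ open; Lemma~\ref{lemma_effective} then gives $\widetilde{\varphi(f-g)}(y)=0$. Writing $\varphi(f)=\varphi(g)+\varphi(f-g)$ by linearity, and using that both $\widetilde{\varphi(g)}(y)$ and $\widetilde{\varphi(f-g)}(y)$ are real, Lemma~\ref{f+g} yields $\widetilde{\varphi(f)}(y)=\widetilde{\varphi(g)}(y)$, so $|\widetilde{\varphi(f)}(y)|\leq1$, i.e.\ $\varphi(f)\in\bar{O}_Y(y,1)$; as $f$ was arbitrary, $\varphi(O_X(s_\varphi(y),\tfrac{1}{m}))\subseteq\bar{O}_Y(y,1)$. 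I expect the main obstacle to be entirely technical and located in the middle step: verifying that the bump-times-function $h$ is genuinely continuous and real-valued at points where $\widetilde f$ might equal $\infty$ (controlled by the uniform bound $|\widetilde f|<\tfrac{1}{m}$ on $U$ and $\rho\equiv0$ off $U$), and ensuring the additive splitting of the \v{C}ech--Stone extensions at the point $y\in\beta Y$ is legitimate, which is exactly why the real-valuedness hypotheses of Lemma~\ref{f+g} must be checked in advance.
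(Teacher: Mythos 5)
Your proposal is correct and takes essentially the same route as the paper: fix $f\in O_X\left(s_\varphi(y),\tfrac{1}{m}\right)$, build a companion $g$ that agrees with $f$ on a neighborhood of $s_\varphi(y)$ and lies in $O_X\left(K,\tfrac{1}{m}\right)$, then chain Lemma~\ref{Okunev_1.4}, Lemma~\ref{lemma_effective} and Lemma~\ref{f+g} exactly as you do. The only (harmless) difference is in how $g$ is produced: the paper takes a Tietze-type extension of $\widetilde{f}\upharpoonright\overline{V}$ that vanishes on the finite set $F\setminus\overline{V}$, whereas your bump-function product makes $|\widetilde{g}|<\tfrac{1}{m}$ globally, so membership in $O_X\left(K,\tfrac{1}{m}\right)$ holds regardless of where $K$ sits.
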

\begin{proof}
Pick $f\in C_p(X)$ such that $|\widetilde{f}(x)|<\tfrac{1}{m}$ for $x\in s_{\varphi}(y)$.
The set $U=\{x\in \beta X: |\widetilde{f}(x)|<\tfrac{1}{m}\}$ is open in $\beta X$ and $s_\varphi(y)\subseteq U$. Let $V$ be an open subset of $\beta X$ satisfying
\begin{equation}
 s_\varphi(y)\subseteq V\subseteq \overline{V}\subseteq U.\label{s in V}
\end{equation}
Since $y\in B_{m,n}\subseteq C_{m,n}$,
there is $F\in [\beta X]^{\leq n}$ with $(y,F)\in S_{m,n}$.
Let $\widetilde{g}\in C_p(\beta X)$ be a function satisfying
$$\widetilde{g}\upharpoonright \overline{V}=\widetilde{f}\upharpoonright \overline{V}\quad\mbox{and}\quad \widetilde{g}(x)=0
\mbox{ for each }x\in F\setminus \overline{V}.$$
Denote by $g$ the function $\widetilde{g}\upharpoonright X$, i.e., the restriction of $\widetilde{g}$ to $X$.
Clearly, $g\in O_X\left(F,\tfrac{1}{m}\right)$, so by Lemma \ref{Okunev_1.4}, we have
\begin{equation}
 |\widetilde{\varphi(g)}(y)|\leq 1 \label{|g|<1}
\end{equation}
Further, $(\widetilde{f}-\widetilde{g})\upharpoonright V=0$. So from \eqref{s in V} and Lemma \ref{lemma_effective} we infer that
\begin{equation}
 \widetilde{\varphi(f-g)}(y)=0. \label{f-g=0}
\end{equation}

By linearity of $\varphi$ we get

$$\varphi(f-g)+\varphi(g)=\varphi(f).$$

Inequality \eqref{|g|<1} and equation \eqref{f-g=0} ensure that Lemma \ref{f+g} can be applied with $r=\varphi(f)$, $r_1=\varphi(f-g)$ and $r_2=\varphi(g)$, whence
$$|\widetilde{\varphi(f)}(y)|=|\widetilde{\varphi(f-g)}(y)+\widetilde{\varphi(g)}(y)|\leq|\widetilde{\varphi(f-g)}(y)|+|\widetilde{\varphi(g)}(y)|\leq 1,$$
by \eqref{|g|<1} and \eqref{f-g=0}.
\end{proof}

From the previous lemma we get the following:

\begin{prop}\label{proposition-BmnAmn}
For every $y\in B_{m,n}\setminus A_{m,n}$
the set $s_{\varphi}(y)\cap (\beta X\setminus X)$ is nonempty.
\end{prop}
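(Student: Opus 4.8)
The plan is to prove the contrapositive: assuming $y\in B_{m,n}$ with $s_{\varphi}(y)\subseteq X$ (equivalently $s_{\varphi}(y)\cap(\beta X\setminus X)=\emptyset$), I will show $y\in A_{m,n}$. Since $y\in B_{m,n}\subseteq C_{m,n}$ already holds and $A_{m,n}=Y_n\cap C_{m,n}$, it suffices to prove $y\in Y_n$. From $y\in B_{m,n}\subseteq\widetilde{Y_n}$ I know $s_{\varphi}(y)$ has at most $n$ elements, and Corollary \ref{cor_nonempty} gives $s_{\varphi}(y)\neq\emptyset$; being a compact subset of $X$, it is a nonempty finite set $\{x_1,\dots,x_j\}\subseteq X$ with $j\leq n$. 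Once I establish $y\in Y$, Lemma \ref{lemma_s=supp} yields $\supp_{\varphi}(y)=s_{\varphi}(y)$, so $|\supp_{\varphi}(y)|\leq n$ and hence $y\in Y_n$. Thus the whole proposition reduces to showing $y\in Y$, and I will do this by analyzing the functional $L(f)=\widetilde{\varphi(f)}(y)$ on $C_p(X)$.

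The first step is to show $L$ takes only finite values. Here I invoke Lemma \ref{s is in S_mn 1}, which gives $\varphi\!\left(O_X\!\left(s_{\varphi}(y),\tfrac1m\right)\right)\subseteq\bar{O}_Y(y,1)$, i.e.\ $|\widetilde{\varphi(f)}(y)|\leq 1$ whenever $|f(x_i)|<\tfrac1m$ for all $i$. The crucial point, and exactly where the hypothesis $s_{\varphi}(y)\subseteq X$ is indispensable, is that each $x_i\in X$, so $\widetilde f(x_i)=f(x_i)$ is a genuine real number and $O_X\!\left(s_{\varphi}(y),\tfrac1m\right)$ is an honest basic neighborhood of $\underline 0$ in $C_p(X)$. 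Given an arbitrary $f$, I scale: for small $t>0$ one has $tf\in O_X\!\left(s_{\varphi}(y),\tfrac1m\right)$, hence $|\widetilde{\varphi(tf)}(y)|\leq 1$; since $\varphi(tf)=t\varphi(f)$ and multiplication by a positive constant sends $\infty$ to $\infty$ in $\overline{\RR}$, the value $\widetilde{\varphi(f)}(y)$ cannot equal $\infty$. So $L$ is real-valued. I then record that $L$ is a continuous linear functional: linearity comes from linearity of $\varphi$ together with Lemma \ref{f+g} (now legitimate, as all values are finite), and continuity because $L$ is bounded by $1$ on the neighborhood $O_X\!\left(s_{\varphi}(y),\tfrac1m\right)$ of $\underline 0$, and a linear functional bounded on a neighborhood of zero in a topological vector space is continuous. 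By the standard description of the dual of a $C_p$-space, $L$ is a finite combination of point evaluations, $L(f)=\sum_i\lambda_i f(z_i)$ with $z_i\in X$ (one may even take the $z_i$ inside $s_{\varphi}(y)$ using Lemma \ref{lemma_effective}, though this is not needed).

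Finally I transport this to $Y$ through the inverse homeomorphism. For every $g\in C_p(Y)$, writing $g=\varphi(\varphi^{-1}(g))$ gives $\widetilde g(y)=L(\varphi^{-1}(g))=\sum_i\lambda_i(\varphi^{-1}(g))(z_i)$, so $M(g)=\sum_i\lambda_i(\varphi^{-1}(g))(z_i)$ is a continuous linear functional on $C_p(Y)$ agreeing with $g\mapsto\widetilde g(y)$ everywhere. Writing $M=\sum_k\mu_k\,\delta_{w_k}$ with $w_k\in Y$, I conclude $y\in Y$: otherwise $y$ is distinct from each $w_k\in Y$, and by complete regularity of the compact space $\beta Y$ there is a continuous $h\colon\beta Y\to[0,1]$ with $h(y)=1$ and $h(w_k)=0$ for all $k$; taking $g=h\upharpoonright Y$ (whose bounded continuous extension to $\beta Y$ is again $h$) would yield $1=\widetilde g(y)=M(g)=\sum_k\mu_k\,h(w_k)=0$, a contradiction.

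The hard part is the second step, the real-valuedness of $L$: everything downstream is standard $C_p$-duality, but it is precisely here that the position of $s_{\varphi}(y)$ relative to $X$ enters. If $s_{\varphi}(y)$ met $\beta X\setminus X$, then $O_X\!\left(s_{\varphi}(y),\tfrac1m\right)$ would fail to be a neighborhood of $\underline 0$ (recall that $\bar{O}_X(z,\varepsilon)$ is dense with empty interior for $z\in\beta X\setminus X$), and the scaling argument bounding $\widetilde{\varphi(f)}(y)$ would collapse. I also need to take small care in the last step that the separating function is bounded, so that its continuous extension to $\beta Y$ is the usual one and genuinely attains the prescribed values at $y$ and at the $w_k$.
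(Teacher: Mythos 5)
Your proof is correct, but it takes a genuinely different route from the paper's. Both arguments pivot on Lemma \ref{s is in S_mn 1} together with the observation that $O_X\left(s_{\varphi}(y),\tfrac{1}{m}\right)$ is an honest neighborhood of $\underline{0}$ in $C_p(X)$ precisely when $s_{\varphi}(y)\subseteq X$, but they exploit this differently. The paper argues directly and in a few lines: since $A_{m,n}$ is closed in $Y$ and $B_{m,n}$ is its closure in $\beta Y$, any $y\in B_{m,n}\setminus A_{m,n}$ lies in $\beta Y\setminus Y$; for such $y$ the set $\bar{O}_Y(y,1)$ has empty interior in $C_p(Y)$ (the unproved remark made right after $\bar{O}_Z(z,\varepsilon)$ is introduced); so if $s_{\varphi}(y)$ were contained in $X$, then $\varphi\left(O_X\left(s_{\varphi}(y),\tfrac{1}{m}\right)\right)$ --- a nonempty open set, since $\varphi$ is a homeomorphism --- would sit inside a set with empty interior, a contradiction. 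You instead prove the contrapositive by upgrading the bound of Lemma \ref{s is in S_mn 1} into a duality statement: scaling gives real-valuedness of $L(f)=\widetilde{\varphi(f)}(y)$, Lemma \ref{f+g} gives additivity, boundedness on a neighborhood of $\underline{0}$ gives continuity, and then two applications of the standard description of the dual of a $C_p$-space, plus a separation argument in $\beta Y$, force $y\in Y$, hence $y\in Y_n\cap C_{m,n}=A_{m,n}$. The trade-off: the paper's proof is shorter, resting on the elementary category-flavored fact about $\bar{O}_Y(y,1)$ for $y\in\beta Y\setminus Y$ and on openness of $\varphi$; yours is longer and imports the (equally standard, equally unproved here) theorem that continuous linear functionals on $C_p$-spaces are finite combinations of point evaluations, but in exchange it makes explicit \emph{why} $y$ must belong to $Y$ --- it exhibits $g\mapsto\widetilde{g}(y)$ as an element of the dual of $C_p(Y)$ represented by points of $Y$. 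Both proofs use the full homeomorphism hypothesis, the paper through openness of $\varphi$, you through continuity of $\varphi^{-1}$.
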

\begin{proof}
Let $y\in B_{m,n}\setminus A_{m,n}$.
Since $A_{m,n}$ is closed in $Y$ and $B_{m,n}$ is the closure of $A_{m,n}$ in $\beta Y$, we have $y\in \beta Y\setminus Y$.
So the set $\bar{O}_Y\left(y,\tfrac{1}{m}\right)$ has empty interior in $C_p(Y)$. By Lemma \ref{s is in S_mn 1}, we have
$$\varphi\left(O_X\left(s_{\varphi}(y),\tfrac{1}{m}\right)\right)\subseteq \bar{O}_Y(y,1).$$
Now, if $s_{\varphi}(y)$ were a subset of $X$, then the set $\varphi\left(O_X\left(s_{\varphi}(y),\tfrac{1}{m}\right)\right)$ would be open,
contradicting emptiness of the interior of $\bar{O}_Y\left(y,1\right)$.
\end{proof}

\begin{prop}\label{proposition-y in s(s(y))}
Let $\varphi:C_p(X)\to C_p(Y)$ be a linear homeomorphism.
If $y\in B_{m,n}$, then there exists $x\in s_{\varphi}(y)$ with $y\in s_{\varphi^{-1}}(x)$.
\end{prop}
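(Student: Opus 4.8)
The plan is to argue by contradiction. Suppose that \emph{no} point $x\in s_{\varphi}(y)$ satisfies $y\in s_{\varphi^{-1}}(x)$. Unwinding the definition of the support for the inverse homeomorphism $\varphi^{-1}:C_p(Y)\to C_p(X)$, this means that for each $x\in s_{\varphi}(y)$ there is an open neighborhood $W_x$ of $y$ in $\beta Y$ that is $x$-effective with respect to $\varphi^{-1}$; that is, every $g\in C_p(Y)$ with $g[Y\setminus W_x]\subseteq\{0\}$ satisfies $\widetilde{\varphi^{-1}(g)}(x)=0$.

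Here the hypothesis $y\in B_{m,n}$ enters crucially: it guarantees (as recorded right after the definition of $B_{m,n}$) that $s_{\varphi}(y)$ is a finite --- and, by Corollary \ref{cor_nonempty}, nonempty --- subset of $\beta X$. Finiteness is what lets me collapse the family $\{W_x\}$ into a single open neighborhood $W=\bigcap_{x\in s_{\varphi}(y)}W_x$ of $y$, with the property that any $g\in C_p(Y)$ supported (on $Y$) inside $W$ has $\widetilde{\varphi^{-1}(g)}(x)=0$ simultaneously for every $x\in s_{\varphi}(y)$.

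Next I would manufacture a single test function that both lives in $W$ and overshoots the threshold $1$ at $y$. Using normality of $\beta Y$, pick a continuous $G:\beta Y\to[0,2]$ with $G(y)=2$ and $G[\beta Y\setminus W]=\{0\}$, and set $g=G\upharpoonright Y$, so that $\widetilde{g}=G$, $g[Y\setminus W]\subseteq\{0\}$, and $\widetilde{g}(y)=2$. The effectiveness of $W$ then forces $\widetilde{\varphi^{-1}(g)}(x)=0$ for every $x\in s_{\varphi}(y)$, which is exactly the statement that $\varphi^{-1}(g)\in O_X\!\left(s_{\varphi}(y),\tfrac{1}{m}\right)$. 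Now Lemma \ref{s is in S_mn 1}, applicable because $y\in B_{m,n}$, yields $g=\varphi\bigl(\varphi^{-1}(g)\bigr)\in \bar{O}_Y(y,1)$, i.e.\ $|\widetilde{g}(y)|\leq 1$, contradicting $\widetilde{g}(y)=2$. This contradiction proves the existence of the desired $x$.

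The only genuinely delicate point is the clean interface between the two support notions: one must read off the $x$-effective neighborhoods for $\varphi^{-1}$ in the correct (swapped) form and then feed into Lemma \ref{s is in S_mn 1} a function whose inverse image vanishes \emph{exactly} (not merely is small) at each $x\in s_{\varphi}(y)$. The reason the argument closes is that the bound $1$ defining $\bar{O}_Y(y,1)$ is not scale-invariant, so arranging $\widetilde{g}(y)>1$ while keeping $\varphi^{-1}(g)$ identically zero on $s_{\varphi}(y)$ breaks the inclusion of Lemma \ref{s is in S_mn 1}; finiteness of $s_{\varphi}(y)$, which provides the single neighborhood $W$, is precisely what makes such a $g$ available.
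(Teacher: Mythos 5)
Your proof is correct, and it reaches the contradiction by a route that differs from the paper's in the key middle step, although both arguments share the same skeleton: assume the conclusion fails, build a test function $g$ with $\widetilde{g}(y)=2$ for which $\widetilde{\varphi^{-1}(g)}$ vanishes on all of $s_{\varphi}(y)$, and then contradict Lemma \ref{s is in S_mn 1}. Where you differ is in how that vanishing is obtained. You negate the conclusion pointwise, directly from the definition of $s_{\varphi^{-1}}$: for each $x\in s_{\varphi}(y)$ you get an $x$-effective neighborhood $W_x$ of $y$, use finiteness of $s_{\varphi}(y)$ (valid since $y\in B_{m,n}\subseteq \widetilde{Y}_n$) to intersect them into a single neighborhood $W$, and support your test function inside $W$. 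The paper instead works on the other side: it forms the compact set $K=\bigcup\{s_{\varphi^{-1}}(x):x\in s_{\varphi}(y)\}$ (a finite union of sets that are compact by Lemma \ref{lemma_closed}), separates $y$ from $K$ by open sets $K\subseteq V\subseteq\overline{V}\subseteq U$ with $y\notin U$, takes a test function vanishing on $\overline{V}$ and equal to $2$ at $y$, and invokes Lemma \ref{lemma_effective} (applied to $\varphi^{-1}$) to deduce that $\widetilde{\varphi^{-1}(f\upharpoonright Y)}$ vanishes at each $x\in s_{\varphi}(y)$. So your test function is supported near $y$, while the paper's vanishes near the inverse supports; dually, you use only the raw definition of effectiveness where the paper uses the compactness-plus-partition-of-unity machinery packaged in Lemma \ref{lemma_effective}. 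Your version is in this respect more elementary, needing neither Lemma \ref{lemma_effective} nor compactness of the sets $s_{\varphi^{-1}}(x)$, and it costs nothing of substance; note, though, that Lemma \ref{lemma_effective} still enters your argument implicitly, since it underlies Lemma \ref{s is in S_mn 1}. Two cosmetic points: what forces $\widetilde{\varphi^{-1}(g)}(x)=0$ is the $x$-effectiveness of each $W_x$ (the set $W$ itself need not be declared effective; a function supported in $W$ is a fortiori supported in each $W_x$), and vanishing of $\widetilde{\varphi^{-1}(g)}$ on $s_{\varphi}(y)$ \emph{implies}, rather than ``is exactly,'' membership in $O_X\left(s_{\varphi}(y),\tfrac{1}{m}\right)$.
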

\begin{proof}
Since $B_{m,n}\subseteq \widetilde{Y}_n$, the set $s_{\varphi}(y)$ is at most $n$-element. Thus $K=\bigcup\{s_{\varphi^{-1}}(x):x\in s_{\varphi}(y)\}$ is compact,
being a finite union of compact sets $s_{\varphi^{-1}}(x)$.

Striving for a contradiction, suppose that $y\notin K$. Let $U$ be an open set in $\beta Y$ with
$K\subseteq U$ and $y\notin U$. Let $V$ be an open set in $\beta Y$ with
$$K\subseteq V\subseteq \overline{V}\subseteq U.$$

Let $f\in C_p(\beta Y)$ satisfies $f(\overline{V})\subseteq \{0\}$ and $f(y)=2$. From Lemma \ref{lemma_effective} (applied to the map $\varphi^{-1}$), we get
$$\widetilde{\varphi^{-1}(f\upharpoonright Y)}(x)=0,\quad\text{for every}\;x\in s_{\varphi}(y).$$
Combining this with Lemma \ref{s is in S_mn 1}, we get
$$|\widetilde{\varphi(\varphi^{-1}(f\upharpoonright Y))}(y)|=|f(y)|\leq 1,$$
which contradicts $f(y)=2$.
\end{proof}

\section{The main results}

Let $M$ be a separable metrizable space and let $h:Y\to M$ be a continuous surjection.
Since $M$ is separable metrizable it has a metrizable compactification $bM$. Let $\widetilde{h}:\beta Y\to bM$ be a continuous extension of
$h$.
Denote by $d$
a metric on $bM$ that generates the topology of $bM$.

For a natural number $k\geq 1$ we define sets

\begin{align*}
E_k&=\{y\in Y: \left(\forall a,b\in h(s_{\varphi^{-1}}(s_{\varphi}(y)))\right)\; a\neq b \Rightarrow d(a,b)\geq \tfrac{1}{k}\}\\
F_k&=\{y\in \beta Y: \left(\forall a,b\in \widetilde{h}(s_{\varphi^{-1}}(s_{\varphi}(y)))\right)\; a\neq b \Rightarrow d(a,b)\geq \tfrac{1}{k}\}
\end{align*}
It is easy to prove the following
\begin{lemma}\label{lemma-properties of E_k}
The sets $E_k$ and $F_k$ have the following properties:
\begin{enumerate}[(i)]
    \item The set $E_k$ is closed in $Y$, for every $k\geq 1$.
    \item The set $F_k$ is closed in $\beta Y$, for every $k\geq 1$.
    \item $\bigcup_{k=1}^\infty E_k=Y$.
    \item If $k\leq r$, then $E_k\subseteq E_r$.
\end{enumerate}
\end{lemma}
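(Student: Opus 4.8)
The plan is to verify the four properties by analyzing how the set-valued map $s_{\varphi^{-1}}\circ s_\varphi$ interacts with the continuous extension $\widetilde h$, exploiting lower semi-continuity. First I would establish the key structural fact underpinning (i) and (ii): the composite map $y\mapsto s_{\varphi^{-1}}(s_\varphi(y))$ is lower semi-continuous on $\beta Y$ (and its restriction on $Y$), being a composition of the lower semi-continuous maps $s_\varphi$ and $s_{\varphi^{-1}}$ from Proposition \ref{s is lsc} (composition of lower semi-continuous compact-valued maps is again lower semi-continuous, since for open $U$ one has $(s_{\varphi^{-1}}\circ s_\varphi)^{-1}(U)=s_\varphi^{-1}(s_{\varphi^{-1}}^{-1}(U))$). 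Composing further with $\widetilde h$, which is continuous, the map $y\mapsto \widetilde h(s_{\varphi^{-1}}(s_\varphi(y)))$ sends $y$ to a compact subset of the compact metric space $bM$ and is lower semi-continuous.

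For (ii) I would show $F_k$ is closed by showing its complement is open. Fix $y_0\notin F_k$, so there exist distinct $a,b\in \widetilde h(s_{\varphi^{-1}}(s_\varphi(y_0)))$ with $d(a,b)<\tfrac1k$. Choose disjoint open balls $B_a,B_b$ in $bM$ around $a,b$ of radius so small that every $a'\in B_a,b'\in B_b$ still satisfy $a'\neq b'$ and $d(a',b')<\tfrac1k$ (possible since $d(a,b)<\tfrac1k$ strictly). The preimages $\widetilde h^{-1}(B_a),\widetilde h^{-1}(B_b)$ are open in $\beta X$, so by lower semi-continuity of $y\mapsto s_{\varphi^{-1}}(s_\varphi(y))$ the set
$$
W=\{y:\,s_{\varphi^{-1}}(s_\varphi(y))\cap\widetilde h^{-1}(B_a)\neq\emptyset\}\cap\{y:\,s_{\varphi^{-1}}(s_\varphi(y))\cap\widetilde h^{-1}(B_b)\neq\emptyset\}
$$
is an open neighborhood of $y_0$, and every $y\in W$ produces two image points, one in $B_a$ and one in $B_b$, violating the defining condition of $F_k$. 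Hence $W\subseteq\beta Y\setminus F_k$, proving $F_k$ closed. The argument for (i) is identical, working inside $Y$ with $h$ in place of $\widetilde h$ and using Lemma \ref{lemma_s=supp} to identify $s_\varphi$ with $\supp_\varphi$ on $Y$; closedness of $E_k$ in $Y$ also follows since $E_k=F_k\cap Y$.

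Property (iv) is immediate from the definitions: if $k\le r$ then $\tfrac1r\le\tfrac1k$, so any pair of distinct points separated by at least $\tfrac1k$ is a fortiori separated by at least $\tfrac1r$; thus the condition defining $E_k$ implies that defining $E_r$, giving $E_k\subseteq E_r$. For (iii), I would argue that each $y\in Y$ lies in some $E_k$. Since $s_\varphi(y)$ is finite (Lemma \ref{Lemma_supp}, via Lemma \ref{lemma_s=supp}) and each $s_{\varphi^{-1}}(x)$ is finite for $x\in s_\varphi(y)\subseteq\beta X$ — here I would use that $\varphi^{-1}$ is linear and continuous so its supports/$s$-sets on points of $Y$ are finite, though one must check finiteness for points of $\beta X$, not just $X$; the set $s_{\varphi^{-1}}(s_\varphi(y))$ is at any rate a compact subset of $bM$'s preimage. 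The image $\widetilde h(s_{\varphi^{-1}}(s_\varphi(y)))$ is therefore a compact subset of the metric space $bM$; if it is finite, its finitely many points have a positive minimal pairwise distance, so $y\in E_k$ for $k$ large. The point I expect to be the main obstacle is precisely this finiteness: one needs $s_{\varphi^{-1}}(s_\varphi(y))$ to be finite (or at least to have all pairwise distances of its $\widetilde h$-image bounded below), which for $y\in Y$ should follow because $s_\varphi(y)$ is a finite subset of $X$ (Lemma \ref{lemma_s=supp}), and for each such $x\in X$, $s_{\varphi^{-1}}(x)=\supp_{\varphi^{-1}}(x)$ is finite by Lemma \ref{Lemma_supp} applied to $\varphi^{-1}$; a finite union of finite sets is finite, closing the gap and yielding (iii).
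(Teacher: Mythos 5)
Your proof is correct and follows essentially the same route as the paper: the paper also proves closedness by showing the complement is open, pulling small disjoint balls back through $h$ (resp.\ $\widetilde h$) and then through the two lower semi-continuous support maps in succession (rather than invoking an explicit composition lemma, it unwinds the composition step by step), and its proofs of (iii) and (iv) are exactly the ones you give. One small nitpick that does not affect correctness: the composite $y\mapsto s_{\varphi^{-1}}(s_\varphi(y))$ need not be compact-valued (the image of a compact set under a lower semi-continuous compact-valued map can fail to be compact), but this is immaterial, since your argument --- like the paper's --- only uses openness of the sets $\{y: s_{\varphi^{-1}}(s_\varphi(y))\cap U\neq\emptyset\}$, which your displayed formula $(s_{\varphi^{-1}}\circ s_\varphi)^{-1}(U)=s_\varphi^{-1}\bigl(s_{\varphi^{-1}}^{-1}(U)\bigr)$ establishes.
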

\begin{proof}
Let $y\in Y\setminus E_k$. By Lemma \ref{lemma_s=supp},  $s_\varphi(y)=\supp_{\varphi}(y)$, so $y\notin E_k$ means that
there are distinct $a,b\in h(\supp_{\varphi^{-1}}(\supp_{\varphi}(y)))$ with $d(a,b)<\tfrac{1}{k}$. Let $\varepsilon >0$ be such that
\begin{equation}
  \varepsilon<\frac{1}{2k}-\frac{d(a,b)}{2} \quad \mbox{and} \quad \varepsilon<\frac{d(a,b)}{2}  \label{epsilon}
 \end{equation}
For $x\in \{a,b\}$, let $B_x$ be an $\varepsilon$-ball in the space $M$, centered at $x$.

The set $$V_x=h^{-1}(B_x)$$ is open in  $Y$
and
\begin{equation}\label{4}
 V_x\cap \supp_{\varphi^{-1}}(\supp_{\varphi}(y))\neq\emptyset,   
\end{equation}
for $x\in\{a,b\}$. Since the map $\supp_{\varphi^{-1}}$ is lower semi-continuous, the set $$W_x=\supp_{\varphi^{-1}}^{-1}(V_x)$$ is open in $X$ and, by \eqref{4},
$$\supp_{\varphi}(y)\cap W_x\neq \emptyset.$$
It follows that, for $x\in\{a,b\}$, the set
$$U_x=\supp_\varphi^{-1}(W_x)$$
is an open neighborhood of $y$ in $Y$. Put
$$U=U_a\cap U_b.$$

If $z\in U$ then $$V_x\cap\supp_{\varphi^{-1}}(\supp_{\varphi}(z))\neq\emptyset,$$
and hence, there is
$$\xi_x\in B_x\cap h(\supp_{\varphi^{-1}}(\supp_{\varphi}(z))),$$
for $x\in\{a,b\}$. By \eqref{epsilon} the balls $B_a$ and $B_b$ are disjoint, so $\xi_a\neq \xi_b$ and
$$d(\xi_a, \xi_b)<2\varepsilon+d(a,b)<\frac{1}{k},$$
by \eqref{epsilon}. This shows that $U\cap E_k=\emptyset$ and finishes the proof of (i). The proof of (ii) is analogous.

Assertion (iii) follows from the fact that for $y\in Y$ we have
$s_\varphi(y)=\supp_\varphi(y)\subseteq X$ (Lemma \ref{lemma_s=supp}) and thus the set $s_{\varphi^{-1}}(s_{\varphi}(y))=\supp_{\varphi^{-1}}(\supp_\varphi(y))$ is finite, by
Lemma \ref{Lemma_supp}.

Assertion (iv) is clear.
\end{proof}

Now, for $n\geq 1$, let
$$H_n=A_{n,n}\cap E_n$$
and let $\overline{H_n}$ be the closure of $H_n$ in $\beta Y$. The sets $H_n$ and $\overline{H_n}$ have the following properties:
\begin{obs}\label{H1H2}
 $\bigcup_{n=1}^\infty H_n=Y$ and $H_n\subseteq H_{n+1}$ for all $n\geq 1$.
\end{obs}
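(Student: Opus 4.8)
The plan is to deduce both assertions purely formally from the two ``filtration'' structures already established for the constituent families, namely the behaviour of $A_{n,n}$ described in Proposition \ref{proposition A_nn} and the behaviour of $E_n$ described in Lemma \ref{lemma-properties of E_k}. The key observation is that $H_n = A_{n,n}\cap E_n$ is an intersection of two increasing sequences of subsets of $Y$, each of which exhausts $Y$; such an intersection automatically inherits both monotonicity and the exhaustion property, so no genuinely new argument is needed.

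First I would verify monotonicity. Fix $n\geq 1$. Proposition \ref{proposition A_nn} gives $A_{n,n}\subseteq A_{n+1,n+1}$ (the case $m=n+1$ of the stated inclusion $A_{n,n}\subseteq A_{m,m}$ for $m\geq n$), while Lemma \ref{lemma-properties of E_k}(iv) gives $E_n\subseteq E_{n+1}$ (the case $k=n$, $r=n+1$). Intersecting these two inclusions yields
$$H_n = A_{n,n}\cap E_n \subseteq A_{n+1,n+1}\cap E_{n+1} = H_{n+1},$$
which is the second claim.

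Next I would handle the equality $\bigcup_{n=1}^\infty H_n = Y$. The inclusion $\subseteq$ is immediate, since $H_n = A_{n,n}\cap E_n\subseteq Y$. For $\supseteq$, take an arbitrary $y\in Y$. By Proposition \ref{proposition A_nn} we have $Y=\bigcup_{n}A_{n,n}$, so $y\in A_{p,p}$ for some $p$; by Lemma \ref{lemma-properties of E_k}(iii) we have $Y=\bigcup_k E_k$, so $y\in E_q$ for some $q$. Setting $N=\max\{p,q\}$ and invoking the two monotonicity facts from the previous paragraph, we obtain $y\in A_{N,N}$ and $y\in E_N$, hence $y\in H_N\subseteq\bigcup_n H_n$. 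Since $y\in Y$ was arbitrary, this gives $Y\subseteq\bigcup_n H_n$ and completes the proof.

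There is essentially no obstacle here: the whole content of the observation is the combinatorics of intersecting two increasing exhaustions, and every ingredient is already packaged in the cited Proposition and Lemma. The only point requiring a little care is to pass to a \emph{common} index $N$ in the union argument rather than using the (possibly distinct) indices witnessing membership in the two families separately, which is exactly where the monotonicity established first is used.
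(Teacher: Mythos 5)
Your proof is correct and follows exactly the route the paper takes: the paper's own proof simply cites Proposition \ref{proposition A_nn} and Lemma \ref{lemma-properties of E_k} (iii)--(iv), and your argument spells out the straightforward intersection-of-increasing-exhaustions details (including the passage to a common index $N$) that the paper leaves implicit.
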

\begin{proof}
 This follows immediately from Proposition \ref{proposition A_nn} and Lemma \ref{lemma-properties of E_k} (iii)--(iv).
\end{proof}

\begin{obs}\label{H3}
 If $y\in  \overline{H_n}\setminus H_n$, then $s_\varphi(y)\cap (\beta X\setminus X)\neq\emptyset$.
\end{obs}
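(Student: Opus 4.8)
The plan is to reduce Observation \ref{H3} to Proposition \ref{proposition-BmnAmn} by showing that any $y\in\overline{H_n}\setminus H_n$ in fact lies in $B_{n,n}\setminus A_{n,n}$. Since $H_n=A_{n,n}\cap E_n\subseteq A_{n,n}$, passing to closures in $\beta Y$ yields $\overline{H_n}\subseteq \overline{A_{n,n}}=B_{n,n}$, so every such $y$ automatically belongs to $B_{n,n}$. It therefore remains only to argue that $y\notin A_{n,n}$.

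First I would observe that $H_n$ is closed in $Y$. Indeed, $A_{n,n}=Y_n\cap C_{n,n}$ is the intersection of $Y_n$, which is closed in $Y$, with $C_{n,n}\cap Y$, where $C_{n,n}$ is closed in $\beta Y$; hence $A_{n,n}$ is closed in $Y$. Combining this with the fact that $E_n$ is closed in $Y$ (Lemma \ref{lemma-properties of E_k}(i)), we get that $H_n=A_{n,n}\cap E_n$ is closed in $Y$.

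Next, using the standard fact that for the subspace $Y\subseteq\beta Y$ one has $\mathrm{cl}_Y(H_n)=\overline{H_n}\cap Y$, the closedness of $H_n$ in $Y$ gives $\overline{H_n}\cap Y=H_n$. Consequently, any $y\in\overline{H_n}\setminus H_n$ must satisfy $y\in\beta Y\setminus Y$. Since $A_{n,n}\subseteq Y_n\subseteq Y$, this forces $y\notin A_{n,n}$, and together with $y\in B_{n,n}$ we conclude $y\in B_{n,n}\setminus A_{n,n}$. Applying Proposition \ref{proposition-BmnAmn} with $m=n$ then yields $s_\varphi(y)\cap(\beta X\setminus X)\neq\emptyset$, as desired.

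The argument is essentially bookkeeping about \emph{where} each set is closed, so I expect no serious difficulty. The one point demanding care is tracking the ambient space of each closedness claim — $Y_n$ and $E_n$ are closed in $Y$, whereas $C_{m,n}$ is closed in $\beta Y$ — so that the identity $\overline{H_n}\cap Y=H_n$ is invoked legitimately. This identity is exactly what pins $y$ down to the remainder $\beta Y\setminus Y$ and lets Proposition \ref{proposition-BmnAmn} do the real work.
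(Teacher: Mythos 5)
Your proposal is correct and is essentially the paper's own argument: the paper likewise notes that $H_n$ is closed in $Y$, deduces $\overline{H_n}\setminus H_n\subseteq B_{n,n}\setminus A_{n,n}$, and invokes Proposition \ref{proposition-BmnAmn}. You have merely filled in the routine details (closedness of $A_{n,n}$ and $E_n$ in $Y$, the identity $\overline{H_n}\cap Y=H_n$, and $A_{n,n}\subseteq Y$) that the paper leaves implicit.
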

\begin{proof}
Since $H_n$ is closed in $Y$,
we have
$\overline{H_n}\setminus H_n\subseteq B_{n,n}\setminus A_{n,n}$. So it is enough to apply Proposition \ref{proposition-BmnAmn}.
\end{proof}

\begin{obs}\label{H4}
 For every $y\in \overline{H_n}$ we have $y\in s_{\varphi^{-1}}(s_\varphi(y))$.
\end{obs}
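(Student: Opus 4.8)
The plan is to deduce this immediately from Proposition~\ref{proposition-y in s(s(y))}, which already establishes the desired membership for every point of $B_{m,n}$; all that remains is to place $\overline{H_n}$ inside such a set.

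First I would note that $H_n=A_{n,n}\cap E_n$ is contained in $A_{n,n}$, so taking closures in $\beta Y$ and using that $B_{n,n}$ is by definition the closure of $A_{n,n}$ in $\beta Y$, we obtain the inclusion
\[
\overline{H_n}\subseteq \overline{A_{n,n}}=B_{n,n}.
\]
Then, fixing an arbitrary $y\in\overline{H_n}\subseteq B_{n,n}$, I would invoke Proposition~\ref{proposition-y in s(s(y))} with $m=n$ (legitimate because $\varphi$ is assumed to be a linear homeomorphism throughout this section). The proposition produces a point $x\in s_{\varphi}(y)$ with $y\in s_{\varphi^{-1}}(x)$. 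Recalling the convention $s_{\varphi^{-1}}(s_\varphi(y))=\bigcup\{s_{\varphi^{-1}}(x):x\in s_\varphi(y)\}$ for the image of a compact set under the set-valued map $s_{\varphi^{-1}}$, this says precisely that $y\in s_{\varphi^{-1}}(s_\varphi(y))$, as required.

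I do not anticipate any genuine obstacle here. The substantive work --- namely, tracking a point of $\beta Y$ through the ``double support'' $s_{\varphi^{-1}}\circ s_\varphi$, which rests on Lemma~\ref{lemma_effective} and Lemma~\ref{s is in S_mn 1} --- has already been carried out inside Proposition~\ref{proposition-y in s(s(y))}. Consequently the present observation is a one-line corollary, relying only on the monotonicity of the closure operation and on the very definition of $B_{n,n}$ as $\overline{A_{n,n}}$. The single point that merits a moment's care is checking that $\overline{H_n}$ sits inside $B_{n,n}$ and not merely inside some $B_{m,n}$ with $m\neq n$; this is immediate because $H_n$ is built from $A_{n,n}$ (with matching indices), so one may apply the proposition with the diagonal value $m=n$.
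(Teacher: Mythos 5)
Your proof is correct and follows the same route as the paper: both establish $\overline{H_n}\subseteq B_{n,n}$ (the paper cites Lemma~\ref{lemma-properties of E_k} to get $\overline{H_n}\subseteq B_{n,n}\cap F_n$, of which only the $B_{n,n}$ part is needed here) and then apply Proposition~\ref{proposition-y in s(s(y))}. Your handling of the set-valued image convention $s_{\varphi^{-1}}(s_\varphi(y))=\bigcup\{s_{\varphi^{-1}}(x):x\in s_\varphi(y)\}$ is exactly right, so nothing is missing.
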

\begin{proof}
According to Lemma \ref{lemma-properties of E_k}, we have
$$\overline{H_n}=\overline{A_{n,n}\cap E_n}\subseteq B_{n,n}\cap F_n.$$
Hence, our assertion follows from Proposition \ref{proposition-y in s(s(y))}.
\end{proof}
\begin{obs}\label{H5}
For every $y\in \overline{H_n}$ and for all distinct $a,b\in \widetilde{h}(s_{\varphi^{-1}}(s_{\varphi}(y)))$ we have $d(a,b)\geq \tfrac{1}{n}$.
\end{obs}
\begin{proof}
 Again, by Lemma \ref{lemma-properties of E_k} we have  $\overline{H_n}\subseteq B_{n,n}\cap F_n.$ So the assertion follows from the definition of the set $F_n$.
\end{proof}

For each $n\geq 1$, we define a set-valued mapping $e_n:\overline{H_n}\to \mathcal{K}(\beta X)$ by the formula
$$e_n(y)=\{x\in s_{\varphi}(y):\widetilde{h}(y)\in \widetilde{h}(s_{\varphi^{-1}}(x))\}.$$
Note that the set $e_n(y)$ is finite because the set $s_\varphi(y)$ is finite for $y\in \overline{H_n}$. Also, $e_n(y)$ is nonempty, by Observation \ref{H4}. So the map
$e_n$ is well defined.
\begin{lemma}\label{lemma e_n is lsc}
For every $n\geq 1$, the map $e_n: \overline{H_n}\to \mathcal{K}(\beta X)$ is lower semi-continuous.
\end{lemma}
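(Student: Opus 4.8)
The plan is to verify lower semi-continuity straight from the definition: fix an arbitrary open set $U\subseteq\beta X$ and show that $e_n^{-1}(U)=\{y\in\overline{H_n}:e_n(y)\cap U\neq\emptyset\}$ is open in $\overline{H_n}$. The ingredients I intend to combine are the lower semi-continuity of $s_\varphi$ (Proposition \ref{s is lsc}), the lower semi-continuity of $s_{\varphi^{-1}}:\beta X\to\mathcal{K}(\beta Y)$ (the same proposition applied to the homeomorphism $\varphi^{-1}$), the continuity of $\widetilde{h}$, and, decisively, the uniform separation provided by Observation \ref{H5}. So first I would fix $y_0\in e_n^{-1}(U)$ together with a witness $x_0\in e_n(y_0)\cap U$. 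By the very definition of $e_n$ this gives $x_0\in s_\varphi(y_0)$ and $\widetilde{h}(y_0)\in\widetilde{h}(s_{\varphi^{-1}}(x_0))$, so I can pick $y_0'\in s_{\varphi^{-1}}(x_0)$ with $\widetilde{h}(y_0')=\widetilde{h}(y_0)=:a_0$.

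Next I would manufacture a neighbourhood of $y_0$ contained in $e_n^{-1}(U)$ by three successive uses of lower semi-continuity, calibrated to the gap $\tfrac1n$. Choose $\delta<\tfrac{1}{2n}$ and let $B$ be the open $\delta$-ball in $bM$ centred at $a_0$, so that $R:=\widetilde{h}^{-1}(B)$ is an open neighbourhood of both $y_0$ and $y_0'$ in $\beta Y$. Since $y_0'\in s_{\varphi^{-1}}(x_0)\cap R$, lower semi-continuity of $s_{\varphi^{-1}}$ makes $P:=\{x\in\beta X:s_{\varphi^{-1}}(x)\cap R\neq\emptyset\}$ an open neighbourhood of $x_0$, whence $U\cap P$ is open and contains $x_0$. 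Since $x_0\in s_\varphi(y_0)\cap(U\cap P)$, lower semi-continuity of $s_\varphi$ makes $Q:=\{y\in\beta Y:s_\varphi(y)\cap(U\cap P)\neq\emptyset\}$ an open neighbourhood of $y_0$. I would then take $W=Q\cap R\cap\overline{H_n}$, an open neighbourhood of $y_0$ in $\overline{H_n}$, and aim to prove $W\subseteq e_n^{-1}(U)$.

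For the verification, fix $y\in W$. From $y\in Q$ I extract $x\in s_\varphi(y)\cap U\cap P$, and from $x\in P$ a point $y'\in s_{\varphi^{-1}}(x)$ with $\widetilde{h}(y')\in B$, so $d(\widetilde{h}(y'),a_0)<\delta$; from $y\in R$ I also have $d(\widetilde{h}(y),a_0)<\delta$, and hence $d(\widetilde{h}(y),\widetilde{h}(y'))<2\delta<\tfrac1n$. The point is that both values already lie in the finite set $\widetilde{h}(s_{\varphi^{-1}}(s_\varphi(y)))$: indeed $\widetilde{h}(y')\in\widetilde{h}(s_{\varphi^{-1}}(x))$ because $x\in s_\varphi(y)$ and $y'\in s_{\varphi^{-1}}(x)$, while $\widetilde{h}(y)\in\widetilde{h}(s_{\varphi^{-1}}(s_\varphi(y)))$ follows from Observation \ref{H4}, which furnishes some $x''\in s_\varphi(y)$ with $y\in s_{\varphi^{-1}}(x'')$. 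As $y\in\overline{H_n}$, Observation \ref{H5} forbids two \emph{distinct} elements of $\widetilde{h}(s_{\varphi^{-1}}(s_\varphi(y)))$ from lying closer than $\tfrac1n$; since $d(\widetilde{h}(y),\widetilde{h}(y'))<\tfrac1n$, I am forced to conclude $\widetilde{h}(y)=\widetilde{h}(y')$. Then $\widetilde{h}(y)\in\widetilde{h}(s_{\varphi^{-1}}(x))$ with $x\in s_\varphi(y)\cap U$, i.e.\ $x\in e_n(y)\cap U$, which establishes $W\subseteq e_n^{-1}(U)$ and completes the argument.

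The only genuine obstacle is the final step: ordinary continuity of $\widetilde{h}$ together with lower semi-continuity of the support maps yields only the two approximate estimates $d(\widetilde{h}(y),a_0)<\delta$ and $d(\widetilde{h}(y'),a_0)<\delta$, and nothing in this soft machinery upgrades "$\delta$-close" to the exact identity $\widetilde{h}(y)=\widetilde{h}(y')$ that the definition of $e_n$ demands. It is precisely the uniform gap of Observation \ref{H5} that collapses proximity into equality, and this is why the restriction to $\overline{H_n}$ (rather than all of $\beta Y$) is essential for $e_n$ to be lower semi-continuous.
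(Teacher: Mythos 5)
Your proposal is correct and follows essentially the same argument as the paper's proof: a ball $B$ of radius calibrated to $\tfrac{1}{2n}$ around $\widetilde{h}(y_0)$, two applications of lower semi-continuity (of $s_{\varphi^{-1}}$ and then $s_\varphi$) to build the neighbourhood, and Observation \ref{H5} combined with Observation \ref{H4} to upgrade $2\delta$-proximity of $\widetilde{h}(y)$ and $\widetilde{h}(y')$ to equality. The only differences are cosmetic (you use radius $\delta<\tfrac{1}{2n}$ where the paper uses exactly $\tfrac{1}{2n}$, and you name the intermediate open sets $P$, $Q$, $R$ separately where the paper folds them into $V$ and $W$).
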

\begin{proof}
Take an open set $U\subseteq \beta X$. Pick $y\in e_n^{-1}(U)$ and take $x_0\in e_n(y)\cap U$ witnessing $e_n(y)\cap U\neq \emptyset$.

We need to show that there is an open set $W$ in $\beta Y$ with
$$y\in W\cap \overline{H_n}\subseteq e_n^{-1}(U).$$

Denote by $B$ the ball in $bM$ of radius $\tfrac{1}{2n}$
centered at $\widetilde{h}(y)$ .
Since the map $s_{\varphi^{-1}}:\beta X\to \mathcal{K}(\beta Y)$ is lower semi-continuous (see Proposition \ref{s is lsc}), the set
$$V=\{x\in \beta X: s_{\varphi^{-1}}(x)\cap \widetilde{h}^{-1}(B)\neq \emptyset\}\cap U$$
is open in $\beta X$ and $x_0\in V$.

We set

$$W=\{z\in \beta Y:s_\varphi(z)\cap V\neq\emptyset\}\cap \widetilde{h}^{-1}(B).$$

Note that $W$ is open in $\beta Y$ (by Proposition \ref{s is lsc}) and $y\in W$ because $x_0\in V\cap e_n(y)\subseteq V\cap s_\varphi(y)$.

We claim that $W$ is as required. Indeed, pick $z\in W\cap \overline{H_n}$. We have
\begin{align}
    &\widetilde{h}(z)\in B \quad\quad \mbox{ and}  \label{*} \\
    &s_\varphi(z)\cap V\neq \emptyset \label{**}
\end{align}

%\widetilde{h}(z)\in B   \label{*}
%\end{equation}
%and 
%\begin{equation}
%    s_\varphi(z)\cap V\neq \emptyset \label{**}
%\end{equation}
Let $x_1$ be a witness for \eqref{**}, i.e.,
\begin{align}
&x_1\in s_\varphi(z)\cap U  \quad\quad \mbox{ and} \label{+} \\
&s_{\varphi^{-1}}(x_1)\cap \widetilde{h}^{-1}(B)\neq \emptyset.\label{++}
\end{align}

%\begin{equation}
%s_{\varphi^{-1}}(x_1)\cap \widetilde{h}^{-1}(B)\neq \emptyset.\label{++}    
%\end{equation}
By \eqref{++}, there is $z'\in s_{\varphi^{-1}}(x_1)$ such that
\begin{equation}
\widetilde{h}(z')\in B.  \label{+++}    
\end{equation}
On the other hand, since $z\in \overline{H_n}$, we infer from Observation \ref{H4}, that there is $x_2\in s_\varphi(z)$ (possibly $x_2=x_1$) such that
$z\in s_{\varphi^{-1}}(x_2)$. By \eqref{*} and \eqref{+++}  we must have
\begin{equation}
\widetilde{h}(z)=\widetilde{h}(z'). \label{++++}
\end{equation}
For otherwise
$a=\widetilde{h}(z)$ and  $b=\widetilde{h}(z')$ would be distinct elements of $\widetilde{h}(s_{\varphi^{-1}}(s_\varphi(z))$ satisfying
$$d(a,b)\leq d\left(a,\widetilde{h}(y)\right)+d\left(\widetilde{h}(y),b\right)<1/2n+1/2n=1/n,$$
by definition of $B$. However, this would contradict $z\in \overline{H_n}$, by Observation \ref{H5}.

Now, \eqref{++++} gives $\widetilde{h}(z)\in\widetilde{h}(s_{\varphi^{-1}}(x_1))$. But this means that $x_1\in e_n(z)$ and thus by \eqref{+},
$x_1\in e_n(z)\cap U.$ In particular the latter set is nonempty.
\end{proof}

\begin{rem}
Clearly the sets $H_n$ and $\overline{H_n}$ and the map $e_n$ depend on the function $h:Y\to M$. In what follows we will
always be given a function $h:Y\to M$. The sets $H_n$, $\overline{H_n}$ and the map $e_n$ will be associated to the given function $h$.
\end{rem}

It will be convenient to use the following notation. For a continuous map $f:S\to T$ between topological spaces $S$ and $T$ and a set $A\subseteq S$ we denote by $f^\#(A)$ the set
$T\setminus f(S\setminus A)$. It is straightforward to verify the following:

\begin{prop}\label{proposition hash}
Suppose that $f:S\to T$ is a continuous map between topological spaces $S$ and $T$. Then:
\begin{enumerate}[(a)]
\item If $S$ is compact and $U\subseteq S$ is open, then $f^\#(U)$ is open in $T$.
\item For any $t\in T$ and $A\subseteq S$, if $t\in f^\#(A)$, then $f^{-1}(t)\subseteq A$.
\item For any $A \subseteq S$ and $B\subseteq T$, if $f^{-1}(B)\subseteq A$, then $B\subseteq f^\#(A)$
\end{enumerate}
\end{prop}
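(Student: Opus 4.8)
The plan is to verify each of the three claims directly from the definition $f^\#(A)=T\setminus f(S\setminus A)$, unwinding the set-theoretic content; none requires machinery beyond continuity, and part (a) additionally uses compactness together with the standing assumption that all spaces are Tychonoff (hence Hausdorff).

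For part (a), I would observe that if $U$ is open, then $S\setminus U$ is closed in the compact space $S$, hence compact. Continuity of $f$ makes $f(S\setminus U)$ a compact subset of $T$; since $T$ is Hausdorff, this image is closed, so its complement $f^\#(U)=T\setminus f(S\setminus U)$ is open. For part (b), I would simply translate membership: $t\in f^\#(A)$ means exactly $t\notin f(S\setminus A)$, i.e.\ no point of $S\setminus A$ maps to $t$; equivalently, every point of $f^{-1}(t)$ lies in $A$, which is the assertion $f^{-1}(t)\subseteq A$.

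For part (c), I would argue on a single point by contradiction. Fix $t\in B$ and suppose $t\notin f^\#(A)$, that is, $t\in f(S\setminus A)$; choose $s\in S\setminus A$ with $f(s)=t$. Then $s\in f^{-1}(t)\subseteq f^{-1}(B)\subseteq A$, contradicting $s\notin A$. Hence $t\in f^\#(A)$, and since $t\in B$ was arbitrary, $B\subseteq f^\#(A)$.

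The only step with any content is the appeal to Hausdorffness in part (a), where one passes from \emph{compact image} to \emph{closed image}; parts (b) and (c) are pure definition-chasing, so I expect no genuine obstacle.
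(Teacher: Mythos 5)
Your proof is correct; the paper in fact states this proposition without proof (prefacing it only with ``It is straightforward to verify the following''), and your argument is exactly the intended routine verification. You also rightly flag the one point of substance --- that part (a) needs $T$ Hausdorff so that the compact image $f(S\setminus U)$ is closed --- which is covered by the paper's standing assumption that all spaces are Tychonoff.
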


We are ready now to present proofs of the results announced in the Introduction.

\begin{proof}[Proof of Theorem \ref{thrm proj Hurewicz}]
By symmetry, it is enough to show that the projective Hurewicz property of $X$ implies the projective Hurewicz property of $Y$. Suppose that $X$ is projectively
Hurewicz and
fix a continuous surjection
$h:Y\to M$ that maps $Y$ onto a separable metrizable space $M$.
Let $bM$ be a metrizable compactification of $M$ and let $\widetilde{h}:\beta Y\to bM$ be a continuous extension of $h$.
Denote by $d$ a metric on $bM$ that generates the topology of $bM$. Note that

\begin{equation}
    \mbox{If }A\subseteq bM\setminus M, \mbox{ then }  \widetilde{h}^{-1}(A)\subseteq \beta Y\setminus Y. \label{9'}
\end{equation}

In order to prove that $M$ is Hurewicz, we will employ 
Theorem \ref{Hurewicz characterization}. For this purpose, take a $\sigma$-compact set $F\subseteq bM\setminus M$.
Write $F=\bigcup_{i=1}^\infty K_i$, where each $K_i$ is compact and $K_i\subseteq K_{i+1}$.
We need to show that there is a $G_\delta$-subset $G$ of $bM$ with $F\subseteq G\subseteq bM\setminus M$.

If no $K_i$ intersects $\bigcup_{n=1}^\infty \widetilde{h}(\overline{H_n})$, then we are done because the complement of the latter union in $bM$ is a
$G_\delta$-subset of $bM\setminus M$ (by Observation \ref{H1H2} and surjectivity of $h:Y\to M$). So suppose that, for some $i$, the set $K_i$ meets
$\bigcup_{n=1}^\infty \widetilde{h}(\overline{H_n})$ and let $i_0$ be the first such $i$.
Since the family $\{K_i:i=1,2,\ldots\}$ is increasing, $K_i$ meets $\bigcup_{n=1}^\infty \widetilde{h}(\overline{H_n})$, for every $i\geq i_0$.
In order to find the required $G_\delta$-set $G$ it suffices to find such set for the family $\{K_i:i\geq i_0\}$, i.e., it is
enough to find a $G_\delta $-subset $G'$ of $bM$ such that $\bigcup_{i=i_0}^\infty K_i\subseteq G'\subseteq bM\setminus M$.
This is because the set $\bigcup_{i=1}^{i_0-1} K_i$ is contained in a $G_\delta$-subset of $bM\setminus M$
(the complement of $\bigcup_{n=1}^\infty \widetilde{h}(\overline{H_n})$ in $bM$) and the union of two $G_\delta$-sets is $G_\delta$.

For each $i\geq i_0$ there is a positive integer $n_i$ such that the compact set
$$K'_{n_i}=\widetilde{h}^{-1}(K_i)\cap \overline{H_{n_i}}$$
is nonempty. By Observation \ref{H1H2} we can additionally require that $n_{i_0}<n_{i_0+1}<\dots$.

Let $i\geq i_0$. Since $bM$ is metrizable, the set $K_i$ is $G_\delta$ in $bM$.
It follows from Proposition \ref{s is lsc} and Lemma \ref{G_delta via lsc map} that the set
$$G_i=s^{-1}_{\varphi^{-1}}\left( \widetilde{h}^{-1}(K_i)\right)=\{x\in \beta X:s_{\varphi^{-1}}(x)\cap\widetilde{h}^{-1}(K_i)\neq\emptyset\}$$
is a $G_\delta$-set in $\beta X$. In addition, by \eqref{9'}, Lemma \ref{lemma_s=supp} and
Lemma \ref{Lemma_supp}, we have $G_i\subseteq \beta X\setminus X$.

The map $e_{n_i}$ restricted to $K'_{n_i}$ is lower semi-continuous (by Lemma \ref{lemma e_n is lsc}) and note that if $y\in K'_{n_i}$
then $e_{n_i}(y)\subseteq G_i\subseteq \beta X\setminus X$ (by definition of $e_{n_i}$).
Thus, we may consider the map $e_{n_i}\upharpoonright K'_{n_i}$ as a (lower semi-continuous) map into $\mathcal{K}(G_i)$.
By Theorem \ref{theorem_Bouziad}, this map admits a compact section, i.e., there is a compact set $L_i\subseteq G_i$ such that
$$L_i\cap e_{n_i}(y)\neq \emptyset,\mbox{ for every } y\in K'_{n_i}.$$
In particular, since $e_{n_i}(y)\subseteq s_\varphi(y)$, we have

\begin{equation}
L_i\cap s_{\varphi}(y)\neq \emptyset,\mbox{ for every } y\in K'_{n_i}.\label{10'}
\end{equation}

Using Lemma \ref{upgrading compact to zero}, we can enlarge the set $L_i$ to a zero-set in $\beta X$ contained in $G_i$.
Clearly, this is still a section of $e_{n_i}$, so without loss
of generality we can assume that each $L_i$ is a zero-set.

The space $X$ is projectively Hurewicz and $L=\bigcup_{i=i_0}^\infty L_i\subseteq \beta X\setminus X$, where all $L_i$'s are zero-sets in $\beta X$.
Hence, by Proposition \ref{proposition proj Hurewicz},
there is a $G_\delta$-set $P$ in $\beta X$ with
\begin{equation}
L\subseteq P\subseteq \beta X\setminus X. \label{14''}
\end{equation}
We can write
$$P=\bigcap_{i=i_0}^{\infty} P_i,$$
where the sets $P_i$ are open in $\beta X$ and form a
decreasing sequence, i.e., $P_i\supseteq P_{i+1}$.

For $i\geq i_0$, we infer from the lower-semi-continuity of the map $s_\varphi$, that the set
$$V_i=s_\varphi^{-1}(P_i)=\{y\in \beta Y:s_\varphi(y)\cap P_i\neq\emptyset\},$$
is open in $\beta Y$ and $V_i\supseteq V_{i+1}$.
For each $i\geq i_0$, we set
$$W_i=V_i\cup (\beta Y\setminus \overline{H_{n_i}}).$$
Clearly, $W_i$ is open in $\beta Y$ and $W_i\supseteq W_j$ for $i\leq j$ (because $V_i\supseteq V_j$ and $H_{n_i}\subseteq H_{n_j}$).
Moreover, by \eqref{10'} and \eqref{14''}, we have
$\widetilde{h}^{-1}(K_i)\subseteq W_i$, for every $i\geq i_0$. Fix an arbitrary $i\geq i_0$. If $j\geq i$, then $K_i\subseteq K_j$, so
$\widetilde{h}^{-1}(K_i)\subseteq \widetilde{h}^{-1}(K_j)\subseteq W_j$. If $i_0 \leq j<i$, then $\widetilde{h}^{-1}(K_i)\subseteq W_i\subseteq W_j$. Therefore for every $i\geq i_0$ we have
\begin{equation}
   \widetilde{h}^{-1}(K_i)\subseteq \bigcap_{j=i_0}^\infty W_j.  \label{11'}
\end{equation}

We claim that the
set $G'=\bigcap_{i=i_0}^\infty \widetilde{h}^\#(W_i)$ is the $G_\delta$-set we are looking for.
First, note that $G'$ is indeed a $G_\delta$-set in $bM$, by Proposition \ref{proposition hash} (a).
From \eqref{11'} and Proposition \ref{proposition hash} (c) we get
$$\bigcup_{i=i_0}^\infty K_i\subseteq \bigcap_{i=i_0}^\infty \widetilde{h}^\#(W_i).$$
It remains to show that $\bigcap_{i=i_0}^\infty \widetilde{h}^\#(W_i)\subseteq bM\setminus M$.
Suppose that this is not the case and fix $a\in M\cap\bigcap_{i=i_0}^\infty \widetilde{h}^\#(W_i)$.
Since the map $h:Y\to M$ is surjective, there is $y\in Y$ such that $h(y)=\widetilde{h}(y)=a$. By Proposition \ref{proposition hash} (b),
$\widetilde{h}^{-1}(a)\subseteq \bigcap_{i=i_0}^\infty W_i$.
Thus
$$y\in Y\cap \bigcap_{i=i_0}^\infty W_i=Y\cap \bigcap_{i=i_0}^\infty\left(V_i\cup (\beta Y\setminus \overline{H_{n_i}})\right).$$
On the other hand, it follows from Observation \ref{H1H2} that $y\in H_{n_i}$ for all but finitely many $i$'s. Hence we must have
$$y\in V_i=s_\varphi^{-1}(P_i),\mbox{ for all but finitely many }i\mbox{'s}.$$
In addition,
$y\in Y$ so the set $s_\varphi(y)=\supp_\varphi(y)$ is a finite subset of $X$ (cf. Lemmata \ref{s is lsc} and \ref{Lemma_supp}).
Therefore, there must be $x\in \supp_\varphi(y)\subseteq X$ such that the set $\{i:x\in P_i\}$ is infinite.
Since $P_{i_0}\supseteq P_{i_0+1}\supseteq\ldots$, we get $x\in X\cap P$, which contradicts \eqref{14''}.
\end{proof}

Let us remark that Theorem \ref{theorem Hurewicz} follows immediately from Theorem \ref{thrm proj Hurewicz}, Theorem \ref{theorem Velichko} and
\cite[Theorem 3.2]{Ko} (cf. \cite[Proposition 31]{BCM}).

Now, we present a proof of Theorem \ref{thrm proj Menger}. Conceptually the proof is virtually the same as the previous one.
It is more technical though. This is because in place of Theorem \ref{Hurewicz characterization} we need to use Theorem \ref{thrm_Telgarsky},
i.e., instead of dealing with $\sigma$-compact subsets of the remainder $bM\setminus M$  we need to work with strategies in the game
$kP(bM,bM\setminus M)$ which is a more complicated task.

\begin{proof}[Proof of Theorem \ref{thrm proj Menger}]
By symmetry, it is enough to show that the projective Menger property of $X$ implies that $Y$ is projectively Menger. To this end,
suppose that $X$ is projectively Menger and
let us
fix a continuous surjection
$h:Y\to M$ that maps $Y$ onto a separable metrizable space $M$. Let $bM$ be a metrizable compactification of $M$ and let
$\widetilde{h}:\beta Y\to bM$ be a continuous extension of $h$. Denote by $d$ a metric on $bM$ that generates the topology of $bM$. Note that

\begin{equation}
    \mbox{If }A\subseteq bM\setminus M, \mbox{ then }  \widetilde{h}^{-1}(A)\subseteq \beta Y\setminus Y. \label{9}
\end{equation}

In order to prove that $M$ is Menger, we will employ 
Theorem \ref{thrm_Telgarsky}. For this purpose, suppose that $\sigma$ is a strategy for player I in the $k$-Porada game $kP(bM,bM\setminus M)$.
We need to show that the strategy $\sigma$ is not winning.  Since $h$ is surjective, we have
$M\subseteq \bigcup_{n=1}^\infty\widetilde{h}(\overline{H_n})$, by Observation \ref{H1H2}.
So applying Proposition \ref{proposition modify strategy}, we may without loss of generality assume, that
\begin{equation}
\mbox{every compact set played according to }\sigma \mbox{ meets }  \bigcup_{n=1}^\infty\widetilde{h}(\overline{H_n}).  \label{11}
\end{equation}

Using $\sigma$, we will recursively define a strategy $\tau$ for player I in the $z$-Porada game $zP(\beta X,\beta X\setminus X)$
(cf. Proposition \ref{proposition proj Menger}).
In addition, with each open set $V_i$ played by player II in his $(i+1)$st move in the game
$zP(\beta X,\beta X\setminus X)$ (where the strategy $\tau$ is applied by player I),
we will associate a set $V'_i$ played by player II in his $(i+1)$st move in $kP(bM,bM\setminus M)$ (where the strategy $\sigma$ is applied by player I).

Let $(K_0,U_0)=\sigma(\emptyset)$ be the first move played by I according to $\sigma$. By \eqref{11}, there exists $n_0$ such that the compact set

$$K'_{n_0}=\widetilde{h}^{-1}(K_0)\cap \overline{H_{n_0}}$$
is nonempty.
Since $bM$ is metrizable, the set $K_0$ being compact, is $G_\delta$ in $bM$.
It follows from Proposition \ref{s is lsc} and Lemma \ref{G_delta via lsc map} that the set
$$G_0=s^{-1}_{\varphi^{-1}}\left(\widetilde{h}^{-1}(K_0)\right)=\{x \in \beta X: s_{\varphi^{-1}}(x)\cap \widetilde{h}^{-1}(K_0)\neq \emptyset\}$$
is a $G_\delta$-set in $\beta X$. In addition, by \eqref{9}, Lemma \ref{lemma_s=supp} and
Lemma \ref{Lemma_supp}, we have $G_0\subseteq \beta X\setminus X$.

The map $e_{n_0}$ restricted to $K'_{n_0}$ is lower semi-continuous (by Lemma \ref{lemma e_n is lsc}) and note that if $y\in K'_{n_0}$ then
$e_{n_0}(y)\subseteq G_0\subseteq \beta X\setminus X$ (by definition of $e_{n_0}$). Thus, we may consider the map
$e_{n_0}\upharpoonright K'_{n_0}$ as a (lower semi-continuous) map into $\mathcal{K}(G_0)$. By Theorem \ref{theorem_Bouziad},
this map admits a compact section, i.e., there is a compact set $L_0\subseteq G_0$ such that
$$L_0\cap e_{n_0}(y)\neq \emptyset,\mbox{ for every } y\in K'_{n_0}.$$
In particular, since $e_{n_0}(y)\subseteq s_\varphi(y)$, we have
\begin{equation}
L_0\cap s_{\varphi}(y)\neq \emptyset,\mbox{ for every } y\in K'_{n_0}.\label{10}
\end{equation} 

Using Lemma \ref{upgrading compact to zero}, we can enlarge the set $L_0$ to a zero-set in $\beta X$ contained in $G_0$.
Clearly, this is still a section of $e_{n_0}$, so without loss
of generality we can assume that each $L_0$ is a zero-set in $\beta X$.

%We set $U_0=\beta X$ and
We define
$$\tau(\emptyset)=(L_0,\beta X).$$

Let $V_0$ be the first move of player II in $zP(\beta X,\beta X\setminus X)$,
i.e., $V_0$ is an arbitrary open set in $\beta X$ containing
$L_0$.
Consider the following subset $W_0$ of $bM$:
$$W_0=\widetilde{h}^\#\left(s_{\varphi}^{-1}(V_0)\cup (\beta Y\setminus \overline{H_{n_0}})\right).$$
Since $s_\varphi$ is lower semi-continuous, it follows from Proposition \ref{proposition hash} (a), that $W_0$ is open in $bM$.
Moreover, since $L_0\subseteq V_0$, we infer from \eqref{10} that
$K'_{n_0}=\widetilde{h}^{-1}(K_0)\cap \overline{H_{n_0}}\subseteq s_{\varphi}^{-1}(V_0)$ and thus
$\widetilde{h}^{-1}(K_0)\subseteq s_{\varphi}^{-1}(V_0)\cup (\beta Y\setminus \overline{H_{n_0}})$.
Hence, by Proposition \ref{proposition hash} (c) we get $K_0\subseteq W_0$. Define
$$V'_0=W_0\cap U_0.$$
Clearly, $K_0\subseteq V'_0\subseteq U_0$, so $V'_0$ is a legal move of player II in $kP(bM,bM\setminus M)$. Let
$(K_1,U_1)=\sigma(V'_0)$ be the response of player I, consistent with her strategy. By \eqref{11} and Observation \ref{H1H2}, there is $n_1>n_0$ such that the compact set
$$K'_{n_1}=\widetilde{h}^{-1}(K_1)\cap \overline{H_{n_1}}.$$ 
is nonempty. Arguing as before, we note that the set
$$G_1=s^{-1}_{\varphi^{-1}}\left(\widetilde{h}^{-1}(K_1)\right)$$
is $G_\delta$ in $\beta X$ and $G_1\subseteq \beta X\setminus X$. Again, since $e_{n_1}(y)\subseteq G_1$, for $y\in K'_{n_1}$,
we infer that the map $e_{n_1}\upharpoonright K'_{n_1}$ (i.e., $e_{n_1}$ restricted to $K_{n_1}$) maps the compact set $K'_{n_1}$
lower semi-continuously into $\mathcal{K}(G_1)$. By Theorem \ref{theorem_Bouziad} this map admits a compact section $L_1$. Again, using
Lemma \ref{upgrading compact to zero}, we can assume that $L_1$ is a zero-set in $\beta X$.

We define $$\tau(V_0)=(L_1,V_0).$$

Let $V_1$ be an arbitrary open set in $\beta X$ satisfying $L_1\subseteq V_1\subseteq V_0$ (the next move of player II in
$zP(\beta X, \beta X\setminus X)$). We set
$$W_1=\widetilde{h}^\#\left(s_{\varphi}^{-1}(V_1)\cup (\beta Y\setminus \overline{H_{n_1}})\right).$$
The lower semi-continuity of $s_{\varphi}$ and Proposition \ref{proposition hash} (a) imply that $W_0$ is open in $bM$.
Arguing as before, we get that $K_1\subseteq W_1$. Let $V'_1$ be an open set in $bM$ satisfying
$$K_1\subseteq V'_1 \subseteq \overline{V'_1}\subseteq W_1\cap U_1.$$

We continue our construction following this pattern. In this way we define a strategy $\tau$ for player I in the game
$zP(\beta X,\beta X\setminus X)$. Moreover, a play
$$\tau(\emptyset),\; V_0,\;\tau(V_0),\;V_1,\; \tau(V_0,V_1),\ldots$$
in $zP(\beta X,\beta X\setminus X)$ generates the play
$$\sigma(\emptyset),\; V'_0,\;\sigma(V'_0),\;V'_1,\; \sigma(V'_0,V'_1),\ldots$$
in $kP(bM,bM\setminus M)$, where
\begin{align}
    &V'_k \subseteq \widetilde{h}^\#\left(s_{\varphi}^{-1}(V_k)\cup (\beta Y\setminus \overline{H_{n_k}}\right) \mbox{ and} \label{12}\\
    &\overline{V'_{k+1}}\subseteq V'_{k} \label{13}
\end{align}
The numbers $n_0<n_1<\ldots <n_k<\ldots$ form an increasing sequence.

By our assumption, the space $X$ is projectively Menger, hence by Proposition \ref{proposition proj Menger}, there is a play
$$\tau(\emptyset),\; V_0,\;\tau(V_0),\;V_1,\; \tau(V_0,V_1),\ldots$$
in which player I applies her strategy $\tau$ and fails, i.e
\begin{equation}
 \emptyset\neq\bigcap_{k=0}^\infty V_k\subseteq \beta X\setminus X. \label{14}
\end{equation}
The above play generates the play
$$\sigma(\emptyset),\; V'_0,\;\sigma(V'_0),\;V'_1,\; \sigma(V'_0,V'_1),\ldots$$
in $kP(bM,bM\setminus M)$. We claim that player II wins this run of the game and thus $\sigma$ is not winning for player I.

Indeed, otherwise $M\cap\bigcap_{k=0}^\infty{V'_k}\neq \emptyset$ (note that \eqref{13} guarantees that the intersection of the family
$\{V'_k:k=0,1,\ldots\}$ is nonempty by compactness).
Fix $a\in M\cap\bigcap_{k=0}^\infty{V'_k}$. Since $h:Y\to M$ is surjective there is $y\in Y$ with $h(y)=a$.
Applying \eqref{12} and Proposition \ref{proposition hash} (b), we get
$$\widetilde{h}^{-1}(a)\subseteq s_{\varphi}^{-1}(V_k)\cup \left(\beta Y\setminus \overline{H_{n_k}}\right),$$
for every $k$.
On the other hand, it follows from Observation \ref{H1H2} that $y\in H_{n_k}$ for all but finitely many $k$'s. Hence, we must have
$$y\in s_{\varphi}^{-1}(V_k),\mbox{ for all but finitely many }k\mbox{'s}.$$
In addition,
$y\in Y$ so the set $s_\varphi(y)=\supp_\varphi(y)$ is a finite subset of $X$ (cf. Lemmata \ref{s is lsc} and \ref{Lemma_supp}).
Therefore, there must be $x\in \supp_\varphi(y)\subseteq X$ such that the set $\{k:x\in V_k\}$ is infinite.
Since $V_0\supseteq V_1\supseteq\ldots$, we get $x\in X\cap\bigcap_{k=0}^\infty V_k$, which contradicts \eqref{14}
\end{proof} 

\begin{proof}[Proof of Theorem \ref{theorem Menger}]
By symmetry, it is enough to show that the Menger property of $X$ implies the Menger property of $Y$. Suppose that $X$ is Menger.
Then $X$ is Lindel\"of, so by Velichko's Theorem \ref{theorem Velichko}, the space $Y$ is Lindel\"of too. Moreover, since $X$ is Menger it is projectively
Menger (cf. Proposition \ref{proposition proj_Menger}), so according to Theorem \ref{thrm proj Menger}, the space $Y$ is projectively Menger. The result follows now
from Proposition \ref{proposition proj_Menger}. 
\end{proof}

\section*{Acknowledgements}
The author was partially supported by the NCN (National Science Centre, Poland) research Grant no. 2020/37/B/ST1/02613
\bibliographystyle{siam}
\bibliography{bib.bib}
\end{document}